\documentclass{amsart}

\usepackage{amsfonts,amsmath,amssymb,amsthm,stmaryrd}
\usepackage{geometry}
\usepackage{graphicx,xcolor}
\usepackage[hidelinks]{hyperref}
\usepackage{tikz} \usetikzlibrary{cd}
\usepackage[all]{xy}
\usepackage{dutchcal}
\usepackage{bm}

\usepackage[backend=biber,style=alphabetic,sorting=nyt,giveninits=true,isbn=false,doi=false,url=false,maxalphanames=5,minalphanames=4,maxnames=5,minnames=4]{biblatex}

\usepackage{zref-clever} \newcommand\Cref[1]{\zcref[S]{#1}} 

\newcommand\newtheoremx[2]{%
\AddToHook{env/#1/begin}{
\zcsetup{countertype={theorem=#1}}}
\newtheorem{#1}[theorem]{#2}
}

\newtheorem{theorem}{Theorem}[section]
\newtheoremx{claim}{Claim}
\newtheoremx{proposition}{Proposition}
\newtheoremx{corollary}{Corollary}
\newtheoremx{lemma}{Lemma}
\newtheoremx{conjecture}{Conjecture}

\newtheorem{introtheorem}{Theorem}

\zcsetup{countertype={introtheorem=theorem}}

\zcRefTypeSetup{conjecture}{
  name-sg = {conjecture},
  Name-sg = {Conjecture},
  name-pl = {conjectures},
  Name-pl = {Conjectures}
}

\theoremstyle{definition}
\newtheoremx{definition}{Definition}
\newtheoremx{notation}{Notation}
\newtheoremx{assumption}{Assumption}

\theoremstyle{remark}
\newtheoremx{example}{Example}
\newtheoremx{remark}{Remark}
\newtheoremx{question}{Question}

\numberwithin{equation}{section}

\newcommand\id{\operatorname{id}}

\newcommand{\Rep}{\operatorname{Rep}}
\newcommand{\Res}{\operatorname{Res}}
\newcommand{\Ind}{\operatorname{Ind}}
\newcommand{\Hom}{\operatorname{Hom}}
\newcommand{\End}{\operatorname{End}}

\newcommand{\Fr}{\operatorname{Fr}}

\newcommand{\supp}{\operatorname{supp}}

\newcommand{\FPdim}{\operatorname{FPdim}}

\newcommand{\Ver}{\operatorname{Ver}}
\newcommand{\Tilt}{\operatorname{Tilt}}

\newcommand{\sVec}{\mathrm{sVec}}
\newcommand{\op}{\mathrm{op}}

\newcommand{\blambda}{\bm{\lambda}}
\newcommand{\mN}{\mathbb{N}}
\newcommand{\mZ}{\mathbb{Z}}
\newcommand{\mA}{\mathbb{A}}
\newcommand{\mF}{\mathbb{F}}

\newcommand{\mG}{\mathbb{G}}
\newcommand{\mP}{\mathbb{P}}

\newcommand{\cA}{\mathcal{A}}

\newcommand{\cC}{\mathcal{C}}
\newcommand{\cV}{\mathcal{V}}
\newcommand{\cD}{\mathcal{D}}
\newcommand{\cP}{\mathcal{P}}
\newcommand{\cT}{\mathcal{T}}
\newcommand{\cF}{\mathcal{F}}

\newcommand{\cK}{\mathcal{K}}

\newcommand{\bk}{\Bbbk}
\newcommand{\bV}{\mathbf{V}}
\newcommand{\bO}{\mathbf{O}}

\newcommand{\unit}{\mathbf{1}}
\newcommand\kk{\Bbbk}
\renewcommand\o{\otimes}
\newcommand{\tto}{\twoheadrightarrow}

\newcommand{\ulambda}{{\underline{\lambda}}}

\newcommand{\ex}{\mathrm{ex}}

\newcommand\gr{\operatorname{gr}}
\newcommand\im{\operatorname{im}}
\newcommand\lmod{\text{-}\mathrm{mod}}
\newcommand\mfrak{\mathfrak m}
\renewcommand\o{\otimes}
\renewcommand\:\colon

\newcommand\ol{\overline}
\newcommand\Pol{\mathcal{Pol}}
\newcommand\Vect{\mathrm{Vect}}
\newcommand\SL{\mathrm{SL}}
\newcommand\len{\operatorname{len}}
\newcommand\tforall{\text{for all }}

\newcommand\sgd{\operatorname{sgd}}
\newcommand\Sym{\operatorname{Sym}}

\title{The Benson--Etingof--Ostrik conjecture for finite tensor categories}
\author{Kevin Coulembier}
\address{School of Mathematics and Statistics, University of Sydney, Australia}
\email{kevin.coulembier@sydney.edu.au}
\author{Johannes Flake}
\address{Mathematical Institute, University of Bonn, Germany}
\email{flake@math.uni-bonn.de}

\begin{document}

\begin{abstract}
    We prove that every finite symmetric tensor category over an algebraically closed field of positive characteristic admits a symmetric tensor functor to a higher Verlinde category. This was conjectured for arbitrary symmetric tensor categories of moderate growth by Benson, Etingof and Ostrik \cite{BEO}. The proof uses higher Frobenius functors, introduced by the authors in \cite{CF1}. The main effort is directed towards a proof of a conjecture from \cite{CF1} about thick tensor ideals in the representation category of elementary abelian $p$-groups, which implies that the higher Frobenius functors are symmetric monoidal. In fact, we prove a refinement of the latter conjecture that relates tensoring with restricted Steinberg modules to an iterated generalised cohomology of modules of elementary abelian $p$-groups. 
\end{abstract}

\maketitle


\section{Introduction}
Let $\bk$ be an algebraically closed field.
We consider $\bk$-linear abelian symmetric rigid monoidal categories satisfying some finiteness conditions, which are known as (symmetric) \emph{tensor categories}, see \cite{EGNO}.

If the characteristic of $\bk$ is zero then Deligne proved in \cite{Del02} that every tensor category of \emph{moderate growth} (which means that the length of tensor powers of a given object grows at most exponentially) is a representation category of an affine supergroup scheme. By the principle of tannakian reconstruction \cite{Del90}, this is equivalent to saying that every tensor category of moderate growth admits a tensor functor to (i.e., \emph{fibres} over) the tensor category of super vector spaces. Yet equivalently, Deligne's theorem states that the category of super vector spaces and its subcategory of vector spaces are the only \emph{incompressible} tensor categories of moderate growth, see \cite{CEO2}.

In this paper, we consider tensor categories (of moderate growth) over an algebraically closed field $\bk$ of characteristic~$p>0$. A naive version of Deligne's theorem is known to be false. Indeed, by \cites{BE, BEO,Cou-abelian}, there is an infinite sequence of incompressible tensor categories called \emph{higher Verlinde categories}
$$
\Ver_p \subsetneq \Ver_{p^2} \subsetneq \dots \subset \Ver_{p^\infty} = \bigcup_{r\ge1} \Ver_{p^r} ,
$$
where $(\Ver_{p^r})_{1\le r<\infty}$ are defined as the abelian envelopes of the quotients of $\Tilt \SL_2$.
The main conjecture of the field asserts that $\Ver_{p^\infty}$ plays the role of super vector spaces in characteristic $0$:

\begin{conjecture}[{\cite[Conjecture~1.4]{BEO}}] \label{conj:BEO} Any symmetric tensor category of moderate growth over an algebraically closed field of positive characteristic $p$ fibres over $\Ver_{p^\infty}$.
\end{conjecture}

As of now, the most important positive result towards \Cref{conj:BEO} is a characterisation in \cite{CEO} of those tensor categories that fibre over $\Ver_p$, which built on the earlier special cases of fusion categories \cite{Os} and finite tensor categories \cite{EOf}. It is shown in \cite{CEO} that a tensor category of moderate growth admits a tensor functor to $\Ver_p$ if and only if a certain Frobenius functor, originally defined by Ostrik in \cite{Os}, acts exactly. We say the Frobenius functor \emph{detects} fibring over $\Ver_p$. Crucial in the definition of Ostrik's functor was a realisation of $\Ver_p$ as the semisimplification of the category of representations of the cyclic $p$-group $C_p$, rather than of $\Tilt SL_2$.

In an attempt at generalising these ideas, the authors in \cite{CF1} constructed realisations of all higher Verlinde categories $\Ver_{p^r}$ in terms of categories of representations for the elementary abelian $p$-groups $C_p^r$. We then constructed candidates for generalisations of Ostrik's functor to higher Frobenius functors for all $r$. However, some properties (collected using the notion \emph{$\bO$-functor}) of our higher Frobenius functors remained conjectural. In fact, they were shown to rely on the following conjecture on representations of elementary abelian $p$-groups.

\begin{conjecture}[{\cite[Conjecture~5.4.2(1)]{CF1}, \cite{CF2}}] \label{conj:CF} For any $r\ge1$, fix an embedding of the elementary abelian $p$-group $E$ of rank $r$ into $\SL_2(\kk)$. Let $S\in\Rep E$ be the restriction of the tilting module with highest weight $p^{r-1}-1$. Then the functor $S\o-$ on $\Rep E$ has values in the subcategory of restrictions of $\SL_2$-tilting modules.
\end{conjecture}

In this paper, we resolve \Cref{conj:CF} by proving a refined description of the functor $S\o-$. Using an iterated cohomology $H_X$ for modules $X$ of elementary abelian $p$-groups that we introduce in \Cref{sec:St-Cpr}, and denoting restricted $\SL_2$-simples by the symbols $L(i)$ according to their highest weight $i\in\mN$, we show:

\begin{introtheorem}[\Cref{thm:wet-spaghetti}] \Cref{conj:CF} is true; more precisely, for $X\in\Rep E$ and $S$ as above: 
\begin{equation} \label{eq:wet-spaghetti}
S\o X\cong\bigoplus_{0\le a< p^r} (S\o L(a))^{\oplus \dim H_X(a)}.    
\end{equation}
As a consequence, the functors $\Phi^V_\cC$ from \cite{CF1} are $\bO$-functors.
\end{introtheorem}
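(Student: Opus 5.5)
We argue by induction on the rank $r$, exploiting the Steinberg-type tensor product factorisation of the tilting module $S=T(p^{r-1}-1)$. For $r=1$ we have $S=T(0)=\unit$ and $E=C_p$. The iterated cohomology $H_X(a)$ for $0\le a<p$ should then record the multiplicity of the Jordan block of size $a+1$ in $X$. Since $L(a)|_{C_p}$ is exactly that Jordan block, \eqref{eq:wet-spaghetti} is the Jordan normal form decomposition. For $r\ge2$ we write $E=E'\times C_p$, with $E'$ of rank $r-1$ embedded so that it is compatible with the Frobenius twist. We use the factorisation
\[
T(p^{r-1}-1)\cong \mathrm{St}\o T(p^{r-2}-1)^{[1]},
\]
where $\mathrm{St}=L(p-1)$ is the restricted Steinberg module. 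Restricted to $E$, this says roughly that $S$ is the Steinberg module for one $C_p$-factor tensored with the Frobenius twist of the rank-$(r-1)$ analogue $S'$.

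The key step is a lemma on the first factor. For a $C_p$-module (or, relatively, an $E$-module restricted along the distinguished $C_p$), tensoring with $\mathrm{St}$, which is projective over $C_p$, kills the ``non-projective part''. Concretely, I would show that $\mathrm{St}\o X$ is a direct sum of modules $\mathrm{St}\o L(b)$ for $0\le b<p$. The multiplicities are given by a generalised cohomology $H^{(1)}_X(b)$: this is a module for the quotient $E/C_p\cong E'$, obtained as the kernel modulo image of $(g-1)^{b+1}$ and $(g-1)^{p-b-1}$ for a generator $g$ of the $C_p$-factor. The guiding fact is that $\mathrm{St}\o V$, for a $C_p$-module $V$, depends only on these graded pieces. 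I would realise this as a natural isomorphism of $E$-modules
\[
\mathrm{St}\o X\cong\bigoplus_{b}\mathrm{St}\o L(b)\o H^{(1)}_X(b)^{[1]}.
\]
Here $E'$ acts through the twisted embedding, and the decomposition is compatible with the $E'$-action because the $E'$-action commutes with the $C_p$-filtration. This is where the precise definition of $H_X$ from \Cref{sec:St-Cpr} must be used: $H_X$ is presumably defined recursively as $H_X(b+pc)=H_{H^{(1)}_X(b)}(c)$.

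Then $S\o X\cong\bigoplus_b (S'^{[1]}\o\mathrm{St}\o L(b))\o H^{(1)}_X(b)^{[1]}$. Applying the induction hypothesis to the $E'$-module $H^{(1)}_X(b)$ and twisting gives
\[
S'^{[1]}\o H^{(1)}_X(b)^{[1]}\cong\bigoplus_c (S'\o L(c))^{[1]\,\oplus \dim H_X(b+pc)}.
\]
Finally, Steinberg's tensor product theorem gives $\mathrm{St}\o S'^{[1]}\o L(b)\o L(c)^{[1]}\cong S\o L(b+pc)$, since $L(b)\o L(c)^{[1]}\cong L(b+pc)$. This yields \eqref{eq:wet-spaghetti}. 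Conjecture~\ref{conj:CF} follows because each $S\o L(a)$ is a restriction of an $\SL_2$-module that is a tilting module: $T(p^{r-1}-1)\o L(a)$ is tilting for $a<p^r$ by Donkin-type results for $\SL_2$. The statement about $\Phi^V_\cC$ being $\bO$-functors follows from the reduction in \cite{CF1}.

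The main obstacle is the relative Steinberg lemma. Tensoring with $\mathrm{St}$ is not a priori compatible with an action of $E'$ that does not split off as a direct product factor on $X$. One must construct the decomposition naturally, so that the $E'$-action on the multiplicity spaces is exactly the twisted action on $H^{(1)}_X(b)$. I would try to do this by choosing the embedding so that the $C_p$-factor acts on $\mathrm{St}$ by a single Jordan block. Then $\mathrm{St}\o X\cong\Ind$-type and $\Hom_{C_p}(L(b),\mathrm{St}\o X)$ can be computed functorially via the filtration by $\ker(g-1)^i$, with the Frobenius twist entering through the commutation of the unipotent elements.
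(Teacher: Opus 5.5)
There is a genuine gap. The relative Steinberg lemma that carries your induction is false, and the recursion you guess for $H_X$ runs in the wrong direction.

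Take $p=2$ and $r=2$. Then $S=\mathrm{St}=L(1)$ and $S'=\unit$, so your lemma is already the whole statement. Let $X\in\Rep E$ correspond under $\Pol_2\simeq A_2\lmod$ to the module with basis $a,b,c,d$, where $z_0\colon a\mapsto c,\ b\mapsto d$ and $z_1\colon a\mapsto d$, all other basis vectors being killed. Each $1\neq g\in E$ acts by $1+\lambda z_0+\lambda^2 z_1$ with $\lambda=\iota(g)\neq0$. Hence $\ker(g-1)=\im(g-1)=\langle c,d\rangle$, with trivial $E$-action. Whatever $C_p$-factor you choose, your formula therefore predicts $\mathrm{St}\o X\cong(\mathrm{St}\o L(1))^{\oplus2}=T(2)^{\oplus2}$. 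With your literal $\ker/\im$ definition it even predicts $0$; that definition computes stable $\Hom$ out of $J_{b+1}$, not the multiplicity of $J_{b+1}$. Now $T(2)$ has no projective summand, since $z_0z_1$ acts on it by zero. But $\mathrm{St}\o X$ is the polynomial representation $(1+wT)\o(1+z_0T+z_1T^2)$, with $w$ the nilpotent Jordan block on $L(1)=\kk^2$. Its $T^3$-coefficient is the action of $z_0z_1$, and it equals $w\o z_1\neq0$. So $\mathrm{St}\o X$ has a free summand; indeed \eqref{eq:wet-spaghetti} gives $L(1)^{\oplus2}\oplus L(3)$, since $H_X(0)=2$ and $H_X(2)=1$.

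The underlying point is that iterated cohomologies for commuting operators do not commute. The paper's $H_X(a)=B_{a_0+1}(c_0|\cdots B_{a_{r-1}+1}(c_{r-1}|X)\cdots)$ takes cohomology of the top operator $c_{r-1}=\phi^{-1}(z_{r-1})$ first, whereas you take the bottom one first. Separately, your setup does not make sense inside $\Rep E$. A Frobenius twist $V^{[1]}$ restricts to $E$ along $\iota^p$, which is again an embedding of all of $E$. So neither $S'^{[1]}$ nor your multiplicity spaces are inflated from $E/C_p\cong E'$, and the rank-$(r-1)$ hypothesis cannot be applied to them there.

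The paper instead works in $\Pol=\Rep\mG_a$, where $\Pol_r\simeq A_r\lmod\simeq\Rep E$ and Frobenius twists make sense, and it splits off the \emph{top} digit. By \Cref{prop:splitting} and \Cref{lem:frob-twist}, $A_{r-1}\o Y\cong\bigoplus_j A_{r-1}\o B_j(Y)\o J_j^{(r-1)}$, with $B_j$ taken with respect to $z_{r-1}$. The induction uses $J(p^{r-1}-1)\cong J(p^{r-2}-1)\o J(p-1)^{(r-2)}$, the opposite factorisation to yours. The multiplicity spaces $B_j(Y)$ are then untwisted objects of $\Pol_{r-1}$, to which the inductive hypothesis applies.

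Even this step is hard. On $A_{r-1}\o Y$ the generator $z_{r-1}$ acts by $\id\o z_{r-1}+R$, with a square-zero carry term $R$, and $z_r$ acts by $D_2$ (\Cref{lem:tensor-product}). The substance of the proof is building a morphism that is an isomorphism modulo $\mfrak$, using $\cK$, weight filtrations and the correction map $\ell$ (\Cref{lem:correction}, \Cref{lem:extension}, \Cref{prop:lifting}), and then applying Nakayama. Your closing steps, namely tiltingness of $S\o L(a)$ and the deduction of the $\bO$-functor statement from \cite{CF1}, are fine.
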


We then use these $\bO$-functors to show a finite version of \Cref{conj:BEO}.

\begin{introtheorem}[\Cref{thm:BEO}] \label{thm:B} \Cref{conj:BEO} is true for \textbf{finite} symmetric tensor categories; in fact, any such category fibres over $\Ver_{p^r}$ for some $1\le r<\infty$.
\end{introtheorem}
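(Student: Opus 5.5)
We propose to prove Theorem~\ref{thm:B} by induction along the tower of higher Frobenius functors, using the first theorem above as the key input. By \cite{CF1}, once the functors $\Phi^V_\cC$ are known to be $\bO$-functors, they are symmetric monoidal and right exact (or exact on suitable subcategories). They take values in $\cC\boxtimes\Ver_{p^r}$, or in a category built from $\cC$ and the realisation of $\Ver_{p^r}$ as a quotient of $\Rep C_p^r$. The relevant statement from \cite{CF1} is a criterion of the form: if $\Phi^{(r)}_\cC$ is exact, then $\cC$ fibres over $\Ver_{p^r}$. This generalises the result of \cite{CEO} for $r=1$. So the plan is to show that for a finite $\cC$ some $\Phi^{(r)}_\cC$ is exact.

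The first step is to recall from \cite{CF1} the higher analogue of the \cite{CEO} argument. There, non-exactness of the level-$r$ Frobenius functor is witnessed by a nonzero morphism, or nontrivial extension, that gets killed. Equivalently, some object has ``too many'' $p^r$-th symmetric invariants. For $r=1$, \cite{CEO} reduce exactness of $\Fr$ to the condition that no object $X$ admits a nilpotent behaviour detected by $C_p$ acting on $X^{\o p}$. For level $r$ the analogous obstruction lives in the action of $C_p^r\subset S_{p^r}$ on $X^{\o p^r}$.

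The second step is to use finiteness. A finite tensor category has finitely many simples and a projective generator $P$. Its Frobenius--Perron dimensions are algebraic integers with bounded degree and growth. I would show that the sequence of categories obtained by iterating the Frobenius-type constructions must stabilise. For instance, consider the images $\cC_r$ of $\cC$ under $\Phi^{(r)}$, or the kernels of the induced maps on Grothendieck rings. These give a monotone sequence of subquotients with bounded numbers of simples, so it stabilises at some $r$. At the stable stage, I would argue that the obstruction to exactness vanishes. The level-$(r+1)$ functor factors through level $r$ composed with a level-one Frobenius functor of the $\Ver_{p^r}$-enriched category (a relative version of Ostrik's functor, as in \cite{CF1}). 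Stability then forces this relative Ostrik functor to be exact, and one applies the relative form of \cite{CEO}. \cite{EOf} did the analogous thing for finite categories over $\Ver_p$.

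The main obstacle is this stabilisation step. One must make precise a numerical invariant, most naturally the Frobenius--Perron dimension of the projective generator, or the number of simples of the image, that strictly decreases or is bounded whenever $\Phi^{(r)}$ fails to be exact. I would first try the following. For a finite category, non-exactness of the relative Frobenius functor at level $r$ should produce a proper ``Frobenius twist'' of a projective cover whose $\FPdim$ is divisible by an extra factor governed by $\dim\Ver_{p^{r+1}}/\dim\Ver_{p^r}$. Then $\FPdim(P)$, which is finite, bounds the number of steps. Once exactness at some $r$ is established, the fibre functor $\cC\to\Ver_{p^r}\subset\Ver_{p^\infty}$ follows from the $\bO$-functor properties and the tannakian-type reconstruction of \cite{CF1}.
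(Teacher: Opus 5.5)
Your top-level plan (find an exact higher Frobenius functor, then deduce fibring) has the right shape. However, the step that carries all the weight, namely that some $\Phi^{\ulambda}_\cC$ is exact, is not actually proved.

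Your stabilisation argument rests on two unestablished ingredients. The first is a factorisation of the level-$(r+1)$ functor as a level-$r$ functor followed by a ``relative Ostrik functor''. Nothing of this kind is available: the functors $\Theta^{\ulambda}\colon\Rep C_p^{r+1}\to\Ver_{p^{r+1}}$ are built directly from the tilting realisation of $\Ver_{p^{r+1}}$, and a composite with Ostrik's functor only produces $\Ver_{p^r}\boxtimes\Ver_p$-valued functors. The second is an $\FPdim$-divisibility invariant that you never define. You also treat each level as having a single functor, whereas there is a family $\Phi^{\ulambda}$, $\ulambda\in\mP^{r-1}_f$, and the exact one must be chosen depending on $\cC$.

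The missing idea is support-theoretic, and this is where \Cref{thm:wet-spaghetti} is used beyond the mere existence of $\Phi^{\ulambda}$. Since $St_{r-1}\otimes M$ is a restricted tilting module and $\supp St_{r-1}=\{\blambda\}$, one has $\Theta^{\ulambda}(M)\neq 0$ if and only if $\blambda\in\supp(M)$ (\Cref{lem:kerTheta}). By \cite[Proposition~5.2.4]{CF1}, $\Phi^{\ulambda}_\cC$ is exact as soon as $\Theta^{\ulambda}(\Hom(P,Q^{\otimes p^r}))\neq0$, where $P$ is a projective generator and $Q$ the injective hull of $\unit$. The paper finds suitable $r$ and $\ulambda$ as follows (\Cref{lem:notproj}, \Cref{cor:exactforsome}):
\begin{enumerate}
\item Moderate growth (\cite[Corollary~4.11]{CEO} together with \cite[Proposition~A.1]{CEO}) prevents $\Hom(P,Q^{\otimes n})$ from being $S_n$-projective for all $n$.
\item For the minimal non-projective $n$, the restrictions to proper Young subgroups are projective. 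Chouinard's theorem then gives a transitive elementary abelian subgroup, necessarily $E_s<S_{p^s}$, over which the module is non-projective but projective over every proper subgroup.
\item Its support therefore meets $\mP^{s-1}_f$ (\Cref{lem:support}), and the bijection $\ulambda\mapsto\blambda$ of \Cref{lem:bijP} yields the required $\ulambda$.
\end{enumerate}

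Second, the implication ``$\Phi^{\ulambda}_\cC$ exact implies that $\cC$ fibres over $\Ver_{p^r}$'' cannot be quoted from \cite{CF1}, and the $\bO$-functor formalism does not supply it directly. Exactness only makes $\Phi_\cC$ a tensor functor $\cC\to\cC^{(r)}\boxtimes\Ver_{p^r}$, and the $\cC$-factor still has to be eliminated. This is \Cref{thm:general}, proved in the paper, and the argument runs as follows:
\begin{enumerate}
\item Take a simple $V_{\max}\notin\cC_{\ex}$ of maximal Frobenius--Perron dimension.
\item An $\FPdim$ count, the mod-$p$ estimate of \Cref{lem:VW} on $[V^{\otimes p^s}:W]$, and preservation of dimensions by $\Theta$ show that no $V_{\max}\boxtimes S$ occurs in the image. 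Hence $\Phi_\cC$ lands in $\cC'\boxtimes\Ver_{p^r}$ for a proper tensor subcategory $\cC'$ (\Cref{lem:foriter}).
\item Iterating at most as many times as $\cC$ has simples lands in $\cC_{\ex}\boxtimes\Ver_{p^r}^{\boxtimes N}$.
\item \cite{EOf} gives a tensor functor $\cC_{\ex}\to\Ver_p$, or from the Serre closure of $\cC_{\ex}$ to $\Ver_4^+$ when $p=2$.
\item Tensoring the factors together in $\Ver_{p^n}$ finishes the proof.
\end{enumerate}
Note that this is where a finite, decreasing invariant genuinely appears, with the level $r$ held fixed, rather than in a search over levels for an exact functor.
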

Moreover, by \Cref{thm:general}, see \Cref{rem:toomany} for a more precise discussion, the functors $\Phi^V_\cC$ detect fibering over $\Ver_{p^r}$ for finite symmetric tensor categories.

Around the time we finished writing this article, Alexander Sherman independently found an alternative proof \cite{Sherman} of \Cref{conj:CF}, also using AI. While his proof does not imply \Cref{eq:wet-spaghetti}, it is shorter and does seem conceptually illuminating to us. 

\subsection*{Use of AI} The main ideas in the proofs in \Cref{sec:St} and \Cref{sec:BEO} were originally obtained using {GPT-6} Astra, using as an input the formula in \Cref{eq:wet-spaghetti}, which was obtained previously using extensive numerical experiments with the help of Claude~4.6--5 Opus. This document was then written by hand and checked with GPT-6 Astra.

\subsection*{Remark} Four days after the first version of this paper was submitted to arXiv, a commercial AI company released a proof written by AI, of the same two conjectures that are resolved in this paper. More concretely, it announces a proof of Theorem B and a very different proof of \Cref{conj:CF}, which does not prove the concrete \Cref{eq:wet-spaghetti}.

\subsection*{Acknowledgments} We thank an anonymous referee of \cite{CF1} whose questions essentially about a formula like \Cref{eq:wet-spaghetti} triggered us to look into this again. 
J.F.~ was supported by Hausdorff Center for Mathematics Bonn. J.F.'s Claude license was provided by the 2024 Max Planck--Humboldt Research Award (Max Planck Society / Alexander von Humboldt Foundation) bestowed on Geordie William\-son and hosted by Catharina Stroppel. K.C. was supported by ARC grants FT220100125 and DP250100762. We thank Alex Sherman for sharing his independent proof.


\section{Preliminaries} 

\subsubsection{Notation}

Throughout the paper, $\kk$ is an algebraically closed field of characteristic $p>0$. All vector spaces are over $\kk$, and tensor products are over $\kk$ if not indicated otherwise. We write $\Vect$ for the category of finite-dimensional vector spaces. The symmetric group $S_n$ is the group of permutations of $\{1,2,\ldots, n\}$. For a finite group $G$, we let $\Rep G$ denote its category of finite-dimensional representations over $\bk$. We will use the same notation for an affine group scheme $G$ and its category of finite-dimensional rational representations.

\subsection{Elementary abelian groups}

In this paper, the central role will be played by elementary abelian $p$-groups. Here we review how we will view them as subgroups of $\SL_2(\bk)$ and of symmetric groups, and the basics of cohomological support theory. For an elementary abelian $p$-group $C_p^r$, for $r\in\mZ_{>0}$, we denote the generator of the $j$th copy of $C_p$ by $g_j$.

\subsubsection{Subgroups of \texorpdfstring{$\SL_2$}{SL2}}\label{sec:inSL2}

We will use embeddings of $C_p^r$ into the unipotent radical of the Borel subgroup of lower-triangular matrices of $\SL_2$ (this radical is thus just a copy of $\mG_a$). In other words, we consider homomorphisms
\[\iota_{\ulambda}:C_p^r\;\to\;(\bk,+)\;\to\; SL_2(\bk),\quad g_i\mapsto \left(\begin{array}{cc}
    1 &0  \\
    \lambda_i & 1 
\end{array}\right),\]
for non-zero $(\lambda_1,\ldots,\lambda_r)\in\mA^r=\mA^r(\bk)$. Up to conjugacy, the above morphism only depends on the image $[\lambda_1:\cdots:\lambda_r]$ in $\mP^{r-1}=\mP^{r-1}(\bk)$, so we will usually reserve the notation $\ulambda$ for this element in $\mP^{r-1}$. The following is \cite[Lemma~3.2.2]{CF2}.

\begin{lemma}\label{Lem:Moore}
    The following conditions are equivalent on 
    $\ulambda=[\lambda_1:\lambda_2:\cdots:\lambda_r]$ in $\mP^{r-1}(\bk)$:
    \begin{enumerate}
        \item The homomorphism $\iota_{\ulambda}$ is injective.
        \item The elements $\lambda_i\in\bk$ are linearly independent over $\mF_p$.
        \item The determinant of the Moore matrix 
        $$ \mathbb{M}(\ulambda):=\begin{pmatrix}
\lambda_1 & \lambda_2 & \lambda_3 & \cdots & \lambda_r  \\
\lambda_1^p & \lambda_2^p & \dots \\
\dots \\
\lambda^{p^{r-1}}_1 & \dots&&&\lambda_r^{p^{r-1}}
\end{pmatrix}$$
is non-zero.
    \end{enumerate}
    We denote the set of $\ulambda$ satisfying these equivalent properties by $\mP_f^{r-1}\subset\mP^{r-1}$.
\end{lemma}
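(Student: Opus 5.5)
We prove (1)$\Leftrightarrow$(2) and (2)$\Leftrightarrow$(3) separately. Throughout, fix a representative $(\lambda_1,\ldots,\lambda_r)\in\mA^r\setminus\{0\}$ of $\ulambda$. Each condition is invariant under rescaling by $c\in\bk^\times$. For (1) and (2) this is clear. For (3), multiplying every $\lambda_i$ by $c$ multiplies the $k$-th row of $\mathbb{M}(\ulambda)$ by $c^{p^{k-1}}$, so the determinant only changes by a nonzero factor. The conditions are therefore well-defined on $\mP^{r-1}$.

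\textbf{Step 1: (1)$\Leftrightarrow$(2).} The map $\iota_{\ulambda}$ factors as
\[C_p^r\to(\bk,+)\to\SL_2(\bk),\qquad g_1^{a_1}\cdots g_r^{a_r}\mapsto\sum_i a_i\lambda_i,\]
followed by the unipotent embedding $t\mapsto\left(\begin{smallmatrix}1&0\\ t&1\end{smallmatrix}\right)$. The second map is an injective group homomorphism. Hence $\iota_{\ulambda}$ is injective exactly when the $\mF_p$-linear map $\mF_p^r\to\bk$, $(a_i)\mapsto\sum_i a_i\lambda_i$, has trivial kernel. That is precisely $\mF_p$-linear independence of the $\lambda_i$.

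\textbf{Step 2: (2)$\Leftrightarrow$(3), the easy direction.} Suppose the $\lambda_i$ are $\mF_p$-linearly dependent, say $\sum a_i\lambda_i=0$ with $a_i\in\mF_p$ not all zero. Applying the Frobenius $x\mapsto x^{p^k}$ gives
\[\sum_i a_i\lambda_i^{p^k}=0\qquad\text{for all }k,\]
since $a_i^{p}=a_i$. So the columns of $\mathbb{M}(\ulambda)$ satisfy a nontrivial linear relation, and $\det\mathbb{M}(\ulambda)=0$. This proves (3)$\Rightarrow$(2).

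\textbf{Step 3: (2)$\Rightarrow$(3), the main step.} Assume the $\lambda_i$ are $\mF_p$-independent, and suppose for contradiction that $\det\mathbb{M}(\ulambda)=0$. Then some nonzero row vector $(c_0,\ldots,c_{r-1})\in\bk^r$ annihilates the columns, that is,
\[c_0\lambda_i+c_1\lambda_i^p+\cdots+c_{r-1}\lambda_i^{p^{r-1}}=0\qquad\text{for every }i.\]
So every $\lambda_i$ is a root of the additive polynomial
\[f(x)=\sum_{k<r}c_kx^{p^k}.\]
This $f$ is nonzero, has degree at most $p^{r-1}$, and is $\mF_p$-linear as a map $\bk\to\bk$.

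By $\mF_p$-linearity, the root set of $f$ contains the $\mF_p$-span of $\lambda_1,\ldots,\lambda_r$. By (2), this span has $p^r$ elements. But a nonzero polynomial of degree at most $p^{r-1}$ has at most $p^{r-1}<p^r$ roots, which is a contradiction. Hence $\det\mathbb{M}(\ulambda)\neq 0$.

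The only point needing care is the use of additivity of $f$ to pass from the roots $\lambda_i$ to their full $\mF_p$-span. An alternative route is the classical Moore determinant formula,
\[\det\mathbb{M}=\prod_{i}\ \prod_{c_1,\ldots,c_{i-1}\in\mF_p}\bigl(\lambda_i+c_1\lambda_1+\cdots+c_{i-1}\lambda_{i-1}\bigr)\]
up to a nonzero scalar. This gives (2)$\Leftrightarrow$(3) in one stroke, but the root-counting argument above avoids proving that formula.
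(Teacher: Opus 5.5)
Your proof is correct and complete. The paper itself gives no argument for this lemma; it only quotes \cite[Lemma~3.2.2]{CF2}, so there is no internal proof to compare against.

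Steps 1 and 2 are routine. The non-trivial direction (2)$\Rightarrow$(3) is settled cleanly by your Step 3. A non-zero left kernel vector of $\mathbb{M}(\ulambda)$ gives a non-zero additive polynomial of degree at most $p^{r-1}$. Its roots form an $\mF_p$-subspace, so they contain the $p^r$-element $\mF_p$-span of the $\lambda_i$, which is a contradiction. This is the standard self-contained argument, and it is all the lemma needs.

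The Moore determinant formula you mention as an alternative gives more: an explicit factorisation showing that $\det\mathbb{M}$ vanishes exactly on the union of the $\mF_p$-rational hyperplanes $\sum_i a_i\lambda_i=0$. That description of the complement of $\mP_f^{r-1}$ is what the paper later uses in \Cref{lem:support}. However, it already follows from condition (2) once the equivalence is known, so your root-counting argument loses nothing relevant.
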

 Following \cite[\S 4.2]{CF2}, for $\ulambda\in\mP_f^{r-1}$, we can define $\blambda\in\mP^{r-1}$, uniquely determined by
    $$\sum_{i=1}^r\lambda_i^{p^l}\blambda_i=0,\qquad\mbox{for all}\quad 0\le l\le r-2.$$

\begin{lemma}\label{lem:bijP}
    The assignment $\ulambda\mapsto\blambda$ gives a bijection $\mP_f^{r-1}\to\mP_{f}^{r-1}$.
\end{lemma}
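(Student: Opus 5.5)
We need to show three things: that $\blambda$ is well defined, that it lies in $\mP^{r-1}_f$, and that the assignment is bijective. We will do this by interpreting everything in terms of the $\mF_p$-span $V_{\ulambda}=\sum_i\mF_p\lambda_i\subset\bk$, which is an $r$-dimensional $\mF_p$-subspace when $\ulambda\in\mP^{r-1}_f$ by \Cref{Lem:Moore}.

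\emph{Well-definedness.} By \Cref{Lem:Moore}(3) the Moore matrix $\mathbb{M}(\ulambda)$ is invertible, so its first $r-1$ rows are linearly independent. Their common kernel in $\bk^r$ is therefore one-dimensional, and $\blambda$ is the corresponding point of $\mP^{r-1}$. This point depends only on the class $\ulambda$: rescaling $\lambda$ by $c$ multiplies row $l$ by $c^{p^l}$, which does not change the kernel.

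\emph{$\blambda\in\mP^{r-1}_f$.} This is the main step. By \Cref{Lem:Moore}(2) it suffices to show that the $\blambda_i$ are $\mF_p$-linearly independent. Suppose $\sum_i a_i\blambda_i=0$ with $a=(a_1,\dots,a_r)\in\mF_p^r$ nonzero. Extend $a$ to a basis of $\mF_p^r$ and let $g\in GL_r(\mF_p)$ change coordinates. Because $g$ has entries in $\mF_p$, it commutes with Frobenius, so the defining equations transform compatibly: $\ulambda$ transforms contragrediently to $\blambda$, and the Moore condition is preserved up to the factor $\det g$. We may therefore assume $\blambda_1=0$. Then $(\blambda_2,\dots,\blambda_r)$ is a nonzero vector killed by the $(r-1)\times(r-1)$ matrix with rows $(\lambda_2^{p^l},\dots,\lambda_r^{p^l})$ for $0\le l\le r-2$. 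That matrix is exactly the Moore matrix of $(\lambda_2,\dots,\lambda_r)$. Since $\lambda_2,\dots,\lambda_r$ are $\mF_p$-independent, \Cref{Lem:Moore} applied in rank $r-1$ says this Moore matrix is invertible, which is a contradiction. (For $r=1$ the statement is trivial.)

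\emph{Bijectivity.} To avoid proving injectivity and surjectivity separately, we will compute the square of the map and show it is an explicit bijection. Consider the Moore polynomial
$$P(x)=\prod_{v\in V_{\ulambda}}(x-v)=\sum_{l=0}^{r}c_lx^{p^l}.$$
The defining equations say that $\blambda$ is proportional to the last row of $\mathbb{M}(\ulambda)^{-1}$ transposed. Using the classical formula for the inverse of a Moore matrix (the dual basis of $V_{\ulambda}$ with respect to the trace-type pairing coming from $P$), one gets $\blambda_i\propto \bigl(\prod_{v\ne0\in V_i}v\bigr)^{-1}$, where $V_i$ is the span of the $\lambda_j$ with $j\neq i$, up to a fixed power related to $p^{r-1}$. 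From this formula, applying the construction twice should return $\ulambda$ twisted by a power of Frobenius, namely $\lambda_i\mapsto\lambda_i^{p^{r-1}}$ up to an overall scalar. The Frobenius twist is a bijection of $\mP^{r-1}_f$, since $\bk$ is perfect and the twist preserves $\mF_p$-independence. Hence the assignment $\ulambda\mapsto\blambda$ is injective, and since its square is surjective it is also surjective.

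The explicit Moore-inverse computation is the step I expect to be the hardest. If it becomes messy, the fallback is a direct argument: given $\bm\mu\in\mP^{r-1}_f$, the conditions $\sum_i\lambda_i^{p^l}\mu_i=0$ for $0\le l\le r-2$ can be rewritten, after applying the inverse Frobenius $p^{-l}$, as linear conditions on $\lambda$ in the dual Moore-type system. Running the well-definedness argument on that system produces a unique preimage, which gives both injectivity and surjectivity.
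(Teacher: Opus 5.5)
Your well-definedness check and your proof that $\blambda\in\mP^{r-1}_f$ are correct. The latter uses $GL_r(\mF_p)$-equivariance to arrange $\blambda_1=0$ and then applies \Cref{Lem:Moore} to $(\lambda_2,\dots,\lambda_r)$. It is a clean alternative to the paper's argument, which notes that an $\mF_p$-relation $a$ among the $\blambda_i$ would put $a$ in the $\bk$-span of $\ulambda,\ulambda^p,\dots,\ulambda^{p^{r-2}}$, and then compares $\sum_j c_j\ulambda^{p^j}$ with its $p$-th power.

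The gap is in bijectivity, which you never actually prove. Your main route rests on a formula that is wrong as written. First, $\blambda$ is proportional to the last \emph{column} of $\mathbb{M}(\ulambda)^{-1}$, not the last row. Second, $\blambda_i\propto\bigl(\prod_{0\neq v\in V_i}v\bigr)^{-1}$ already fails for $r=2$: there $\blambda=[\lambda_2:-\lambda_1]$, whereas your formula gives $[\lambda_2^{1-p}:\lambda_1^{1-p}]$. (The cofactor expansion actually gives $\blambda_i\propto\bigl(\prod_{u\in\lambda_i+V_i}u\bigr)^{-1}$.) The same example shows the map is an involution for $r=2$, so for $r\ge2$ the square is $\ulambda\mapsto\ulambda^{p^{r-2}}$, not a twist by $p^{r-1}$.

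No Moore-inverse computation is needed. Raise $\sum_i\lambda_i^{p^l}\blambda_i=0$ to the power $p^{r-2-l}$ to get $\sum_i\lambda_i^{p^{r-2}}\blambda_i^{p^j}=0$ for all $0\le j\le r-2$. Since $\blambda\in\mP^{r-1}_f$, the defining system for the image of $\blambda$ has a unique solution in $\mP^{r-1}$. Hence the map sends $\blambda$ to $\ulambda^{p^{r-2}}$. Frobenius twists are bijections of $\mP^{r-1}_f$, and your ``square is bijective'' argument then finishes the proof.

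Your fallback is this same manipulation with inverse Frobenius. It does give injectivity, but as written it does not give surjectivity. You must also show that the unique solution $\lambda\in\mP^{r-1}$ of $\sum_i\lambda_i\mu_i^{p^{-l}}=0$ ($0\le l\le r-2$) lies in $\mP^{r-1}_f$, and the well-definedness argument does not provide this. Raising these equations to the power $p^{r-2}$ shows that $\lambda$ is the componentwise $p^{r-2}$-th root of the image of $\bm{\mu}$. It therefore lies in $\mP^{r-1}_f$ by your main step, and at that point you have exactly the paper's argument.
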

\begin{proof}
For $r=1$ the assertion is immediate, so assume $r\ge2$. First we need to establish that $\blambda\in\mP_f^{r-1}$. If we choose representatives in $\mA^r$, then $\blambda$ spans the orthogonal complement of $\ulambda, \ulambda^p,\ldots, \ulambda^{p^{r-2}}$, where the powers are taken component-wise. If $\blambda\not\in\mP_f^{r-1}$ then there are $a_i\in\mF_p$, not all zero, with $\sum_i a_i\blambda_i=0$, which thus in turn means that $(a_1,\ldots, a_r)\in\mA^r$ is in the linear span of $\ulambda, \ulambda^p,\ldots, \ulambda^{p^{r-2}}$. In other words, there are $c_i\in\bk$, not all zero, such that
\[\sum_{j=0}^{r-2}c_j\ulambda^{p^j}\;\in\;\mA^{r}(\mF_p).\]
By taking the $p$-th power, this means that
\[\sum_{j=0}^{r-2}c_j^p\ulambda^{p^{j+1}}\;=\;\sum_{j=0}^{r-2}c_j\ulambda^{p^j},\]
and by linear independence of the $\ulambda^{p^j}$ for $0\le j\le r-1$ it follows quickly that this forces $c_j=0$ for all $j$, a contradiction.

    The defining relations for $\blambda$ can be rewritten as $\sum_{i=1}^r\lambda_i^{p^{r-2}}\blambda_i^{p^j}=0$ for $0\le j\le r-2$. In other words, applying the procedure $\ulambda\mapsto \blambda$ to $\blambda$ produces $\ulambda^{p^{r-2}}$. Hence $\ulambda\mapsto \blambda$ has a two-sided inverse.
\end{proof}

\subsubsection{Subgroups of symmetric groups}

We let $E_r<S_{p^r}$ denote the copy of $C_p^r$ embedded as in \cite[\S 5.1.5]{CF1}. It is uniquely determined up to conjugacy due to the following well-known lemma.
\begin{lemma}\label{lem:ESn}
    If $n>1$ and $S_n$ has a transitive elementary abelian $p$-subgroup $E$ then $n=p^r$ for some $r\in\mZ_{>0}$ and $E$ is conjugate to $E_r$.
\end{lemma}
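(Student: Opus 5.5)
The plan is to reduce to the regular representation of $E$. Let $E<S_n$ be a transitive elementary abelian $p$-subgroup and set $X=\{1,\dots,n\}$.

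First, transitivity forces the action to be regular. Since $E$ is abelian and transitive, every point stabiliser is the same subgroup: for $x,y\in X$ write $y=gx$, and then $\mathrm{Stab}(y)=g\,\mathrm{Stab}(x)g^{-1}=\mathrm{Stab}(x)$. So this common stabiliser fixes every point of $X$. Because $E$ acts faithfully, being a subgroup of $S_n$, the stabiliser is trivial. Hence $X$ is a principal homogeneous space for $E$, and $|E|=n$. Since $E\cong C_p^r$ for some $r$, we get $n=p^r$, and $r\ge1$ because $n>1$.

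Next, regularity determines the embedding up to conjugacy. Fix $x_0\in X$. The map $E\to X$, $g\mapsto gx_0$, is a bijection that intertwines left multiplication on $E$ with the action on $X$. Transporting along this bijection identifies $E<S_n$ with the left regular representation of the abstract group $E\cong C_p^r$, up to conjugation by the permutation relabelling $X$. Any two isomorphic groups give regular representations conjugate in $S_{p^r}$: if $\phi\colon E\to E'$ is an isomorphism, the bijection $\phi\colon E\to E'$ conjugates one left-regular image onto the other.

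Finally, we compare with $E_r$. It remains to observe that $E_r$ from \cite[\S 5.1.5]{CF1} is itself a transitive elementary abelian subgroup of $S_{p^r}$ of rank $r$. By the argument above it is regular, so it is conjugate to the regular representation of $C_p^r$, and therefore conjugate to $E$.

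The main point needing care is only this last check. We need that $E_r$ is indeed transitive, for instance as $C_p^r$ acting on itself, or on $\mF_p^r$ by translations. This should be immediate from its definition in \cite{CF1}; otherwise the argument is elementary.
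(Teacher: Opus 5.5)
Your proposal is correct and follows essentially the paper's argument: you get regularity from the action being abelian, transitive and faithful, and then conjugacy of any two regular embeddings of isomorphic groups via the orbit map. The only difference is presentation: the paper writes the conjugating permutation down directly as $\sigma(e(1))=\varphi(e)(1)$ rather than passing through the left regular representation, and it likewise takes the transitivity of $E_r$ for granted from its construction.
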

\begin{proof}
    It is clear that any transitive abelian subgroup of $S_n$ must act regularly on $\{1,\ldots, n\}$. In particular $E=C_p^r$ with $n=p^r$. 

    So now consider two transitive elementary abelian $p$-subgroups $E,F$ of $S_n$ and an isomorphism $\varphi:E\to F$. Define $\sigma\in S_n$ via
    \[\sigma(e(1))=\varphi(e)(1),\]
    for all $e\in E$. That $\sigma$ is a permutation follows from regularity of the actions. Now we can observe that $F=\sigma E\sigma^{-1}$.
\end{proof}

\subsubsection{Support theory}
We refer to \cite{BensonBook} for the basic notions of cohomological support theory and the rank variety for elementary abelian $p$-groups.

Set $X_i=g_i-1\;\in\; \bk C_p^r.$
 For $M\in \Rep C_p^r$, we define its support $\supp(M)\subset\mP^{r-1}$ as the projective variety that has as cone the rank variety $V^{\mathrm{rk}}(M)\subset\mA^r$. Concretely, the point $[\mu_1:\mu_2:\cdots:\mu_{r}]\in \mP^{r-1}$ is in $\supp(M)$ if and only if $M$ is not projective as a module over the algebra $\bk[t]/t^p$, where $t$ acts on $M$ as
 $$\sum_{i=1}^r\mu_i X_i\;\in\; \bk C_p^r.$$
Recall that
\begin{equation}\label{sup:tensor}
\supp(M\otimes N)\;=\;\supp(M)\cap \supp(N),
\end{equation}
and that
$M\in\Rep C_p^r$ is projective if and only if $\supp(M)=\emptyset$.

\begin{lemma}\label{lem:support}
    If $M\in \Rep C_p^r$ is not projective, but is projective over every proper subgroup of $C_p^r$, then $\supp(M)\cap\mP^{r-1}_f\not=\varnothing$.
\end{lemma}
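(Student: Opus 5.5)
Since $M$ is not projective, $\supp(M)\neq\varnothing$. The plan is to show that $\supp(M)$ is disjoint from every linear subspace of $\mP^{r-1}$ that is defined over $\mF_p$ and proper, and then check that the complement of $\mP^{r-1}_f$ is exactly the union of these subspaces.

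First I identify the complement of $\mP^{r-1}_f$. By \Cref{Lem:Moore}, a point $\underline{\mu}$ lies outside $\mP^{r-1}_f$ exactly when there are $a_i\in\mF_p$, not all zero, with $\sum_i a_i\mu_i=0$. So $\mP^{r-1}\setminus\mP^{r-1}_f$ is the finite union of the hyperplanes $H_a=\{\sum_i a_i\mu_i=0\}$, one for each nonzero $a\in\mF_p^r$ up to scalar.

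Next I relate each such hyperplane to a subgroup. Choose an $\mF_p$-basis $b^{(1)},\ldots,b^{(r-1)}$ of $a^\perp\subset\mF_p^r$. The elements $h_j=\prod_i g_i^{b^{(j)}_i}$ generate a maximal subgroup $F<C_p^r$ of rank $r-1$. I then use the standard fact from rank variety theory \cite{BensonBook} that the rank variety of $\Res_F M$ is the intersection of $V^{\mathrm{rk}}(M)$ with the $\mF_p$-rational subspace spanned by the $b^{(j)}$, that is, the cone over $H_a$. The underlying point is that $h_j-1\equiv\sum_i b^{(j)}_iX_i$ modulo $J^2$, where $J$ is the augmentation ideal, and generic (shifted) cyclic subgroups only depend on the element modulo $J^2$ (Carlson's lemma). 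Concretely, for $\underline{\mu}\in H_a$ write $\mu=\sum_j c_jb^{(j)}$; then $\underline{\mu}\in\supp(M)$ if and only if $\sum_j c_j(h_j-1)$ acts non-freely on $M$, which holds if and only if $[c]\in\supp(\Res_F M)$.

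Now the hypothesis gives that $\Res_F M$ is projective, so $\supp(\Res_F M)=\varnothing$ and hence $\supp(M)\cap H_a=\varnothing$. Doing this for every $a$ shows $\supp(M)\subset\mP^{r-1}_f$. Since $\supp(M)\neq\varnothing$, the intersection $\supp(M)\cap\mP^{r-1}_f$ is nonempty.

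The main obstacle is justifying that the support of a restriction to an $\mF_p$-rational subgroup is the intersection with the corresponding linear subspace. This is independent of the choice of generators of $F$, and I would cite it from \cite{BensonBook}, where it appears as the restriction property of rank varieties together with the independence of the rank variety from the choice of generators.
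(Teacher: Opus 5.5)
Your proposal is correct and takes essentially the same route as the paper. Both identify $\mP^{r-1}\setminus\mP^{r-1}_f$ with the union of the $\mF_p$-rational hyperplanes attached to the maximal subgroups, apply the restriction property $\supp(M|_G)=\supp(M)\cap P_G$, and conclude using $\supp(M)\neq\varnothing$; the only difference is that the paper cites the restriction property as \cite[Lemma~1.9.8]{BensonBook}, while you sketch why it holds via the $J^2$-independence of shifted cyclic subgroups.
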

\begin{proof}
Subgroups $C_{p}^{r-1}\cong G<C_p^r$ are in natural bijection with codimension one $\mF_p$-subspaces of $\mF_p^r$. So for every such $G$ we have a corresponding $a^G\in \mP^{r-1}(\mF_p)$ describing the hyperplane. In particular, each such subgroup $G$ determines an $\mF_p$-rational hyperplane $P_G\subset\mP^{r-1}$ comprising all $\ulambda$ with $\sum_i a_i^G\lambda_i=0$. Moreover, by \cite[Lemma~1.9.8]{BensonBook} we have
\[\supp(M|_G)\;=\;\supp(M)\cap P_G.\]

Since the complement of the unions of the $P_G$ is precisely $\mP_f^{r-1}$, the condition that $M$ be projective over all proper subgroups implies $\supp(M)\subset \mP_f^{r-1}$ (see \cite[Lemma~1.9.5]{BensonBook}), while the condition that $M$ not be projective shows $\supp(M)\not=\varnothing$, concluding the proof.
\end{proof}

\subsection{Tensor categories}

\subsubsection{Definitions} \label{sec:def-tc}

We define \emph{tensor categories} to be essentially small finite-length abelian symmetric rigid monoidal $\kk$-linear categories with one-dimensional endomorphism space of the tensor unit. In other words, they are symmetric tensor categories in the sense of \cite{EGNO}. \emph{Tensor functors} are exact linear symmetric monoidal, and automatically faithful. A tensor category $\cC$ is said to be \emph{of moderate growth} if for any $X\in\cC$ there is $K\in\mathbb{R}$ such that $\len(X^{\o n})\le K^n$, where $\len$ denotes the length of an object in an abelian category. A tensor category is \emph{finite} if it has enough projectives and only finitely many simple objects up to isomorphism.

\begin{remark}\label{rem:twistlin}
    We will often encounter exact symmetric monoidal functors $F:\cC\to\cD$ between tensor categories that are not $\bk$-linear, but `twisted' $\bk$-linear by a field automorphism. In practice this automorphism will be a Frobenius homomorphism $\lambda\mapsto \lambda^q$ for a power $q$ of $p$, meaning $F(\lambda f)=\lambda^q F(f)$ for a morphism $f$ in $\cC$ and $\lambda\in\bk$. Since $\bk$ is perfect, we can twist the $\bk$-linear structure on the target, to obtain a new tensor category (with same underlying abelian monoidal category) for which $F$ is now an honest tensor functor. We will therefore largely ignore this subtlety and freely treat these functors as tensor functors.
\end{remark}

In a tensor category with finitely many simple objects up to isomorphism, for any object $X$, the exact functor $X\o -$ corresponds to multiplication with an integer matrix in the Grothendieck group. By the Frobenius--Perron theorem, the spectral radius of this matrix is an eigenvalue. The \emph{Frobenius-Perron dimension} $\FPdim(X)\in\mathbb R$ of $X$ is defined as this maximal eigenvalue. $\FPdim$ is additive under direct sums, multiplicative under tensor products, and preserved by tensor functors, see~\cite{EGNO}. As $\len(X^{\o n})\le \FPdim(X)^n$, tensor categories with finitely many isomorphism classes of simples, and in particular finite tensor categories, are automatically of moderate growth.

Since we are working over an algebraically closed field, for any two tensor categories $\cC,\cD$ in our sense, there is a \emph{Deligne tensor product} $\cC\boxtimes\cD$ (see \cite[Section~3]{CF1} and references therein for more information). It is a tensor category containing the naive tensor product category $\cC\o_\kk\cD$, whose objects are pairs of objects from $\cC$ and $\cD$, and whose hom-spaces are tensor products of hom-spaces in $\cC$ and $\cD$, respectively, over $\kk$. In fact, any object in $\cC\boxtimes\cD$ is a quotient of an object in $\cC\o_\kk\cD$, and the simple objects of $\cC\boxtimes\cD$ are exactly the objects of the form $X\boxtimes Y=(X,Y)$, for simple objects $X\in\cC$, $Y\in\cD$. If $\cC$ is a finite tensor category, then we can realise $\cC\boxtimes\cD$ as the category of $\bk$-linear functors $\cP_\cC^\op\to\cD$, where $\cP_\cC\subset\cC$ is the full subcategory of projective objects. In this realisation, an object of the form $X\boxtimes Y$ corresponds to the functor $\Hom(-,X)\o Y$.


\subsubsection{Higher Verlinde categories}

We recall the construction of the higher Verlinde categories $\Ver_{p^r}$ for $r\ge1$. Let $\Tilt\SL_2$ be the category of tilting modules of $\SL_2$ over $\kk$. Its indecomposable objects $T_i$ can be characterised by their highest weights $i\ge0$. Then $\Ver_{p^r}$ is the monoidal abelian envelope of the quotient category $\Tilt\SL_2/I_r$, where $I_r$ is the tensor ideal of morphisms that factor through a direct sum of tilting modules $T_i$ with $i\ge p^r-1$. This ideal $I_r$ is also the tensor ideal generated by the Steinberg representation $T_{p^r-1}$. As a special case, when $r=1$, $\Tilt\SL_2/I_1$ is already semisimple, and hence its own abelian envelope, with simples $L_1,\dots,L_{p-1}$, where $L_i$ is the image of $T_{i-1}$, following the notation from \cite{Os}, rather than \cite{BEO}.

\subsubsection{\texorpdfstring{$\bV$}{V}-functors and \texorpdfstring{$\bO$}{O}-functors}\label{sec:VOfun}

We will freely use that for a tensor category $\cC$ and $X,Y\in\cC$, the braiding produces algebra morphisms $\bk S_n\to\End(X^{\otimes n})$, so that $\Hom(Y,X^{\otimes n})$ becomes a $S_n$-representation via composition.

We recall some definitions from \cite{CF1}. For a permutation group $G$, a \emph{$\bV$-functor} is a linear symmetric monoidal functor of the form
$$
\Theta:\Rep G\to\cV ,
$$
for some tensor category $\cV$, that annihilates objects of the form $\Ind_H^G\unit$, for all non-transitive subgroups $H<G$.

Now assume that the permutation group $G$ is defined as a subgroup of $S_{p^r}$, for some $r\ge1$. To any $\bV$-functor $\Theta$ as above, and any tensor category $\cC$, we can associate the additive symmetric monoidal functor 
$$
\Phi_{\cC}\: \cC
\xrightarrow{X\mapsto X^{\o p^r}}
    \cC\boxtimes\Rep S_{p^r}
\xrightarrow{\cC\boxtimes\Res} 
    \cC\boxtimes\Rep G
\xrightarrow{\cC\boxtimes\Theta}
    \cC\boxtimes\cV .
$$
Here, we view the $p^r$-th tensor power as an object in $\cC$ with an action of $S_{p^r}$ by permuting tensor factors. 
It was shown in \cite[Section~5]{CF1} that such functors intertwine tensor functors in a certain way, as a generalisation of this property for Ostrik's Frobenius functor in \cite{Os, EOf, tann}. The term $\bO$-functor was coined in \cite{CF1} for such functors. Such functors need not be exact, but when they are, they are tensor functors up to the comments in Remark~\ref{rem:twistlin}.

Assume that $\cC$ is a finite tensor category with subcategory $\cP_\cC$ of projective objects. Then using the description from \Cref{sec:def-tc}  of the Deligne tensor product $\cC\boxtimes\cV$ as a functor category $\cP_{\cC}^{\op}\to \cV$, we can think of the above $\bO$-functors alternatively as functors 
\begin{equation}\label{eq:reinterpretPhi}
\Phi_{\cC}:\cC\to \cC\boxtimes \cV, \quad
X\mapsto \Theta\circ\Hom(-,X^{\o p^r}).
\end{equation}
Here $\Hom(-,X^{\o p^r})$ is viewed as a functor $\cP_\cC^{\op}\to\Rep G$, using the permutation action of $G<S_{p^r}$ on tensor factors. This more explicit form unfortunately obscures the monoidal structure.

As a special case, take $G=C_p<S_p$, and take $\Theta\:\Rep G\to\Ver_p$, the semisimplification functor. Then the above construction recovers Ostrik's Frobenius functor
\begin{equation}\label{eq:classicFr}\Fr:\cC\to \cC\boxtimes\Ver_p,\quad X\mapsto \Fr(X)=\bigoplus_{i=1}^{p-1}F_i(X)\boxtimes L_i,\end{equation}
where $F_i(X)=\frac{\ker(d)\cap\im(d^{i-1})}{\ker(d)\cap\im(d^i)}$, for $d$ the action of a generator of $C_p$ minus the identity on $X^{\o p}$. Using the same formula, we also set $F_p(X)=\im(d^{p-1})$.

Given an $\bO$-functor $\Phi$, we say that a tensor category $\cC$ is $\Phi$-exact when $\Phi_{\cC}$ is exact, and thus a tensor functor. In particular $\Fr$-exactness is the same as `Frobenius exactness'.


\section{The refined Steinberg splitting conjecture} \label{sec:St}

In this section, we prove \Cref{eq:wet-spaghetti} and \Cref{conj:CF}. The proof strategy is as follows: First, it will turn out to be useful to consider all (not just $S=St_{r-1}$ in the statement) representations of elementary abelian $p$-groups as representations of the additive group. Specifically, by this change of perspective, we will be able to use certain operators (called $(c_i)_i$ in \Cref{sec:St-Cpr}) that are very natural for representations of the additive group. These operators define a generalised cohomology on representations, along which tensor products with (lifts of) Steinberg representations are shown to split in \Cref{sec:St-poly}. This implies \Cref{conj:CF}.

In more detail, the statement of \Cref{conj:CF} requires (up to conjugacy) an embedding $C_p^r<(\bk,+)<\SL_2(\bk)$, see \ref{sec:inSL2}. We thus have resulting monoidal functors
\[\Tilt \SL_2\to\Rep\mG_a\to\Rep C_{p}^r.\]
Denote by $\mG_{a,j}$ the $j$-th Frobenius kernel of $\mG_a$. The group algebra is given by \begin{equation}\label{eq:kGar}\bk \mG_{a,j}\cong\bk[z_0,z_1,\ldots, z_{j-1}]/(z_0^p,\ldots,z_{j-1}^p)\cong \bk\mG_{a,j-1}\otimes\bk[z_{j-1}]/z_{j-1}^p.\end{equation}
We can filter $\Rep\mG_a$ by full (non-monoidal) subcategories 
\[\Vect=(\Rep\mG_a)_0\subset (\Rep\mG_a)_1\subset (\Rep\mG_a)_2\subset\cdots,\]
where $(\Rep\mG_a)_j$ contains the representations that have polynomial degree less than $p^j$. The monoidal restriction functors give rise to $\bk$-linear equivalences
\[\Rep \mG_{a,r}\xleftarrow{\sim}(\Rep\mG_a)_r\xrightarrow{\sim}\Rep C_p^r.\]
Moreover, the restriction of $St_{r-1}$ to $\mG_a$ corresponds to the projective cover of $\unit$ in $(\Rep\mG_a)_{r-1}$. 

To understand tensor products with $St_{r-1}$ in $\Rep C_p^r$ it thus suffices to understand tensor products of objects in $(\Rep\mG_a)_r$ with $St_{r-1}$, and these are objects in $(\Rep\mG_a)_{r+1}$. Subsequently, it is sufficient to understand tensor products in $\Rep \mG_{a,r+1}$ of objects in $\Rep \mG_{a,r}$, where we have a non-monoidal inclusion $\Rep \mG_{a,r}\subset\Rep \mG_{a,r+1}$ by letting $z_r$ act as zero, see \eqref{eq:kGar}, with $\bk\mG_{a,r-1}$. 

Using \eqref{eq:kGar}, an obvious class of $\mG_{a,r}$-representations are given by modules of the form
\[M=\bigoplus_{i=1}^p M_i\boxtimes J_i\]
where $M_i\in\Rep\mG_{a,r-1}$ and $J_i$ is the indecomposable $\bk[z_{r-1}]/z_{r-1}^p$-module of dimension $i$. In general, for $V\in\Rep\mG_{a,r}$, we can set 
$$
W_j(V) := \ker (z_{r-1}|_V) \cap \im ((z_{r-1}|_V)^{j-1}) ,\quad
B_j(V) := W_j(V)/ W_{j+1}(V) 
\quad \in \Rep\mG_{a,r-1}.
$$
Clearly $B_j(M)\cong M_j$, for $M$ as above. The main technical result (\Cref{prop:splitting}) is now that 
\[\bk \mG_{a,r-1}\otimes V\;\cong\; \bigoplus_{j=1}^p\bk \mG_{a,r-1} \otimes (B_j(V)\boxtimes J_j)\]
 inside $\Rep\mG_{a,r+1}$. Using Nakayama's lemma it is actually sufficient to establish a morphism that is an isomorphism modulo the Jacobson radical. Constructing this morphism is the difficult part, and relies on a brute force construction.

\subsection{Normal form of Steinberg tensor products} \label{sec:normal-tp} Our first goal is to derive, in \Cref{cor:T}, a description of a functor given by tensoring with representations that, under a suitable identification, will correspond to Steinberg representations.

\begin{definition} Let $\Pol:=\Rep\mG_a$ be the tensor category of finite-dimensional rational (or `polynomial') representations over $\kk$ of the additive group $\mG_a=(\bk,+)$. We think of its objects\footnote{Here, a pair $(V,g)$ corresponds to the coaction of $\kk[\mG_a]=\kk[x]$ given by $v\mapsto \sum_{i\ge0}g_i(v)\o x^i$. On $K$-points, where $K$ is any (possibly infinite-dimensional) commutative $\kk$-algebra, it corresponds to the action of $(K,+)$ given by $\alpha\mapsto g(\alpha)$.} as pairs $(V,g)$, where $V$ is a vector space over $\kk$ and $g=\sum_{i\ge0} g_i T^i$ is an element in $\End_\kk(V)[T]$, i.e., a matrix polynomial in one indeterminate $T$, satisfying
$$
g(0)=1=\id_V ,\quad
g(T+S)=g(T) g(S) ,
$$
as polynomials, where $S$ is another indeterminate.
The \emph{degree} of a polynomial representation $(V,g)$ is the degree $\deg(g)$ of $g$ in $T$. For any $r\ge0$, let $\Pol_r=(\Rep\mG_a)_r$ be the full subcategory of $\Pol$ on objects $(V,g)$ of degree less than $p^r$.
\end{definition}

In the above description, the tensor product of $(V,g)$ and $(W,h)$ is given by $(V\otimes W,g\otimes h)$, where $g\otimes h\in \End(V\otimes W)[T]$ is based on the Kronecker product of matrices. 

At the same time, polynomial representations as in the definition are nothing but tuples of commuting $p$-nilpotent operators\footnote{... which essentially means thinking of them as $\operatorname{Dist}\mG_a$-modules.}: For any vector space $V$ and $x=(x_0,\dots,x_{r-1})\in\End_\kk(V)^r$ a tuple of pairwise commuting $p$-nilpotent operators, define
$$
g^{\{x\}} := \prod_{0\le i\le r-1} \exp(x_i T^{p^i})\quad
\in\End_\kk(V)[T] ,     
\quad
\text{with }
\exp(x) := \sum_{0\le k\le p-1} \frac{x^k}{k!} .
$$
We will also use the notation
$$
x^{[a]} := \prod_{0\le i\le r-1} \frac{x_i^{a_i}}{a_i!} \quad
\tforall 0\le a< p^r ,
$$
using the $p$-adic expansion $a=a_0+pa_1+\dots+ p^{r-1} a_{r-1}$. We set $x^{[a]}=0$ for $a\ge p^r$. In particular,
$$
g^{\{x\}} = \sum_{0\le a<p^r} x^{[a]} T^a
$$
Set 
$$
A_r := \kk[z_0,\dots,z_{r-1}]/(z_0^p,\dots,z_{r-1}^p) ,
$$
let $A_r\lmod$ denote its category of finite-dimensional modules. Its objects can be thought of as pairs $(V,x)$, where $V$ is a vector space and $x$ is an $r$-tuple of pairwise commuting $p$-nilpotent operators on $V$ as before. 

In the following, $\Psi_r$ is the restriction $\Rep\mG_a\to\Rep\mG_{a,r}$ to the Frobenius kernel, expressed in terms of the algebra $A_r$ (which can be identified with the group algebra of $\mG_{a,r}$).

\begin{lemma} \label{lem:equivalences} 
For any $r\ge0$, there are exact functors 
$$
\Psi_r\:\Pol\to A_r\lmod, \quad
(V,g) \mapsto (V,(g_{p^0},\dots,g_{p^{r-1}})) ,
$$
$$
\text{and}\quad
\Psi^-_r\: A_r\lmod\to\Pol, \quad (V,x)\mapsto (V,g^{\{x\}}), 
$$
that both commute with the faithful linear forgetful functors to vector spaces and 
that induce an equivalence of linear categories $\Pol_r \xrightarrow\simeq A_r\lmod$.
\end{lemma}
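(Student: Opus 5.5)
The proof has four parts: check that both functors are well defined, check exactness and compatibility with forgetful functors, show that $\Psi_r\circ\Psi^-_r$ is the identity, and show that $\Psi^-_r\circ\Psi_r$ is the identity on $\Pol_r$.

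\emph{Well-definedness.} For $\Psi_r$, expand $g(T+S)=g(T)g(S)$ coefficientwise. This gives
\[
g_ig_j=\binom{i+j}{i}g_{i+j},
\]
so all the $g_i$ commute. In particular the $g_{p^k}$ commute pairwise. Taking $i=(p-1)p^k$ and $j=p^k$ gives $\binom{p^{k+1}}{p^k}\equiv 0 \pmod p$ by Lucas' theorem. Combined with $g_{p^k}^{m}=m!\,g_{mp^k}$ for $m<p$ (again Lucas), this yields $g_{p^k}^p=0$. A morphism of polynomial representations commutes with every $g_i$, hence with the chosen ones, so $\Psi_r$ is a functor.

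For $\Psi^-_r$, use that for a $p$-nilpotent operator $y$ the truncated exponential satisfies $\exp(yT)\exp(yS)=\exp(y(T+S))$. Indeed, the coefficient of $T^aS^b$ with $a+b\ge p$ is $y^{a+b}/(a!\,b!)=0$. In characteristic $p$ we also have $(T+S)^{p^i}=T^{p^i}+S^{p^i}$. Since the $x_i$ commute, the product over $i$ is then multiplicative, and $g^{\{x\}}(0)=1$. Morphisms commuting with the $x_i$ commute with $g^{\{x\}}$.

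\emph{Exactness and forgetful functors.} Both functors do not change the underlying vector space or linear map, so they commute with the forgetful functors. Exactness follows, since exactness in both module categories is detected on underlying vector spaces.

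\emph{$\Psi_r\circ\Psi^-_r=\id$.} Write $g^{\{x\}}=\sum_a x^{[a]}T^a$. The coefficient of $T^{p^k}$ is $x^{[p^k]}=x_k$. Hence $\Psi_r\Psi^-_r(V,x)=(V,x)$. Also $\deg g^{\{x\}}<p^r$, so $\Psi^-_r$ lands in $\Pol_r$.

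\emph{$\Psi^-_r\circ\Psi_r=\id$ on $\Pol_r$.} This is the main step. Take $(V,g)$ with $\deg g<p^r$. I will show $g_a=g^{[a]}$, meaning
\[
g_a=\prod_k \frac{g_{p^k}^{a_k}}{a_k!},
\]
using the $p$-adic digits $a_k$ of $a$. By the relation $g_ig_j=\binom{i+j}{i}g_{i+j}$, the product $g_{a_0}\,g_{pa_1}\cdots$ equals $g_a$ times a product of binomial coefficients. When adding numbers with disjoint $p$-adic digits there are no carries, so by Lucas these coefficients are all $1$. Next, $g_{p^k}^{a_k}=a_k!\,g_{a_kp^k}$, by iterating the same relation, with Lucas giving the coefficient $a_k!\not\equiv 0$. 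Together these give $g_a=g^{[a]}$ for all $a<p^r$. Since $\deg g<p^r$, the coefficients with $a\ge p^r$ vanish, so $g=g^{\{\Psi_r g\}}$.

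Both functors act as the identity on morphisms, so they restrict to mutually inverse linear equivalences $\Pol_r\simeq A_r\lmod$.
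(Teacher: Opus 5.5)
Your proof is correct and follows essentially the same route as the paper. Both derive $g_ig_j=\binom{i+j}{i}g_{i+j}$ from additivity and use Lucas' theorem to obtain commutativity, $p$-nilpotence of the $g_{p^k}$, and the factorisation $g_a=\prod_k g_{p^k}^{a_k}/a_k!$; the paper gets the factorisation via $g_{p^m}g_{i-p^m}=i_m g_i$, you via the carry-free digit decomposition, and you spell out steps the paper leaves implicit (the truncated-exponential identity, exactness, and the two composites).
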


\begin{proof} Starting with a polynomial representation $(V,g)$, additivity of $g$ means that for all $i,j\ge0$,
$$
g_i g_j
= [g(T)g(S)]_{T^i S^j}
= [g(T+S)]_{T^i S^j} 
= g_{i+j} \binom{i+j}{i} .
$$
Since the right-hand side is symmetric with respect to swapping $i\leftrightarrow j$, this shows that all $g_i$ commute pairwise. Also, for any $m\ge0$,
$$
g_{p^m}^p = g_{p^{m+1}} (\dots) \binom{p^{m+1}}{p^m} = 0 ,
$$
omitting more binomial coefficients, and using Lucas' theorem for the last one. Hence, $g_{p^m}$ is $p$-nilpotent, implying that $\Psi_r$ is well-defined. The first displayed formula also implies that for any $m\ge0$ and $i=\sum_{k\ge0} i_k p^k$ with $i_m>0$,
$$
g_{p^m} g_{i-p^m} =g_i \binom{i}{p^m} = g_i i_m ,
$$
again using Lucas' theorem. This implies that, if $\deg(g)<p^s$, then $g = g^{\{x\}}$ for $x=(g_{p^0},\dots,g_{p^{s-1}})$. In particular, for $(V,g)\in\Pol_r$ we may take $s=r$.

For any tuple $x$ of commuting $p$-nilpotent operators, $g^{\{x\}}(0)=1$ and additivity of $g^{\{x\}}$ in the indeterminate follows from that of the polynomials $\exp(x_i T^{p^i})$, implying $\Psi^-_r$ is well-defined. If $g=g^{\{x\}}$, then indeed $g_{p^i}=x_i$, completing the proof. 
\end{proof}

For any $r\ge1$, note that the subcategories $\Pol_r\subset\Pol$ fail to be closed under tensor products. We want to control this failure at least for tensor products involving the regular representation in $A_{r-1}\lmod$, i.e., under the above equivalence, the polynomial representation
$$
(A_{r-1}, g^{\{z\}})\in \Pol_{r-1}\subset\Pol_r ,\quad
\text{with } z:=(z_0,\dots,z_{r-2}) ,
$$
viewed as a tuple of commuting $p$-nilpotent endomorphisms of $A_{r-1}$. The offset in the indices is intended; working with $A_{r-1}$ in $\Pol_r$ will lead us to the phenomenon relevant for our purposes. $A_{r-1}$ will correspond to a (higher) Steinberg representation under a suitable identification. 

We start with some auxiliary notions.

\begin{definition} For any vector space $V$, $r\ge1$, and $(r-1)$-tuples $x,y$ of commuting $p$-nilpotent operators on $V$ such that operators from $x$ and from $y$ always commute pairwise, let $x*y$ and $C(x,y)$ be the first $(r-1)$ entries and the last entry, respectively, of the $r$-tuple corresponding by \Cref{lem:equivalences} to the polynomial representation $(V,g^{\{x\}} g^{\{y\}})$, i.e.~such that
\begin{equation}
g^{\{x\}} g^{\{y\}} = g^{\{x*y\}} \exp(T^{p^{r-1}} C(x,y)) . 
\label{eq:star}
\end{equation}
\end{definition}

The above symbolically means
$$
\Psi_r((V, g^{\{x\}}g^{\{y\}})) = (V, (x\ast y, C(x,y))) ,
$$
using the concatenation of tuples on the right-hand side.
Note that the product $g^{\{x\}} g^{\{y\}}$ has degree less than $p^r$, as required.

\begin{lemma} \label{lem:involution} For all $x,y$ as above, $C(x,y)^2=0$, and we have 
$$
(x*y)*(-y) = x ,\quad
C(x*y,-y) = -C(x,y) .
$$    
\end{lemma}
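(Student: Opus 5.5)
Set $g=g^{\{x\}}$, $h=g^{\{y\}}$, $c=C(x,y)$ and $N=p^{r-1}$. Every operator in sight lies in the commutative algebra generated by the entries of $x$ and $y$, so all products below commute; this is used freely.

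\emph{Step 1: the identities.} We have $g^{\{-y\}}=\prod_i\exp(-y_iT^{p^i})$. Each $y_i$ is $p$-nilpotent, so $\exp(y_iT^{p^i})\exp(-y_iT^{p^i})=1$ by the truncated exponential identity in characteristic $p$. Hence $g^{\{y\}}g^{\{-y\}}=1$.

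Multiplying \eqref{eq:star} by $g^{\{-y\}}$ gives
\[
g^{\{x\}}=g^{\{x*y\}}g^{\{-y\}}\exp(T^{N}c).
\]
Applying \eqref{eq:star} to the pair $(x*y,-y)$ turns this into
\[
g^{\{x\}}=g^{\{(x*y)*(-y)\}}\exp\bigl(T^{N}(C(x*y,-y)+c)\bigr).
\]
The right-hand side is the polynomial representation associated by \Cref{lem:equivalences} to the $r$-tuple $\bigl((x*y)*(-y),\,C(x*y,-y)+c\bigr)$. For the second factor to equal $g^{\{(x*y)*(-y),\,C(x*y,-y)+c\}}$ I need exponential additivity for $c$ and $C(x*y,-y)$. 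Both are $p$-nilpotent (as the last entries of $\Psi_r$), they commute, and once Step 2 gives $c^2=0$ and likewise $C(x*y,-y)^2=0$, their sum is $p$-nilpotent too, so the identity holds.

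The left-hand side corresponds to the tuple $(x,0)$. By the uniqueness in \Cref{lem:equivalences} (which reads off coefficients at $T^{p^i}$), the tuples agree. This gives $(x*y)*(-y)=x$ and $C(x*y,-y)=-c$.

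\emph{Step 2: $c^2=0$, the main obstacle.} I would extract $c$ explicitly as the coefficient of $T^{N}$. In $g^{\{x*y\}}\exp(T^Nc)$, every monomial of $g^{\{x*y\}}$ has degree less than $N$. So the coefficient of $T^N$ is $c+(x*y)^{[N]}=c$, and therefore
\[
c=\sum_{a+b=N,\ 0<a,b<N} x^{[a]}y^{[b]}.
\]
Each $x^{[a]}$ with $0<a<N$ lies in the ideal $I_x$ generated by $x_0,\dots,x_{r-2}$, and similarly $y^{[b]}$ lies in $I_y$.

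One might hope $c$ is a sum of products with high total $x$-degree, but this fails: $x_0^{p-1}y_0\cdots$ has small $x$-degree. Instead I would exploit additivity of the two-variable polynomial $g^{\{x\}}(T)g^{\{y\}}(T)$ directly. Comparing coefficients of $T^NS^N$ in $F(T+S)=F(T)F(S)$, where $F=g^{\{x*y\}}\exp(T^Nc)$, gives
\[
\binom{2N}{N}\cdot[\text{coeff of }T^{2N}\text{ in }F]=c^2 .
\]
If $p>2$, then $\binom{2N}{N}\equiv\binom{2}{1}\neq 0$ by Lucas. The coefficient of $T^{2N}$ in $F$ is $c^2/2$ plus products of $(x*y)^{[a]}$ with $c$ and lower terms. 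Computing this coefficient is exactly where I would expect to get stuck.

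The cleaner route is the universal case. Take $x,y$ generic in $A_{r-1}\otimes A_{r-1}$, and use grading by multidegree. Assign $x_i$ and $y_i$ degree $p^i$; then $c$ is homogeneous of degree $N$. So $c^2$ has degree $2N$ and must be built from monomials $x^{[a]}y^{[b]}$ with $a+b=2N$ and $a,b<N$ as $p$-adic digit vectors. The digits add with carries, but $a,b\le N-1$ forces $a+b\le 2N-2<2N$, so no such monomial exists. Hence $c^2$, expanded in the basis of monomials $\prod x_i^{a_i}y_i^{b_i}$ with all exponents below $p$, vanishes. The care needed is that products of basis monomials may collapse via $x_i^p=0$. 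Such a collapse only gives zero, so it does not matter, and every surviving monomial of weighted degree $2N$ must have $a$ or $b\ge N$, which is impossible.

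Finally, specialise from the universal case to arbitrary commuting $x,y$ by the homomorphism $A_{r-1}\otimes A_{r-1}\to\End(V)$. This completes the proof.
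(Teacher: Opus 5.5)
Your proof is essentially correct, but one justification in Step 1 is wrong for $p=2$. Your proof of $C(x,y)^2=0$ also follows a different, though closely related, route from the paper's.

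\textbf{Step 1.} You merge $\exp(T^N C(x*y,-y))\exp(T^N c)$ into $\exp\bigl(T^N(C(x*y,-y)+c)\bigr)$, claiming additivity for commuting square-zero operators with $p$-nilpotent sum. For $p=2$ this fails. The truncated exponential is $1+u$, so for commuting $a,b$ with $a^2=b^2=0$ one has $\exp(T^Na)\exp(T^Nb)=1+T^N(a+b)+T^{2N}ab$. This is not $\exp(T^N(a+b))=1+T^N(a+b)$ unless $ab=0$, and nothing gives you $C(x*y,-y)\,c=0$ in advance.

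The repair is immediate and makes Step 1 independent of Step 2. You only need the coefficients of $T^{p^i}$ for $0\le i\le r-1$. Since $g^{\{(x*y)*(-y)\}}$ has degree $<N$ and constant term $1$, these coefficients of the right-hand side are $((x*y)*(-y))_i$ for $i<r-1$ and $C(x*y,-y)+c$ for $i=r-1$, with no additivity needed. The paper sidesteps the issue by multiplying \eqref{eq:star} by $g^{\{-y\}}\exp(-T^Nc)$. Then only $\exp(u)\exp(-u)=1$ is used, and both sides are $g^{\{\cdot\}}$ of $r$-tuples.

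\textbf{Step 2.} For $C(x,y)^2=0$, the paper uses $p$-nilpotency if $p=2$. For $p>2$ it notes that the $T^{2N}$-coefficient of $g^{\{x*y\}}\exp(T^Nc)$ is exactly $c^2/2$, with no cross terms since $(x*y)^{[N]}=0$. Meanwhile $g^{\{x\}}g^{\{y\}}$ has degree at most $2N-2$.

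The route you abandoned was one observation short of this. Read off $F_{2N}$ from the factorisation $g^{\{x\}}g^{\{y\}}$, where it is visibly $0$. Your identity $\binom{2N}{N}F_{2N}=c^2$ then gives $c^2=0$ for every $p$ at once.

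Your grading argument is a valid alternative resting on the same degree bound: the exponent of $T$ in $x^{[a]}y^{[b]}T^{a+b}$ is exactly its weighted degree. You apply it to $c^2$ itself via the explicit formula $c=\sum_{a+b=N}x^{[a]}y^{[b]}$. This is uniform in $p$ and never divides by $2$, at the cost of an explicit expansion.

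The universal algebra is not needed. In $\End_\kk(V)$, each term $x^{[a]}x^{[a']}y^{[b]}y^{[b']}$ of $C(x,y)^2$ already vanishes. A product $x^{[a]}x^{[a']}$ without digit carries has $a+a'\le N-1$, and a carry kills it; likewise for $y$. This is incompatible with $a+a'+b+b'=2N$.
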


\begin{proof} If $p=2$, then $C(x,y)^2=0$ immediately. Otherwise, $C(x,y)$ has to square to zero as $g^{\{x\}} g^{\{y\}}$ is of degree less than $2p^{r-1}$.
Multiplying \Cref{eq:star} with $g^{\{-y\}} \exp(-T^{p^{r-1}} C(x,y))$ and using 
$$
g^{\{y\}} g^{\{-y\}} = 1 = \exp(T^{p^{r-1}}C(x,y)) \exp(-T^{p^{r-1}}C(x,y))
$$
implies the remaining assertions.
\end{proof}

\begin{proposition} \label{lem:tensor-product} For any $r\ge1$ and $(V,g)\in\Pol_r$ the tensor product representation on $A_{r-1}\o V$ is isomorphic as a polynomial representation to the one given by
$$
g^{\{x\}} \exp(T^{p^{r-1}} D_1) (1+T^{p^r} D_2) ,
$$    
using the definitions 
$$
x:=(z_i\o\id)_{0\le i\le r-2} ,\quad
y:=(\id\o g_{p^i})_{0\le i\le r-2}, \quad
R:=-C(x, -y) ,
$$
$$
D_1:= (\id\o g_{p^{r-1}})+R
,\quad
D_2:= -(\id\o g_{p^{r-1}})^{p-1} R
\quad\in\End_\kk(A_{r-1}\o V) .
$$
\end{proposition}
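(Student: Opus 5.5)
The plan is to first rewrite the tensor product representation in the form $g^{\{x*y\}}\exp(T^{p^{r-1}}(\cdots))(1+T^{p^r}\cdots)$, and then conjugate by an explicit automorphism $\theta$ of $A_{r-1}\o V$ which turns $x*y$ into $x$.

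\textbf{Step 1 (expanding the product).} By construction the tensor product representation on $A_{r-1}\o V$ is $g^{\{x\}}(\id\o g)$. Put $G:=\id\o g_{p^{r-1}}$. By the proof of \Cref{lem:equivalences} we have $g=g^{\{(g_{p^0},\dots,g_{p^{r-1}})\}}$, and hence $\id\o g=g^{\{y\}}\exp(T^{p^{r-1}}G)$. Applying \eqref{eq:star} gives
\[
g^{\{x\}}(\id\o g)=g^{\{x*y\}}\exp(T^{p^{r-1}}C)\exp(T^{p^{r-1}}G),\qquad C:=C(x,y).
\]
All operators here commute pairwise, since $x*y$ and $C$ are polynomial expressions in $x$ and $y$. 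By \Cref{lem:involution} we have $C^2=0$. A direct expansion of truncated exponentials then gives
\[
\exp(aC)\exp(aG)=\exp(a(C+G))\bigl(1+a^p\,\epsilon\, G^{p-1}C\bigr),
\]
where $\epsilon$ is an explicit constant coming from $1/(p-1)!=-1$. The discrepancy sits only in degree $p$ in $a=T^{p^{r-1}}$, that is, in $T$-degree $p^r$. This already produces the shape $g^{\{x*y\}}\exp(T^{p^{r-1}}(G+C))(1+T^{p^r}\epsilon G^{p-1}C)$.

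\textbf{Step 2 (untwisting).} By definition, the tuple $x*y$ is the diagonal action of $\bk\mG_{a,r-1}\cong A_{r-1}$ on $A_{r-1}\o V$, i.e.\ $\Psi_{r-1}$ of the tensor product. I would use the standard Hopf-algebra untwisting isomorphism $\theta(a\o v)=\sum a_{(1)}\o S(a_{(2)})v$, written concretely as
\[
\theta=\sum_{0\le a<p^{r-1}}\bigl(\text{coefficient of }T^a\text{ in }g^{\{x\}}\bigr)\cdot(\id\o g^{\{-y\}}\text{-coefficients}),
\]
that is, the map induced by multiplying with $g^{\{-y\}}$ evaluated against the $A_{r-1}$-coordinate. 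This map intertwines $x*y$ with $x$ (left multiplication). It commutes with $G$, because $G$ commutes with $y$ and acts only on $V$.

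\textbf{Step 3 (transporting $C$).} Conjugating by $\theta$ then sends $G$ to $G$ and $G^{p-1}C$ to $G^{p-1}\theta C\theta^{-1}$. The key remaining identity, and the step I expect to be the main obstacle, is
\[
\theta\, C(x,y)\,\theta^{-1}=-C(x,-y)=R .
\]
I would prove it via \Cref{lem:involution}. Conjugation by $\theta$ is designed so that $\theta\, h(x*y,y)\,\theta^{-1}=h(x,y')$ for a transformed tuple $y'$, and the relation $(x*y)*(-y)=x$ should identify $y'$ with $-y$ up to the $*$-twist. Hence $\theta C(x,y)\theta^{-1}=\theta\bigl(-C(x*y,-y)\bigr)\theta^{-1}=-C(x,-y)$. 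If this bookkeeping becomes unwieldy, my fallback is a direct check on the generating vectors $1\o v$, using that both sides are $A_{r-1}$-linear with respect to $x$ after conjugation.

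Substituting Step 3 into Step 1 gives $D_1=G+R$ and $D_2=\epsilon G^{p-1}R$. A final sign check shows $\epsilon=-1$, matching the claimed $D_2=-(\id\o g_{p^{r-1}})^{p-1}R$.
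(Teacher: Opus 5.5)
Your proposal is correct and is essentially the paper's proof: your untwisting map $\theta$ is exactly the paper's $\phi_-$ (with inverse $\phi_+$). The only differences are that you merge the exponentials before conjugating rather than after, and that you apply \Cref{lem:involution} as $C(x,y)=-C(x*y,-y)$ before conjugating rather than computing $C(x*(-y),y)=R$ afterwards. Your Step 3 is not really an obstacle, and no transformed tuple $y'$ is needed: since $\theta$ commutes with $\id\o g_{p^i}$ and conjugates $x*y$ to $x$, it sends the polynomial expression $C(x*y,-y)$ directly to $C(x,-y)$, with $-y$ left unchanged.
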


\begin{proof} By definition, and using \Cref{lem:equivalences} and \Cref{lem:involution}, the tensor product polynomial representation on $A_{r-1}\o V$ is given by
$$
G(T) := g^{\{x\}} g^{\{y\}} \exp(T^{p^{r-1}} (\id\o g_{p^{r-1}}))
= g^{\{x*y\}} (1+T^{p^{r-1}} C(x,y)) \exp(T^{p^{r-1}} (\id\o g_{p^{r-1}}))
.    
$$

Consider the linear endomorphisms $\phi_\pm$ of $A_{r-1}\o V$ given by
$$
\phi_\pm( z^{[a]}\o v) = (x*(\pm y))^{[a]} (1\o v) \quad
\tforall  0\le a< p^{r-1}.
$$
A computation using \Cref{lem:involution} shows that $\phi_-=\phi_+^{-1}$. Conjugation with $\phi_-$ turns $G(T)$ into
$$
g^{\{(x*(-y))*y\}} (1+T^{p^{r-1}} C(x*(-y),y)) \exp(T^{p^{r-1}} (\id\o g_{p^{r-1}}))
= g^{\{x\}} (1+T^{p^{r-1}} R) \exp(T^{p^{r-1}} (\id\o g_{p^{r-1}}))
,
$$
again using \Cref{lem:involution}.
For any two commuting operators $P_1,P_2$ with $P_1^2=0=P_2^p$ and an indeterminate $S$, we compute
$$
(1+S P_1)\exp(S P_2) = \exp(S(P_1+P_2)) (1-S^p P_1 P_2^{p-1}) ,
$$
using $(p-1)!=-1$. This shows that the conjugate of $G(T)$ above is the asserted polynomial.
\end{proof}

We can reformulate \Cref{lem:tensor-product} as follows:

\begin{corollary} \label{cor:T} Set
$
\ol A_{r+1} := A_{r+1}/(z_{r-1} z_r,z_r^2)
$.
There is a diagram of exact functors, commutative up to natural isomorphism,
$$
\begin{tikzcd}
A_r\lmod 
\ar[r, "\simeq"] 
\ar[d, "\cT", swap] 
& \Pol_r 
\ar[r, "A_{r-1}\o-"] 
& \Pol_{r+1} 
\ar[d, "\simeq"]
\\
 \ol A_{r+1}\lmod 
\ar[rr]
&& A_{r+1}\lmod
\end{tikzcd} ,
$$
with the equivalences from \Cref{lem:equivalences}, the natural inclusion functor between the module categories of $\ol A_{r+1}$ and $A_{r+1}$, and a functor $\cT$ given by
$$
\cT(X)= A_{r-1}\o X,
$$
where $z_0,\dots,z_{r-2}$ act only on the first tensor factor and the action of $z_{r-1}, z_r$ is given by $D_1,D_2$ as in \Cref{lem:tensor-product}.
\end{corollary}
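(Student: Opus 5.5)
The corollary is a repackaging of \Cref{lem:tensor-product}, so the plan is to check three things: $\cT$ is well defined, the square commutes up to natural isomorphism, and everything is exact.

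\textbf{Well-definedness of $\cT$.} For $X=(V,x)\in A_r\lmod$, the corresponding polynomial representation is $(V,g)=\Psi_r^-(X)$ with $g=g^{\{x\}}$. Hence $g_{p^i}=x_i$ for $i\le r-1$, so $D_1,D_2$ are expressed through $z$ and $x$. We must show that $z_0\o\id,\dots,z_{r-2}\o\id,D_1,D_2$ are pairwise commuting $p$-nilpotent operators satisfying $D_1D_2=0=D_2^2$. This follows from \Cref{lem:tensor-product} itself. The polynomial $g^{\{z\o\id\}}\exp(T^{p^{r-1}}D_1)(1+T^{p^r}D_2)$ is conjugate to $G(T)$, so it is additive and of degree $<p^{r+1}$. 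By \Cref{lem:equivalences}, its coefficients at $T^{p^i}$ therefore form a commuting $p$-nilpotent tuple.

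These coefficients are $z_i\o\id$ for $i\le r-2$ and $D_1$ at $T^{p^{r-1}}$. At $T^{p^r}$ the coefficient is $D_2$: the term $D_1^p/p!$ does not appear, since $\exp$ is truncated and $D_1^p=0$. The expansion also forces relations on the higher coefficients. Comparing with $g^{\{w\}}$ for the tuple $w$ read off at the $T^{p^i}$, the coefficient of $T^{p^{r-1}+p^r}$ gives $D_1D_2=D_2D_1$ (no contradiction). The relations $D_2^2=0$ and $D_1D_2=0$ can be checked directly:
\begin{itemize}
\item $D_2^2$ contains $R^2=C(x,-y)^2=0$ by \Cref{lem:involution}.
\item $D_1D_2=-(\id\o x_{r-1})^pR-(\id\o x_{r-1})^{p-1}R^2=0$, because $x_{r-1}^p=0$ and $R^2=0$.
\end{itemize}
This uses that $R$ commutes with $\id\o x_{r-1}$, which follows from $C$ being built from $x$ and $y$. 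So $\cT(X)$ is an $\ol A_{r+1}$-module. Functoriality in $X$ holds because $D_1,D_2$ are given by universal polynomial expressions in $z\o\id$ and $\id\o x_i$, which commute with $\id\o f$ for any module map $f$.

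\textbf{Commutativity.} Going around the top and right of the square sends $X$ to $\Psi_{r+1}$ applied to $A_{r-1}\o\Psi_r^-(X)$. \Cref{lem:tensor-product} gives an isomorphism $\phi_-$ from this tensor product to the polynomial representation displayed there, and its image under $\Psi_{r+1}$ is exactly $\cT(X)$. The one point to verify is naturality. The conjugating map $\phi_\pm$ is defined by $z^{[a]}\o v\mapsto(x*(\pm y))^{[a]}(1\o v)$, again a universal expression in operators of the form $\id\o x_i$ and $z_j\o\id$, so it commutes with $\id\o f$.

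\textbf{Exactness.} $\cT$ is $A_{r-1}\o-$ on underlying vector spaces, and the forgetful functors are faithful and exact, so $\cT$ is exact. The remaining functors in the square are exact by \Cref{lem:equivalences} or obviously.

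The main obstacle is verifying the defining relations of $\ol A_{r+1}$, specifically $D_1D_2=0$ and $D_2^2=0$, together with naturality of $\phi_-$. Both reduce to the commutation of $R$ with $\id\o x_{r-1}$ and to $R^2=0$.
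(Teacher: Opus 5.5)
Your proposal is correct and is essentially the paper's argument. The paper gives no separate proof and presents the corollary simply as a reformulation of \Cref{lem:tensor-product}; you supply the routine checks it leaves implicit, namely the relations $D_1D_2=0=D_2^2$ (from $R^2=0$ and $[R,\id\o x_{r-1}]=0$), naturality of $\phi_\pm$ in $X$, and exactness. As a small aside, $D_1D_2=0$ also follows at once from the degree bound $\deg G\le p^r+p^{r-1}-2$, since the coefficient of $T^{p^{r-1}+p^r}$ in the conjugated polynomial is exactly $D_1D_2$.
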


\subsection{Steinberg splitting for polynomial representations} \label{sec:St-poly} Our goal is to derive an even more concrete description of the functor $A_{r-1}\o-$ on $\Pol_r\simeq A_r\lmod$, using the information from \Cref{cor:T} as a starting point. More precisely, we will see that objects of the form $\cT(X)$ as above can be decomposed as direct sums according to a suitable generalised cohomology on the objects $X$. This direct sum decomposition will be in the full subcategory on objects which are images under $\cT$. Hence, \Cref{cor:T} tells us that the decomposition can be detected and constructed in $\ol A_{r+1}\lmod$.

We fix $r\ge1$ and set $A:=A_{r-1}$. To define the modules in the sought after decomposition and describe morphisms to them, we need some notation. Let 
$$
A\:\Vect\to A\lmod
$$
be the free $A$-module functor, i.e., $A(V)=A\o V$ with $A$ acting on the first factor. We abuse notation by using the same symbol for a $\kk$-algebra and a functor.
For any $A$-module $X$ (in particular, for the restriction of an $A_r$-module), set $x:=(z_i|_X)_{0\le i\le r-2}$ and
$$
R_X := -C((z_i\o\id)_i, (-\id\o x_i)_i) = [-g^{\{z\}}(T)\o g^{\{x\}}(-T)]_{T^{p^{r-1}}} \quad\in \End_A(A(X)) ,
$$
which is the operator $R$ from \Cref{lem:tensor-product}.
For any $N\in A\lmod$ and any $1\le j\le p$, set $J_j:=\kk v_1\oplus\dots\oplus\kk v_j$ and let 
$$
N \boxtimes J_j \quad\in A_r\lmod
$$
be the vector space $N\o J_j$ with  $z_0,\dots,z_{r-2}$ acting on the first factor and $z_{r-1}$ acting on the second factor $J_j$ as a nilpotent Jordan block, by sending $v_j\mapsto 0$ and $v_k\mapsto v_{k+1}$ for $1\le k\le j-1$.

Our goal will be to show that objects of the form $\cT(Y)$ for $Y\in A_r\lmod$ are direct sums of objects of the form $\cT(N\boxtimes J_j)$, for suitable $N\in A\lmod$. Our first step is hence to parametrize morphisms between these kinds of objects. The idea is that such morphisms can be written with coefficient maps for $v_1,\dots,v_j$. Intertwining the action of $z_0,\dots,z_r$ is equivalent to a system of concrete conditions on these coefficient maps that can be solved explicitly. 

\begin{lemma} \label{lem:jordan}
For $Y\in A_r\lmod$, $B\in A\lmod$, and $1\le j\le p$, we define
$$
H(Y,B,j) := \{f: Q^j (f)=0 = f R_Y A((z_{r-1}|_Y)^{p-1}) - R_B Q^{p-1} (f)\} \quad
\subset \Hom_A(A(Y), A(B)) ,
$$
with $Q(f):=f A(z_{r-1}|_Y) + f R_Y - R_B f$. Then we have an isomorphism
$$ \Hom_{\ol A_{r+1}}(\cT(Y), \cT( B\boxtimes J_j) ) \cong H(Y,B,j) , \quad
h\mapsto \pi h ,
$$
where $\pi\:A\o B\o J_j\to A\o B$ projects to the coefficient of $v_j$.
\end{lemma}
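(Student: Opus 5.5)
The plan is to unwind what a morphism $h\colon\cT(Y)\to\cT(B\boxtimes J_j)$ of $\ol A_{r+1}$-modules is. By \Cref{cor:T}, $\cT(Y)=A\o Y$ and $\cT(B\boxtimes J_j)=A\o B\o J_j$. Here $z_0,\dots,z_{r-2}$ act on the factor $A$. The element $z_{r-1}$ acts by $D_1$ and $z_r$ by $D_2$, as in \Cref{lem:tensor-product}.

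On the target, $\id\o g_{p^{r-1}}$ is $\id_{A(B)}\o N$ with $N$ the Jordan block on $J_j$. Moreover $R$ for $B\boxtimes J_j$ equals $R_B\o\id_{J_j}$, since $R$ only involves $z_0,\dots,z_{r-2}$ acting on $B$. So on the target we get $D_1=R_B\o 1+1\o N$ and $D_2=-R_B\o N^{p-1}$. On the source, $D_1=A(z_{r-1}|_Y)+R_Y$ and $D_2=-A(z_{r-1}|_Y)^{p-1}R_Y$.

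Since $R_Y$ and $R_B$ are $A$-linear, commuting with $z_0,\dots,z_{r-2}$ means exactly that $h$ is $A$-linear. Write $h=\sum_{k=1}^j h_k\o v_k$ with $h_k\in\Hom_A(A(Y),A(B))$. So $h_j=\pi h$.

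Next I impose $hD_1=D_1h$. The $v_{k}$-component reads $h_kD_1^Y=R_Bh_k+h_{k-1}$, with $h_0:=0$. This gives the recursion $h_{k-1}=Q(h_k)$, where $Q(f)=fA(z_{r-1}|_Y)+fR_Y-R_Bf$. Hence $h_k=Q^{j-k}(h_j)$ for all $k$. The component $k=0$ gives the constraint $Q^j(h_j)=0$.

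Then I impose $hD_2=D_2h$. The term $D_2h$ is $-\sum_k R_Bh_k\o N^{p-1}v_k$, and $N^{p-1}v_k$ is nonzero only if $j=p$ and $k=1$, where it gives $v_p$. So $D_2h=-R_Bh_1\o v_p$ when $j=p$, and $D_2h=0$ otherwise. Also $R_B$ and $h_1$ are $A$-linear, while $D_2$ commutes with $D_1$. I therefore expect $hD_2$ to be concentrated in a single component as well. Its $v_j$-component is $-h_jA(z_{r-1}|_Y)^{p-1}R_Y$, and the remaining components follow from the $D_1$-equivariance.

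Comparing gives $h_jR_YA(z_{r-1}|_Y)^{p-1}=R_BQ^{p-1}(h_j)$ when $j=p$, where $h_1=Q^{p-1}(h_j)$. When $j<p$, we have $Q^{p-1}(h_j)=0$ because $Q^j(h_j)=0$, so the same formula still holds. Here I use that $R_Y$ commutes with $A(z_{r-1}|_Y)$, as operators from $x$ and $y$ commute.

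This bookkeeping of the $D_2$ condition across components is the main obstacle. In particular I must check that the other components of $hD_2-D_2h$ vanish automatically, using $h_k=Q^{j-k}(h_j)$ and $D_1D_2=D_2D_1$. If that fails, I would instead verify the condition directly, using $R^2=0$ from \Cref{lem:involution}.

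With these conditions established, $h\mapsto\pi h$ is injective because $h$ is determined by $h_j$. It lands in $H(Y,B,j)$. It is surjective because, given $f\in H(Y,B,j)$, the map $h=\sum_k Q^{j-k}(f)\o v_k$ satisfies all the equations by reversing the computation.
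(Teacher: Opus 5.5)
Your proposal is correct and follows the paper's proof. You expand $h=\sum_k h_k\o v_k$, get $h_k=Q^{j-k}(h_j)$ and $Q^j(h_j)=0$ from the $z_{r-1}$-condition, and reduce the $z_r$-condition to a comparison of $v_j$-components, using $R_BQ^{p-1}(h_j)=0$ for $j<p$. The step you flag closes as you suggest: because $D_1D_2=D_2D_1$ on both modules, $hD_2-D_2h$ is again $A$-linear and $D_1$-equivariant, so by your recursion all its components are $Q^{j-k}$ of its $v_j$-component and vanish once that one does. The paper states the same fact as $Q$ commuting with right multiplication by $z_r|_M$ and with left multiplication by $R_B$.
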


\begin{proof} Set $M:=\cT(Y)$ and $N:=\cT(B\boxtimes J_j)$. By \Cref{lem:tensor-product}, $M$ is an $\ol A_{r+1}$-module with underlying vector space $A\o Y$, with $z_0,\dots,z_{r-2}$ acting on the first factor $A$, and with
$$
z_{r-1}|_M = R_Y + A(z_{r-1}|_Y) ,\quad
z_r|_M = -R_Y A(z_{r-1}|_Y)^{p-1} .
$$
Similarly, $N$ is an $\ol A_{r+1}$-module with underlying vector space $A\o B\o J_j$, with $z_0,\dots,z_{r-2}$ acting on the first factor $A$, and with
$$
z_{r-1}|_N = R_B\o\id + \id\o\id\o d ,\quad
z_r|_N = -R_B\o d^{p-1} ,
$$
where $d$ is the action of the nilpotent Jordan block on $J_j$ in its chosen basis.
For any $f\in\Hom_\kk(M,N)$, we can write
$$
f = \sum_{1\le a\le j} f_a \o v_a \quad
\text{with } f_a \in \Hom_\kk(M,A\o B) .
$$
$f$ intertwining the action of $z_0,\dots,z_{r-2}$ is equivalent to $f_a\in\Hom_A(M,A(B))$ for all $a$. $f$ intertwining the action of $z_{r-1}$ is equivalent to
$$
f_1 z_{r-1}|_M = R_B f_1 ,\quad
f_a z_{r-1}|_M = R_B f_a + f_{a-1} \quad
\tforall 2\le a\le j ,
$$
which is equivalent to $f_a=Q^{j-a}(f_j)$, for all $a$, and $Q^j(f_j)=0$, which is one of the conditions in the definition of $H(Y,B,j)$. The maps $f$ described by such coefficient maps $(f_a)_a$ are $A$-linear if and only if $f_j$ is. For them, intertwining the $z_r$-action is equivalent to
$$
f_a z_r|_M = \begin{cases} 0 & a<p \\ -R_B f_1 = -R_B Q^{p-1} (f_j) & a=p \end{cases} \quad
\tforall 1\le a\le j .
$$
As $Q(\cdot)$ commutes with $(\cdot z_r|_M)$ and with left multiplication by $R_B$, and $Q^j(f_j)=0$, this is equivalent to
$$
f_j R_Y A(z_{r-1}|_Y)^{p-1} = \begin{cases} 0 = R_B Q^{p-1} (f_j) & j<p \\ R_B Q^{p-1} (f_j) & j=p \end{cases} .
$$
\end{proof}

Recall that for any $Y\in A_r\lmod$, we want to show that $\cT(Y)$ is direct sum of objects of the form $\cT(B\boxtimes J_j)$. More precisely, the object $B$ corresponding to an index $j$ should be given by a certain generalized cohomology $B_j(Y)$, as follows. The generalised cohomologies $B_j$ were studied in \cite[Section~3]{EOf}.

\begin{definition} \label{def:B} For $Y\in A_r\lmod$, we define for all $j\ge1$: 
$$
W_j(Y) := \ker (z_{r-1}|_Y) \cap \im ((z_{r-1}|_Y)^{j-1}) ,\quad
B_j(Y) := W_j(Y)/ W_{j+1}(Y) 
\quad \in A_{r-1}\lmod=A\lmod.
$$
\end{definition}

To obtain the projections $\cT(Y)\tto \cT(B_j(Y)\boxtimes J_j)$, we want to use \Cref{lem:jordan}. Hence, we need an element in the parametrising space $H(Y,B_j(Y),j)$. We can build such an element by starting with a morphism of the form $A(\beta)$, where $\beta\: Y\to B_j(Y)$ is a choice of a linear morphism extending the defining quotient map $W_j(Y)\tto B_j(Y)$. $A(\beta)$ itself might not lie in $H(Y,B_j(Y),j)$, but we will find a correction morphism $\ell$ that achieves this, in the following sense. 

\begin{lemma} \label{lem:correction} Fix $1\le j\le p$. For $Y\in A_r\lmod$, set $d:=z_{r-1}|_Y$. Assume that there is a linear map $\beta\:Y\to B:=B_j(Y)$ such that
\begin{equation}
\beta|_{W_j(Y)} = (W_j(Y)\tto B) ,\quad
\beta d^j = 0 ,\quad
R_B A(\beta) R_Y = 0 .
\label{eq:correction-beta}    
\end{equation}
Assume also that there is an $A$-linear map $\ell\:A(Y)\to A(B)$ such that
\begin{equation}
\ell R_Y = R_B \ell ,\quad
\ell A(d^j) = A(\beta j d^{j-1}) R_Y - R_B A(\beta j d^{j-1}) .
\label{eq:correction-ell}    
\end{equation}
Then $\eta:=A(\beta)-\ell$ lies in $H(Y,B,j)$.
\end{lemma}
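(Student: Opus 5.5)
The plan is to verify directly the two defining conditions of $H(Y,B,j)$ for $\eta=A(\beta)-\ell$. The first is $Q^j(\eta)=0$. The second is $\eta R_Y A(d^{p-1})=R_BQ^{p-1}(\eta)$. The tool is to split $Q$ as a sum of two commuting operators on $\Hom_A(A(Y),A(B))$:
\[ Q=\rho+\delta,\qquad \rho(f):=fA(d),\qquad \delta(f):=fR_Y-R_Bf. \]
The operators $\rho$ and $\delta$ commute because $A(d)$ commutes with $R_Y$. Indeed, $R_Y$ is built from $z\otimes\id$ and $\id\otimes z_i|_Y$ with $i\le r-2$, and these commute with $d=z_{r-1}|_Y$. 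Hence the binomial formula $Q^n=\sum_k\binom nk\rho^{n-k}\delta^k$ is available.

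The key step is to compute $Q^n$ on each summand of $\eta$.
\begin{itemize}
\item For $\ell$: the first condition in \eqref{eq:correction-ell} says $\delta(\ell)=0$, so $Q^n(\ell)=\ell A(d^n)$.
\item For $A(\beta)$: first, $\delta^2(A(\beta))=A(\beta)R_Y^2-2R_BA(\beta)R_Y+R_B^2A(\beta)$. This vanishes because $R^2=0$ (\Cref{lem:involution}) and $R_BA(\beta)R_Y=0$ by \eqref{eq:correction-beta}. It follows that $Q^n(A(\beta))=A(\beta d^n)+n\,\delta(A(\beta d^{n-1}))$.
\item A useful auxiliary fact: $R_BA(\beta d^k)R_Y=R_BA(\beta)R_YA(d^k)=0$ for all $k\ge0$.
\end{itemize}
Applying the formulas with $n=j$ gives
\[ Q^j(\eta)=A(\beta d^j)+j\,\delta(A(\beta d^{j-1}))-\ell A(d^j). \]
This vanishes by $\beta d^j=0$ together with the second condition in \eqref{eq:correction-ell}.

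For the second condition I would split into the cases $j<p$ and $j=p$.

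Case $j<p$. Here $Q^{p-1}(\eta)=Q^{p-1-j}Q^j(\eta)=0$, so it remains to show $\eta R_YA(d^{p-1})=0$. Since $A(\beta)R_YA(d^{p-1})=A(\beta d^{p-1})R_Y$, this term vanishes because $\beta d^j=0$. For the other term, I rewrite $\ell R_YA(d^{p-1})$ as $R_B\,\ell A(d^j)A(d^{p-1-j})$ and substitute \eqref{eq:correction-ell}. The result involves only terms of the form $R_BA(\beta d^k)R_Y$ and $R_B^2(\cdots)$, and these vanish by the auxiliary fact and $R_B^2=0$.

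Case $j=p$. Expanding, we get
\[ R_BQ^{p-1}(\eta)=R_BA(\beta d^{p-1})-R_B\ell A(d^{p-1}). \]
The $\delta$-term drops out, again by the auxiliary fact and $R_B^2=0$. Moreover $R_B\ell=\ell R_Y$, so this equals $R_BA(\beta d^{p-1})-\ell R_YA(d^{p-1})$. On the other side,
\[ \eta R_YA(d^{p-1})=A(\beta d^{p-1})R_Y-\ell R_YA(d^{p-1}). \]
So everything reduces to the identity $A(\beta d^{p-1})R_Y=R_BA(\beta d^{p-1})$.

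This last identity is the step I expect to be the main obstacle. The hypothesis \eqref{eq:correction-ell} is useless here, since both sides vanish when $j=p$ (because $d^p=0$ and $p=0$ in $\kk$). The way around it is to identify $\beta d^{p-1}$ as an $A$-module map. We have $\im d^{p-1}\subset\ker d$, so $\im d^{p-1}=W_p(Y)$. Also $W_{p+1}(Y)=0$, so $B=B_p(Y)=W_p(Y)$, and $\beta$ restricts to the identity on $W_p(Y)$. Hence $\beta d^{p-1}=d^{p-1}\colon Y\to B$, which is $A$-linear because $z_{r-1}$ commutes with $z_0,\dots,z_{r-2}$. Finally, $R_X$ is defined functorially from the $A$-module structure of $X$ via $g^{\{x\}}$. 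It is therefore natural in $A$-module maps $\varphi$, meaning $A(\varphi)R_Y=R_BA(\varphi)$, and this gives the required identity.
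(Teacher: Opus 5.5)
Your proof is correct and follows essentially the same route as the paper. You split $Q=\rho+\delta$ and use $\delta(\ell)=0$ and $\delta^2(A(\beta))=0$ to get $Q^n(\eta)=\eta A(d^n)+n\,\delta(A(\beta))A(d^{n-1})$. You then conclude with the naturality $R_BA(\varphi)=A(\varphi)R_Y$ for $A$-linear $\varphi$, which is \Cref{lem:C-K}(d).

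The only difference is that the paper avoids your split into $j<p$ and $j=p$. It notes that $\beta d^{p-1}$ is $A$-linear for every $j$: it is $d^{p-1}$ into $W_p\subset W_j$ followed by the $A$-linear quotient onto $B_j$. Hence $\delta(A(\beta d^{p-1}))=0$ holds uniformly in $j$.
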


\begin{proof}
Set
$$
\delta(h) := h R_Y - R_B h ,\quad
Q(h) := h A(d) + \delta(h), 
\quad \tforall h\in\Hom_A(A(Y), A(B)) .
$$
Note that $Q$ as defined here agrees with $Q$ as in \Cref{lem:jordan}.

As $R_Y^2$ and $R_B^2$ are zero by \Cref{lem:involution}, using the assumptions we get
$$
\delta^2(\eta) = \delta^2(A(\beta)) = 0 .
$$
As $\delta(\cdot)$ and $(\cdot A(d))$ commute, this implies
$$
Q^j(\eta) = (\delta(\cdot) + (\cdot A(d)))^j (\eta)
= \eta A(d^j) + \delta(A(\beta)) A(j d^{j-1})
= A(\beta d^j) = 0 ,
$$
using the assumptions in the last two steps. Similarly, we compute
$$
R_B Q^{p-1}(\eta) = R_B \eta A(d^{p-1}) - R_B \delta(A(\beta)) A(d^{p-2}) 
 = \eta R_Y A(d^{p-1})
$$
using $\delta(\eta) A(d^{p-1})=0$ and $R_B \delta(\eta)=0$ in the last step; the first of these statements follows as $\im(d^{p-1})=W_p\subset W_j$ and $\delta(\eta)=\delta(A(\beta))$, and the second follows from again $\delta(\eta)=\delta(A(\beta))$ and $R_B\delta(A(\beta))=0$, using the assumptions. This completes the proof.
\end{proof}

In order to find $\beta$ and $\ell$ as in \Cref{lem:correction}, we will need some machinery.

\begin{definition}
For $(X,x),(Y,y)\in A\lmod$, we define a $\kk$-linear mapping
$$ \cK=\cK_{X,Y}\: \Hom_\kk(X,Y)\to\Hom_A(A(X),A(Y)), \quad
f\mapsto A([-g^{\{y\}}(T) f g^{\{x\}}(-T)]_{T^{p^{r-1}}}) .
$$
\end{definition}

In other words, $\cK$ is the negative coefficient of $T^{p^{r-1}}$ in the natural action of the additive group on the internal hom-space $X^*\o Y$, transported along $A(\cdot)$, and $R_X$ is one of the operators arising in \Cref{lem:tensor-product}. Note that $R_X^2=0$ by \Cref{lem:involution}.

\begin{definition}
For any $(X,x)\in A\lmod$, a \emph{weight filtration} is a descending vector space filtration
$$
X=\cF_0 X\supset \cF_1 X\supset\dots 
$$
such that $\cF_{p^{r-1}}X=0$ and $x^{[a]} (\cF_i X) \subset \cF_{i+a} X$ for all $a,i\ge0$.
\end{definition}

\begin{lemma} \label{lem:C-K} \begin{enumerate}
\item[(a)] For any composition $bfa$ of $\bk$-linear maps $a,b,f$ between $A$-modules, where $a,b$ are $A$-linear, 
$$
\cK(bfa) = A(b) \cK(f) A(a) . 
$$

\item[(b)] For any $\bk$-linear map $f\:X\to Y$ between $A$-modules,
$$
R_Y \cK(f) - \cK(f) R_X = R_Y A(f) R_X .
$$

\item[(c)] For any $\bk$-linear map $f\:X\to Y$ between $A$-modules with weight filtrations, 
$$
\cK(f) (A(\cF_i X)) \subset A(\cF_{i+1}Y) ,
$$
and if $f$ is a filtered linear map, then $\cK(f)=0$.

\item[(d)] For any $A$-linear map $f\:X\to Y$ between $A$-modules,
$$
R_Y A(f) = A(f) R_X .
$$
\end{enumerate}
\end{lemma}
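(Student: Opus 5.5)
The plan is to reduce everything to the $\bk$-linear map
$$
\kappa(f):=-[g^{\{y\}}(T)\, f\, g^{\{x\}}(-T)]_{T^{p^{r-1}}}
=-\sum_{a+b=p^{r-1}}(-1)^b\, y^{[a]}fx^{[b]},
$$
so that $\cK(f)=A(\kappa(f))$, with $A$ acting only on the first tensor factor. All four parts are then coefficient manipulations of matrix polynomials. Only the terms with $0<a,b<p^{r-1}$ contribute, because $x^{[c]}=0$ for $c\ge p^{r-1}$ (only $r-1$ operators are present).

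For (a) and (d) the key input is that an $A$-linear map intertwines $x^{[c]}$ with $y^{[c]}$ for all $c$, hence $g^{\{y\}}(\pm T)a=a\,g^{\{x\}}(\pm T)$. For (a), pull $a$ and $b$ out of the polynomial $g(T)\,bfa\,g(-T)$ before taking the coefficient. For (d), recall $R_X=-[g^{\{z\}}(T)\o g^{\{x\}}(-T)]_{T^{p^{r-1}}}$. Commuting $\id\o f$ past $g^{\{z\}}(T)\o g^{\{x\}}(-T)$ turns it into $g^{\{z\}}(T)\o g^{\{y\}}(-T)$, and comparing $T^{p^{r-1}}$-coefficients gives $R_YA(f)=A(f)R_X$.

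For (c), I would first choose vector space splittings $X\cong\bigoplus_i\gr_iX$ and $Y\cong\bigoplus_i\gr_iY$ compatible with the weight filtrations. With respect to these, each $x^{[c]}$ only has components raising degree by at least $c$, and likewise for $y^{[c]}$. Decompose an arbitrary linear $f$ into homogeneous components $f_k$ of degree $k$. Since both filtrations vanish in degree $p^{r-1}$, only $k\ge -(p^{r-1}-1)$ occur. Now a term $y^{[a]}f_kx^{[b]}$ with $a+b=p^{r-1}$ sends $\cF_iX$ into $\cF_{i+p^{r-1}+k}Y$. If $k\ge -(p^{r-1}-1)$, this lies in $\cF_{i+1}Y$, which gives the first claim. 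If $f$ is filtered, then $k\ge0$, so the term lands in $\cF_{\ge i+p^{r-1}}Y=0$, and hence $\cK(f)=0$.

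Part (b) is the main obstacle, because $f$ is not $A$-linear and cross terms appear. Set $F(T):=g^{\{y\}}(T)fg^{\{x\}}(-T)$, so $\kappa(f)=-F_{p^{r-1}}$. I would start from the exact identity
$$
(\id\o f)\bigl(g^{\{z\}}(T)\o g^{\{x\}}(-T)\bigr)=\bigl(g^{\{z\}}(T)\o g^{\{y\}}(-T)\bigr)\bigl(\id\o F(T)\bigr),
$$
which follows from $g^{\{y\}}(-T)g^{\{y\}}(T)=1$, and compare the coefficients of $T^{p^{r-1}}$ on both sides. The extreme terms produce $-A(f)R_X$, $-R_YA(f)$ and $\cK(f)$. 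The remaining cross terms are sums over $0<i,j$ with $i+j=p^{r-1}$ of products of a lower coefficient of $g^{\{z\}}(T)\o g^{\{y\}}(-T)$ with a lower coefficient of $F$.

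To control the cross terms I would use \eqref{eq:star}. Below degree $p^{r-1}$, the polynomial $g^{\{z\}}(T)\o g^{\{y\}}(-T)$ coincides with the genuinely multiplicative $g^{\{z*(-y)\}}(T)$, and its top coefficient is exactly $-R_Y$; similarly for $X$. Likewise, the lower coefficients of $F$ form the degree $<p^{r-1}$ part of the action of $\mG_a$ on $\Hom(X,Y)$. The hope is that, after conjugating by the automorphisms $\phi_\pm$ from the proof of \Cref{lem:tensor-product}, the cross terms reassemble into $-\cK(f)R_X$ plus $R_YA(f)R_X$, which would yield $R_Y\cK(f)-\cK(f)R_X=R_YA(f)R_X$.

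If this bookkeeping becomes unwieldy, my fallback is a deformation argument. Conjugate the $A$-module $X\oplus Y$ by the unipotent automorphism $\left(\begin{smallmatrix}1&0\\ f&1\end{smallmatrix}\right)$, apply (d) to this $A$-linear map between $X\oplus Y$ and its conjugate, and read off the off-diagonal block. The relation $R^2=0$ from \Cref{lem:involution} should account for the quadratic term $R_YA(f)R_X$.
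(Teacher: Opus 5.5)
Your parts (a), (c) and (d) are fine. For (c), your graded-splitting argument is a harmless variant of the direct estimate: the term $y^{[a]}fx^{[q-a]}$, with $q:=p^{r-1}$, kills $\cF_iX$ when $a\le i$, and otherwise lands in $y^{[a]}Y\subset\cF_{i+1}Y$.

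\textbf{Gap in (b).} Comparing $T^{q}$-coefficients in your identity gives only a first-order relation. It expresses $\cK(f)$ as $A(f)R_X-R_YA(f)$ plus cross terms. A product of $R_Y$ with $\cK(f)$ cannot appear there: it would require pairing the $T^q$-coefficient of $g^{\{z\}}(T)\o g^{\{y\}}(-T)$ with $F_q$, which contributes only to $T^{2q}$. So there is nothing for the cross terms to ``reassemble'' into, and the appeal to $\phi_\pm$ is not an argument.

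Your fallback is vacuous. Let $M'$ be $X\oplus Y$ with the conjugated action $u(x\oplus y)u^{-1}$. Then $g^{\{x'\}}=u\,g^{\{x\oplus y\}}u^{-1}$, so $R_{M'}=A(u)R_MA(u)^{-1}$, and (d) for $u$ is a tautology. Its off-diagonal block just reads $A(f)R_X=A(f)R_X$, and $\cK(f)$ never enters.

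\textbf{The missing step.} Use \eqref{eq:star} as an exact factorisation, not merely as agreement below degree $q$. (Incidentally, $-R_Y$ is the $T^q$-coefficient of $g^{\{z\}}(T)\o g^{\{y\}}(-T)$, not its top coefficient.) With $w_X:=(z_i\o\id)_i*(-\id\o x_i)_i$ and $w_Y$ defined similarly, one has
\[
g^{\{z\}}(T)\o g^{\{x\}}(-T)=g^{\{w_X\}}(T)(1-T^qR_X),\qquad g^{\{z\}}(T)\o g^{\{y\}}(-T)=g^{\{w_Y\}}(T)(1-T^qR_Y).
\]
Here $g^{\{w\}}$ has degree $<q$ and inverse $g^{\{w\}}(-T)$. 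All these operators are polynomials in the commuting operators $z_i\o\id$, $\id\o x_i$ (resp.\ $\id\o y_i$), so they commute with $R_X$ (resp.\ $R_Y$). Multiply your identity on the left by $g^{\{w_Y\}}(-T)$ and on the right by $1+T^qR_X$, using $R_X^2=0$. This gives
\[
g^{\{w_Y\}}(-T)\,A(f)\,g^{\{w_X\}}(T)=(1-T^qR_Y)\,A(F(T))\,(1+T^qR_X).
\]
Now compare coefficients of $T^{2q}$. The left-hand side has degree at most $2q-2$, and so does $F$, so both the left side and $F_{2q}$ vanish. Since $F_0=f$ and $A(F_q)=-\cK(f)$, you get
\[
0=R_Y\cK(f)-\cK(f)R_X-R_YA(f)R_X,
\]
which is exactly (b). This is the paper's argument.
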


\begin{proof} (a) follows directly from the definition of $\cK$.

(b) We set $q:=p^{r-1}$,
$$
x' := (-\id_A\o x_i)_i ,\quad
y' := (-\id_A\o y_i)_i ,\quad
z'_X := (z_i\o \id_X)_i ,\quad z'_Y := (z_i\o \id_Y)_i ,
$$
where $x=(x_i)_i$ and $y=(y_i)_i$ are the $(r-1)$-tuples specifying the $A$-action on $X$ and $Y$.  
Then by \Cref{eq:star}, 
$$
g^{\{z'_X\}} g^{\{x'\}} = g^{\{z'_X*x'\}} (1-T^q R_X) ,\quad
g^{\{y'\}} g^{\{z'_Y\}} = g^{\{z'_Y*y'\}} (1-T^q R_Y) ,
$$
hence,
$$
g^{\{z_Y'*y'\}}(-T) A(f) g^{\{z'_X*x'\}}(T) = (1-T^q R_Y) \underbrace{ g^{\{y'\}}(-T) A(f) g^{\{x'\}}(T) }_{=:K(T)} (1+T^q R_X) .
$$
Taking coefficients of $T^{2q}$ and using $[K]_{T^q}=-\cK(f)$ produces the desired formula.

(c) Up to scalar extension $A(\cdot)$, $\cK(f)$ is a linear combination of terms of the form $y^{[a]} f x^{[q-a]}$, for $1\le a\le q-1$. Such a term vanishes on $\cF_i X$ for $a\le i$, as $\cF_q=0$. The remaining terms send $\cF_i X$ to $\cF_{i+1} Y$. If $f$ is filtered, then all terms send $\cF_i X$ to $\cF_q Y=0$.

(d) With $x,y$ as above, $A$-linearity means
$$
f g^{\{x\}}(T) = g^{\{y\}}(T) f .
$$
Evaluating at $-T$, tensoring with $A$, that is with $g^{\{z\}}$, and taking the coefficient of $T^{q}$ yields the assertion.
\end{proof}

Set
$$
\mfrak := (z_0,\dots,z_{r-2}) \quad \subset A ,
$$
the maximal ideal of codimension $1$. Hence, for any $f\:A(X)\to A(Y)$ in $A\lmod$, we can consider the reduction modulo $\mfrak$, a linear map $\ol{f}\:X\to Y$. Note that $\ol{R_X}=0$, as $\deg(g^{\{x\}})<p^{r-1}$. 

We say that a map $f\:X\to Y$ between filtered spaces $X,Y$ with descending filtrations \emph{strictly raises the filtration} if $f(\cF_{i}X)\subset\cF_{i+1}Y$ for all $i$. 

\begin{lemma} \label{lem:extension} Consider $A$-modules with weight filtrations $X,Y,B$, and a filtered $A$-linear map $\partial\:X\to Y$. Let $f\:X\to B$ be a filtered linear map whose restriction to $\ker(\partial)$ is $A$-linear. Then there is an $A$-linear map $\ell\:A(Y)\to A(B)$ satisfying
$$
\ell R_Y=R_B\ell,\quad
\ell A(\partial)=A(f)R_X-R_B A(f),
$$
such that the reduction $\ol\ell\:Y\to B$ factors through $Y/\im(\partial)$ and strictly raises the filtration.
\end{lemma}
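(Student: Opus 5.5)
The plan is to build $\ell$ in two stages. First, $\ell$ is forced on the free submodule $A(\im\partial)$. Then it is extended to all of $A(Y)$, and the extension is corrected using the operator $\cK$ and \Cref{lem:C-K} until it commutes with the $R$'s. Write
$$\Delta(f):=A(f)R_X-R_BA(f)\in\Hom_A(A(X),A(B)).$$
Unwinding the definition of $R_X,R_B$ via $g^{\{z\}}(T)\o g^{\{x\}}(-T)$, one sees that
$$\Delta(f)=\bigl[-g^{\{z\}}(T)\o\bigl(fg^{\{x\}}(-T)-g^{\{b\}}(-T)f\bigr)\bigr]_{T^{q}},\qquad q=p^{r-1}.$$
So $\Delta(f)$ only depends on the failure of $f$ to be $A$-linear, and it vanishes when $f$ is $A$-linear (this is \Cref{lem:C-K}(d)).

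\emph{Step 1: definition on $A(\im\partial)$.} Since $A(-)=A\o-$ is exact, $\ker A(\partial)=A(\ker\partial)$ and $A(\im\partial)=\im A(\partial)$. I would set $\ell_0(A(\partial)u):=\Delta(f)(u)$. For this to be well defined, $\Delta(f)$ must kill $A(\ker\partial)$.
\begin{itemize}
\item By \Cref{lem:C-K}(d) applied to $\partial$, $R_X$ commutes with $A(\partial)$, so $R_X$ preserves $A(\ker\partial)$.
\item Write $\ker\partial\hookrightarrow X$ as the $A$-linear inclusion $\iota$. Then $f\iota$ is $A$-linear, and $\Delta(f)A(\iota)=\Delta(f\iota)=0$, again by (d).
\end{itemize}
Hence $\ell_0\colon A(\im\partial)\to A(B)$ is a well-defined $A$-linear map with $\ell_0A(\partial)=\Delta(f)$.

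The relation $\ell_0R_Y=R_B\ell_0$ on $A(\im\partial)$ comes from two facts: $R_Y A(\partial)=A(\partial)R_X$, and
$$\Delta(f)R_X=-R_BA(f)R_X=R_B\Delta(f),$$
which uses $R_X^2=R_B^2=0$ from \Cref{lem:involution}. Moreover $\ol{\Delta(f)}=0$ because $\ol{R_X}=\ol{R_B}=0$, so the reduction of $\ell_0$ vanishes. Filtration control: $\Delta(f)$ is built from terms $b^{[a]}fx^{[q-a]}$ with $0<a<q$, exactly as in the proof of \Cref{lem:C-K}(c). So it strictly raises weight filtrations when $f$ is filtered, and so does $\ell_0$ relative to the image filtration, since $\partial$ is filtered.

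\emph{Step 2: naive extension and correction.} Choose a filtered linear projection $\pi\colon Y\to\im\partial$, compatible with the filtrations, which is possible over a field. Let $\ell_1:=\ell_0A(\pi)$. Then $\ell_1$ is $A$-linear and extends $\ell_0$, its reduction vanishes (hence factors through $Y/\im\partial$), and it strictly raises the filtration. What can fail is $\ell_1R_Y=R_B\ell_1$. Its defect is $e:=\ell_0\bigl(A(\pi)R_Y-R_YA(\pi)\bigr)$. This vanishes on $A(\im\partial)$ and, by \Cref{lem:C-K}(b), is expressible through $\cK(\pi)$ and $R_YA(\pi)R_Y$-type terms.

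I would then add correction terms $\ell=\ell_1+\sum_k\cK(h_k)$, with $h_k\colon Y\to B$ linear maps vanishing on $\im\partial$. Adding $\cK(h)$ changes the commutator with $R$ by $R_BA(h)R_Y$, by \Cref{lem:C-K}(b). It preserves $\ell A(\partial)$, since $\cK(h)A(\partial)=\cK(h\partial)=0$ by (a). It does not change the reduction, since $\cK(h)$ reduces to a combination of terms $b^{[a]}hy^{[q-a]}$ with $0<a<q$, which still kill $\im\partial$ once $h$ is chosen suitably; by (c), $\cK(h)$ also strictly raises the filtration. I would solve the commutation equation degree by degree in the weight filtration. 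Each correction strictly raises the filtration, and $\cF_q=0$, so the procedure terminates after at most $q$ steps.

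The main obstacle is this last step: showing that at each stage the remaining defect has the form $R_BA(h)R_Y$ plus something of strictly higher filtration degree, with $h$ killing $\im\partial$. I expect to attack it by writing $e$ explicitly, using $R_Y^2=0$ to show $e$ is in the image of $h\mapsto R_BA(h)R_Y$ modulo higher filtration. If this turns out messy, the fallback is to define $\ell$ directly by a closed formula $\ell=\ell_0A(\pi)+\cK(\text{something built from }f\text{ and }\pi)$ and verify both identities using (a)–(d) and $R^2=0$.
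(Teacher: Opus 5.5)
Your Step~1 is sound, and it reproduces the paper's first move. Take $t\colon Y\to X$ with $\partial t\partial=\partial$ and with $\partial t$ and $t\partial$ filtered. With $\pi=\partial t$, your $\ell_1=\ell_0A(\pi)$ is exactly $\Delta(f)A(t)$. There is one slip. Since $R_B\Delta(f)=+R_BA(f)R_X$, the identity $\Delta(f)R_X=R_B\Delta(f)$ does not follow from $R^2=0$ alone. You need $R_BA(f)R_X=0$, which follows from $\cK(f)=0$ for filtered $f$ together with \Cref{lem:C-K}(b),(c).

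The genuine gap is Step~2, and it is more than unfinished: the class of corrections you allow is too small. Since $\cK$ is linear, $\sum_k\cK(h_k)=\cK(h)$. Writing $\delta(H):=HR_Y-R_BH$, you need $R_BA(h)R_Y=\delta(\ell_1)=\ell_1R_Y$, and this can be unsolvable. Here is an example.
\begin{itemize}
\item Take $p=3$ and $r=2$, so $A=\kk[z_0]/(z_0^3)$ and $q=3$.
\item Let $X=Ae$ and $Y=Au$ be free, with $\cF_i=z_0^i(-)$.
\item Let $B=\kk c_1\oplus\kk c_2$ with $z_0c_1=c_2$ and $z_0c_2=0$, and set $\cF_0B=\cF_1B=B$, $\cF_2B=\kk c_2$, $\cF_3B=0$.
\item Set $\partial(e)=z_0^2u$, and $f(e)=0$, $f(z_0e)=c_1$, $f(z_0^2e)=c_2$.
\end{itemize}
All hypotheses hold: $\ker\partial=z_0X$, and $f$ is $A$-linear there. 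For each module $M$ one has $R_M=z_0\otimes m^2-z_0^2\otimes m$ with $m=z_0|_M$. Hence $R_BA(h)R_Y=z_0^2\otimes (z_0^2|_B)\,h\,(z_0^2|_Y)=0$ for every $h$, because $z_0^2B=0$. Yet for every linear projection $\pi\colon Y\to\im\partial=\kk z_0^2u$ one computes $R_B\ell_1=0$ and $\ell_1R_Y(1\otimes u)=z_0^2\otimes c_2\neq0$. So neither an iteration with terms $\cK(h_k)$ nor a closed formula of the form $\ell_0A(\pi)+\cK(\cdot)$ can produce $\ell$.

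The missing idea is a correction term that is not in the image of $\cK$, namely $A(f)\cK(t)$ with $t$ as above.
\begin{itemize}
\item Because $A(\partial)\cK(t)=\cK(\partial t)=0$, the map $\cK(t)$ lands in $A(\ker\partial)$, where $A(f)R_X=R_BA(f)$.
\item Combined with \Cref{lem:C-K}(b), this gives $\delta(A(f)\cK(t))=-A(f)R_XA(t)R_Y$. Also $\delta(\cK(ft))=-R_BA(ft)R_Y$.
\item These two exactly cancel $\delta(\Delta(f)A(t))=A(f)R_XA(t)R_Y-R_BA(ft)R_Y$.
\end{itemize}
Thus
\[
\ell=\Delta(f)A(t)+A(f)\cK(t)-\cK(ft)
\]
satisfies $\ell R_Y=R_B\ell$. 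It still satisfies $\ell A(\partial)=\Delta(f)$, because $\cK(t)A(\partial)=\cK(t\partial)=0$ and $\cK(ft)A(\partial)=\cK(ft\partial)=0$, as $t\partial$ and $ft\partial$ are filtered. Its reduction $f\,\ol{\cK(t)}-\ol{\cK(ft)}$ kills $\im\partial$ and strictly raises the filtration by \Cref{lem:C-K}(c). This reduction is typically non-zero; in the example it sends $u\mapsto c_1$. So your remark that the corrections leave the reduction unchanged is also wrong. What is needed, and true, is only that it factors through $Y/\im\partial$ and strictly raises the filtration.
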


\begin{proof} We set 
\[h:=A(f) R_X-R_B A(f):\; A(X)\to A(B).\] By our assumptions, $f$ restricted to $\ker(\partial)$ is $A$-linear, so $h(\ker A(\partial))=0$ by \Cref{lem:C-K}(d). As $f$ is assumed filtered, using \Cref{lem:C-K}(c) and (b), we get $\cK(f)=0$ and hence $R_B A(f) R_X=0$. This implies
\begin{equation}
R_B h = 0 = h R_X .  \label{eq:hom1}    
\end{equation}

We claim that we can choose a linear (split) map $t\:Y\to X$ such that $\partial t \partial=\partial$ and $\partial t$ and $t\partial$ are both filtered linear maps. Indeed, pick filtered complements $X'$ to $\ker\partial\subset X$ and $Y'$ to $\im\partial\subset Y$, then $t$ is defined by
$$
t|_{Y'} = 0 , \quad
t|_{\im\partial} = (\partial|_{X'}^{\im\partial})^{-1} ,
$$
where the upper and lower indicate (co-)restriction. This shows the claim.

Set $\ell_0:=h A(t)$. Then
$$
\ell_0 A(\partial) =  h A(t\partial) = h - h A(1-t\partial) = h,
$$
as $(1-t\partial)(X)\subset\ker\partial$.

Using \Cref{lem:C-K}(a,c), we get that the following compositions are all zero:
$$
A(\partial) \cK(t) ,\quad \cK(t) A(\partial) ,\quad
\cK(ft) A(\partial) .
$$
In particular, $\im\cK(t)\subset\ker A(\partial)$, so by our assumption
\begin{equation}
A(f) R_X \cK(t) = R_B A(f) \cK(t) . \label{eq:hom2}    
\end{equation}

Set $\ell:=\ell_0+ A(f)\cK(t)-\cK(ft)$. It is an $A$-linear map by construction and $\ell A(\partial)=h$. We claim that $\delta(\ell)=0$, for $\delta(H):=H R_Y-R_B H$. Indeed,
$$
\delta(\ell_0) 
=  A(f) R_X A(t) R_Y - R_B A(ft) R_Y
= \delta( -A(f) \cK(t) + \cK(ft) ) ,
$$
using \Cref{eq:hom1} in the first step, and using \Cref{eq:hom2} and \Cref{lem:C-K}(b) in the second step.

Finally, $\ol{\ell_0}=0$ as $\ol{R_X}$ and $\ol{R_B}$ are zero. The remaining terms of $\ell$ annihilate $\im(A(\partial))$, as noted before, and strictly raise the filtration by \Cref{lem:C-K}, as $f$ is filtered. Hence, their reductions induce maps on $Y/\im(\partial)$ that strictly raise the filtration.
\end{proof}

Having set up the machinery, we can now show a lifting result on which the desired Steinberg splitting will rest. 

For any $A_r$-module $X$, we can reduce the $A_{r+1}$-module $\cT(X)=A\o X$ modulo $\mfrak=(z_0,\dots,z_{r-2})$. As the $A$-action on $\cT(X)$ is on the first factor, this reduction can be identified with $X$ as $\kk[z_{r-1}]/(z_{r-1}^p)$-modules. Note that, as $\ol{R_X}=0$, there is no other non-trivial action on the reduction.

\begin{proposition} \label{prop:lifting} For each $Y\in A_r\lmod$ and $1\le j\le p$, there are weight filtrations of $Y$ and of $B_j:=B_j(Y)$ (as $A=A_{r-1}$-modules) together with an $\ol A_{r+1}$-morphism $F\:\cT(Y)\to\cT(B_j\boxtimes J_j)$, such that the action of $B_j(-)$ on the reduction $Y\to B_j\boxtimes J_j$ modulo $\mfrak$ is a filtered map
$$
B_j = B_j(Y)\to B_j(B_j\boxtimes J_j) \cong B_j
$$
whose associated graded is the identity. 
\end{proposition}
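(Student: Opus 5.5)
The plan is to build $F$ through \Cref{lem:jordan}, with the required element of $H(Y,B_j,j)$ supplied by \Cref{lem:correction} in the form $\eta=A(\beta)-\ell$. The data $\beta$ and $\ell$ will be produced by \Cref{lem:extension}, applied with $X=Y$, $\partial=A$-linear map $d^j$ (where $d=z_{r-1}|_Y$), and $f=\beta j d^{j-1}$.

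Since $d$ is $A$-linear, I first need weight filtrations for which $d$ and $\beta$ are filtered. The natural choice is the radical/socle-type filtration built from the operators $x^{[a]}$. I would take $\cF_iY:=\sum_{a\ge i}\im(x^{[a]})$, i.e.\ the span of images of monomials of weighted degree at least $i$. Every $A$-linear map preserves this filtration, and $x^{[a]}\cF_i\subset\cF_{i+a}$ holds by the multiplication rule $x^{[a]}x^{[b]}=\binom{a+b}{a}x^{[a+b]}$ (zero once $a+b\ge p^{r-1}$). On $B_j$ I would use the analogous filtration.

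Next I choose $\beta\colon Y\to B_j$ extending $W_j\tto B_j$ with $\beta d^j=0$ and $\beta$ filtered. Concretely, I decompose $Y$ as a $\kk[d]/d^p$-module compatibly with the filtration, which is possible by taking filtered complements degree by degree, since $d$ is filtered. I then let $\beta$ kill the complement of $W_j$ inside a suitable subspace, chosen so that $\im d^j$ lands in $W_{j+1}+{}$(complement). Once $\beta$ is filtered, \Cref{lem:C-K}(c),(b) give $\cK(\beta)=0$, and hence $R_BA(\beta)R_Y=0$. This verifies \eqref{eq:correction-beta}.

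For \Cref{lem:extension} I need $f=j\beta d^{j-1}$ to be filtered, which is automatic, and $A$-linear on $\ker d^j$. On $\ker d^j$ we have $d^{j-1}(\ker d^j)\subset W_j$, where $\beta$ is the quotient map. The quotient map is $A$-linear, because $W_j$ and $W_{j+1}$ are $A$-submodules. This gives $\ell$ satisfying \eqref{eq:correction-ell}. \Cref{lem:correction} then gives $\eta\in H(Y,B_j,j)$, and \Cref{lem:jordan} gives $F$ with $\pi F=\eta$.

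Finally I compute the reduction modulo $\mfrak$. It is the $\kk[d]$-linear map $Y\to B_j\otimes J_j$ whose $v_a$-coefficients are $\ol{Q^{j-a}(\eta)}=(\beta-\ol\ell)d^{j-a}$, using $\ol{R}=0$. Applying $B_j(-)$ and identifying $B_j(B_j\boxtimes J_j)$ with the $v_j$-coefficient on $W_j$, the induced map $B_j\to B_j$ is $[w]\mapsto(\beta-\ol\ell)(w)$ for $w\in W_j$. Here $\beta$ contributes the identity. By \Cref{lem:extension}, $\ol\ell$ strictly raises the filtration and factors through $Y/\im d^j$; this is what makes the map well defined on the quotient $B_j$, and it also shows that the correction $\ol\ell$ has zero associated graded. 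So the map is filtered with associated graded equal to the identity.

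I expect the main obstacle to be the simultaneous choice of weight filtrations and of $\beta$: it must be filtered, extend the quotient map, and satisfy $\beta d^j=0$, and $W_j\cap\im d^j=W_{j+1}\cap\ldots$ must be handled compatibly with the filtration. I would try first to prove that filtered complements exist by working with the associated graded, which is possible because all maps in play are filtered and the filtration is finite.
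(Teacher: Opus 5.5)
Your architecture is exactly the paper's: \Cref{lem:jordan} fed by \Cref{lem:correction}, fed in turn by \Cref{lem:extension} with $\partial=d^j$ and $f=j\beta d^{j-1}$, followed by the reduction computation. The proposal fails at the step you yourself flag as the main obstacle.

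\textbf{The gap.} With the ``analogous'' filtration $\cF_iB_j=\sum_{a\ge i}x^{[a]}B_j$, a filtered $\beta$ extending $W_j\tto B_j$ need not exist. Take $r\ge2$, $j=1$, and $Y=\kk u\oplus\kk w\oplus\kk v$ with $z_0u=w$, $z_{r-1}u=v$ and all other actions zero. Then $W_1=\kk w\oplus\kk v$ and $W_2=\kk v$, so $B_1=\kk[w]$ carries the trivial $A$-action and $\cF_1B_1=0$. But $w=z_0u\in\cF_1Y$ while $\beta(w)=[w]\neq0$. Filteredness of $\beta$ is needed three times downstream:
\begin{enumerate}
\item it gives $\cK(\beta)=0$, hence $R_BA(\beta)R_Y=0$;
\item it makes $f$ admissible in \Cref{lem:extension};
\item it makes $\ol\ell$ strictly raise the filtration.
\end{enumerate}
So the argument breaks at this point.

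Your suggested construction, a $\kk[d]/d^p$-decomposition of $Y$ compatible with $\cF$, is not available in general either. For $p\ge3$, let $Y$ have basis $u,du,d^2u,b$, with $d=z_{r-1}$ acting as indicated and $db=0$. Let $z_0=d+e$, where $e(u)=b$ and $e$ kills the other basis vectors; then $de=ed=0=e^2$. Here $\cF_1Y=\kk(du+b)\oplus\kk d^2u$ is a cyclic $d$-module of length $2$ not contained in $\im d$. Hence it cannot be the sum of its intersections with Jordan blocks of $d$ (of sizes $3$ and $1$).

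\textbf{The missing idea.} Filter $B_j$ extrinsically, as the paper does.
\begin{itemize}
\item Since $W_j\cap\im(d^j)=\ker d\cap\im(d^j)=W_{j+1}$, the quotient $B_j$ embeds as an $A$-submodule of $Y/\im(d^j)$.
\item Give $Y/\im(d^j)$ the quotient filtration and $B_j$ the induced subspace filtration. This is again a weight filtration, because $B_j$ is $A$-stable.
\item Because the filtration is induced, there is a filtered linear retraction $\rho\colon Y/\im(d^j)\to B_j$ (choose filtered complements step by step).
\item Then $\beta:=\rho\circ(Y\tto Y/\im(d^j))$ is filtered, satisfies $\beta d^j=0$, and restricts to the quotient map on $W_j$. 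No Jordan decomposition is needed; in the example above one gets $\cF_1B_1=B_1$.
\end{itemize}

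The induced filtration also makes your last step rigorous. Write $\ol\ell=\ell_2\ell_1$ with $\ell_1\colon Y\tto Y/\im(d^j)$. Then $\ell_2$ strictly raises the quotient filtration. Its restriction to $B_j$ strictly raises the filtration of $B_j$ precisely because $B_j$ carries the filtration restricted from $Y/\im(d^j)$. So $\id-\ell_2|_{B_j}$ has identity associated graded. With this choice of filtration and $\beta$, the rest of your argument goes through as written.
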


\begin{proof}
Set 
$$
W_j:=W_j(Y) ,\quad
y_i:=\begin{cases} z_i|_Y & i<r-1 \\ 0 & i \ge r-1 \end{cases},\quad
d:= z_{r-1}|_Y \quad 
\tforall j\ge1, i\ge0.
$$
Define $\cF_i Y := \sum_{a\ge i} y^{[a]} Y$ for all $i\ge0$. We can realise $B_j$ as a subspace of $Y/\im(d^j)$, since $\im(d^j)\cap W_j=W_{j+1}$. Let $\cF_i B_j$ denote the induced filtration on $B_j$. These are weight filtrations, and $d:Y\to Y$ is filtered.

Pick a filtered linear retraction of $B_j\hookrightarrow Y/\im(d^j)$. Pre-composing it with the quotient map yields a filtered linear map $\beta\:Y\to B_j$ such that $\beta d^j=0$ and $\beta|_{W_j}$ is the natural quotient map. Using \Cref{lem:C-K}(b,c) we hence see that $\beta$ is as assumed in \Cref{lem:correction}, i.e., satisfies \Cref{eq:correction-beta}.

To apply \Cref{lem:extension} with 
$$
\partial := d^j, \quad
f := \beta j d^{j-1},\quad
B := B_j ,
$$
we note that, indeed, $\partial$ and $f$ are filtered, $\partial$ is $A$-linear, and the restriction $f|_{\ker(\partial)}=f|_{\ker(d^j)}$ is $A$-linear, as $d^{j-1}(\ker(d^j))=W_j$, and $\beta|_{W_j}$ is the natural quotient map.
Hence, from \Cref{lem:extension} and \Cref{lem:correction}, we get an $A$-linear map $\ell\: A(Y)\to A(B)$ such that
$$
\eta:= A(\beta)-\ell \quad
\in \Hom_A(A(Y),A(B)) \cong \Hom_A(\cT(Y),A(B))
$$ lies in $H(Y,B,j)$ as defined in \Cref{lem:jordan} and whose reduction $\ol{\ell}$ modulo $\mfrak$ factors via $Y/\im(d^j)$ and strictly raises the filtration. For $j=p$, we can take $\ell=0$.

Hence, from \Cref{lem:jordan}, we get an $\ol A_{r+1}$-linear $F\:\cT(Y)\to\cT(B\boxtimes J_j)$ such that $\pi F=\eta$, where $\pi\:A\o B\o J_j\to A\o B$ projects to the coefficient of $v_j\in J_j$.

It remains to see that the reduction $\ol\eta$ of $\eta$ modulo $\mfrak$, and hence also that of $F$ by \Cref{lem:jordan}, induces the identity map on $\gr(B_j)$. Write $\ol\ell=Y\xrightarrow{\ell_1} Y/\im(d^ j)\xrightarrow{\ell_2} B$, where $\ell_2$ raises the filtration. As $\ol\eta=\beta-\ol\ell$, we see that $\ol\eta|_{W_j}$ induces the map $\id_B - \ell_2|_B$ on $B\subset Y/\im(d^j)$, which indeed has the identity as its associated graded. Modulo $\mfrak$, the maps $F A(d)$ and $A(z_{r-1}) F$ agree, and $\ol F$ sends $W_j\subset\ker(d)$ into $B\o\kk v_j$, with coefficient $\ol\eta|_{W_j}$. Under the natural identification $B_j(B\boxtimes J_j)\cong B$, it therefore induces the identity on $\gr(B_j)$.
\end{proof}

Combining \Cref{prop:lifting} for the various values of $j$ yields a first splitting result.

\begin{proposition} \label{prop:splitting} For all $Y\in A_r\lmod$, 
$$
\cT(Y) \cong \bigoplus_{1\le j\le p} \cT(B_j(Y) \boxtimes J_j) .
$$
In particular, in $\Pol$, the tensor product $A\o Y$ decomposes as a direct sum of tensor products $A\o Y_j$, where $Y_j$ corresponds to $B_j(Y)\boxtimes J_j$.
\end{proposition}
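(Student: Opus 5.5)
We assemble the morphisms from \Cref{prop:lifting} into a single map and show it is an isomorphism using a Nakayama-type argument over the local algebra $\ol A_{r+1}$. For each $1\le j\le p$, \Cref{prop:lifting} gives an $\ol A_{r+1}$-morphism $F_j\:\cT(Y)\to\cT(B_j(Y)\boxtimes J_j)$. Set
$$
F:=(F_j)_j\:\cT(Y)\to \bigoplus_{1\le j\le p}\cT(B_j(Y)\boxtimes J_j).
$$
Both sides are free $A$-modules ($A=A_{r-1}$ acts on the first tensor factor), of ranks $\dim Y$ and $\sum_j j\dim B_j(Y)$. These ranks agree: $\dim Y=\sum_j j\dim B_j(Y)$ by the Jordan decomposition of $d=z_{r-1}|_Y$, since $B_j(Y)$ counts the Jordan blocks of size $j$. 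Because $F$ is $A$-linear between free $A$-modules of equal finite rank, and $A$ is local with maximal ideal $\mfrak$, it suffices to show that the reduction $\ol F$ modulo $\mfrak$ is bijective.

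Modulo $\mfrak$, the source becomes $Y$ as a $\kk[z_{r-1}]/z_{r-1}^p$-module, and the target becomes $\bigoplus_j B_j(Y)\boxtimes J_j$ with $z_{r-1}$ acting through the Jordan blocks. The reduction $\ol F$ intertwines the $z_{r-1}$-actions, so it is a morphism of $\kk[t]/t^p$-modules. A morphism between $\kk[t]/t^p$-modules of equal dimension is an isomorphism as soon as its restriction to socles, $\ker t$, is injective. Indeed, the kernel of such a morphism is a submodule; if it were nonzero it would meet $\ker t$, so injectivity on the socle forces injectivity, and equal dimensions then give bijectivity. Here $\ker d=W_1(Y)$ carries the filtration $W_1\supset W_2\supset\cdots\supset W_p$ with subquotients $B_j(Y)$, and the target's socle carries the analogous filtration. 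Since $\ol F$ is a module map, it maps $W_j(Y)$ into $W_j$ of the target, so it induces maps on the subquotients $B_j(Y)\to B_j(\text{target})=B_j(Y)$.

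The main step is to check that these induced maps are bijective. For the $j$-th summand, \Cref{prop:lifting} says the induced map is filtered with respect to weight filtrations and has identity associated graded, so it is unipotent, hence bijective. For the summands $j'\ne j$, we have $B_j(B_{j'}(Y)\boxtimes J_{j'})=0$, so these contribute nothing on the $j$-th subquotient. Therefore the map $\gr_W(\ker d)\to\gr_W(\text{target socle})$ induced by $\ol F$ is bijective on each graded piece. Hence $\ol F|_{\ker d}$ is injective, so $\ol F$ is an isomorphism, and by Nakayama so is $F$.

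The second statement then follows from \Cref{cor:T}. That corollary identifies $\cT$, composed with the inclusion $\ol A_{r+1}\lmod\to A_{r+1}\lmod$, with $A\o-$ on $\Pol_r$. The inclusion is fully faithful, so the isomorphism above transfers to $\Pol$.
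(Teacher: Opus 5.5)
Your proposal is correct and follows essentially the same route as the paper's proof: you assemble the maps from \Cref{prop:lifting}, use the unipotent induced maps on each $B_j$ and the $W_j$-filtration of socles to get injectivity of $\ol F$, then a dimension count and Nakayama's lemma. You also spell out details the paper leaves implicit, namely the rank equality $\dim Y=\sum_j j\dim B_j(Y)$ and the vanishing of the cross terms $B_j(B_{j'}(Y)\boxtimes J_{j'})=0$ for $j'\neq j$.
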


\begin{proof} From \Cref{prop:lifting} applied to all $1\le j\le p$, we get an $\ol A_{r+1}$-linear morphism
$$
F\: \cT(Y) \to \bigoplus_{1\le j\le p} \cT(B_j(Y)\boxtimes J_j),
$$
whose reduction $\ol F$ modulo $\mfrak$ induces the identity on all $\gr(B_j(Y))$, with the appropriate corestriction on the target. Each induced map on $B_j$ is an isomorphism, since its associated graded is the identity. The filtrations by $W_j$ then show that $\ol F$ restricts to an isomorphism between the kernels of the respective $z_{r-1}$-actions. Every nonzero $z_{r-1}$-stable subspace intersects non-trivially with $\ker(z_{r-1})$, so $\ol F$ is injective. A dimension count shows that $\ol F$ is an isomorphism. Hence, by Nakayama's lemma, $F$ is an isomorphism. The implication for $\Pol$ follows from \Cref{lem:equivalences}.
\end{proof}

We can now apply \Cref{prop:splitting} recursively.

\begin{definition} For any polynomial representation $(Y,g)\in\Pol_{r-1}$ and $k\ge0$, let $(Y^{(k)},g^{(k)})\in\Pol_{r-1+k}$ be the \emph{$k$-th Frobenius twist}, that is, the polynomial representation with $Y^{(k)}=Y$ and $g^{(k)}(T)=g(T^{p^k})$.
\end{definition}

Let $J_j$ be the polynomial representation corresponding to the $A_1$-module $(\kk^j, d)$, where $d$ is the nilpotent Jordan block of size $j$. Note that this is consistent with our use of the notation $-\boxtimes J_j$ so far.

\begin{lemma} \label{lem:frob-twist} For $Y\in A_{r-1}\lmod$ and $1\le j\le p$, $Y\boxtimes J_j\cong Y\o J_j^{(r-1)}$ as polynomial representations.
\end{lemma}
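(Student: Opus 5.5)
Both sides are polynomial representations on the vector space $Y\o\kk^j$, so the plan is to compare their matrix polynomials directly and show they coincide; the isomorphism will then be the identity map. First I will pin down what $Y\boxtimes J_j$ means as a polynomial representation. As an $A_r$-module it is $Y\o J_j$, with $z_0,\dots,z_{r-2}$ acting by $y_i\o\id$, where $y=(y_0,\dots,y_{r-2})$ is the tuple from the $A_{r-1}$-action on $Y$, and with $z_{r-1}$ acting by $\id\o d$. Under $\Psi^-_r$ from \Cref{lem:equivalences}, this corresponds to the matrix polynomial
$$
g^{\{(y_0\o\id,\dots,y_{r-2}\o\id,\id\o d)\}}(T)=\Big(\prod_{0\le i\le r-2}\exp\big((y_i\o\id)T^{p^i}\big)\Big)\exp\big((\id\o d)T^{p^{r-1}}\big).
$$

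Next I will compute the right-hand side. The polynomial representation $Y\in\Pol_{r-1}$ is $(Y,g^{\{y\}})$. The representation $J_j$ is $(\kk^j,\exp(dT))$, so its Frobenius twist is $J_j^{(r-1)}=(\kk^j,\exp(dT^{p^{r-1}}))$. The tensor product uses the Kronecker product of matrix polynomials, which gives
$$
\big(g^{\{y\}}(T)\o\id\big)\big(\id\o\exp(dT^{p^{r-1}})\big).
$$

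Finally I will compare the two. The operators $y_i\o\id$ and $\id\o d$ commute, so the exponential factors of the first expression can be separated into the $Y$-part and the $J_j$-part. Moreover, $\exp(x\o\id)=\exp(x)\o\id$, because $\exp$ is the truncated sum $\sum_{k<p}x^k/k!$, which is a polynomial expression and hence compatible with $-\o\id$. The two matrix polynomials therefore agree, and the identity map of $Y\o\kk^j$ is the required isomorphism.

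There is no real obstacle here. The only point needing care is the bookkeeping convention that $\Pol_r\simeq A_r\lmod$ sends the last coordinate $z_{r-1}$ to the coefficient of $T^{p^{r-1}}$, and that the exponential is the truncated one. This is consistent with the way $J_j$ was defined as a polynomial representation just before the lemma.
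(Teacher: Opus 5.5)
Your proposal is correct and is exactly the direct computation from the definitions that the paper's one-line proof refers to. Both sides yield the matrix polynomial $\big(g^{\{y\}}(T)\o\id\big)\big(\id\o\exp(dT^{p^{r-1}})\big)$ on $Y\o\kk^j$, so the identity map is an isomorphism.
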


\begin{proof} This follows by a direct computation from the definitions.
\end{proof}

For $0\le a< p^r$ with $p$-adic expansion $a=a_0+pa_1+\dots+p^{r-1} a_{r-1}$, set
$$
J(a) :=  J_{a_0+1}\boxtimes\dots\boxtimes J_{a_{r-1}+1}
 = J_{a_0+1}^{(0)}\o\dots\o J_{a_{r-1}+1}^{(r-1)} \quad
 \in A_r\lmod\simeq\Pol_r .
$$
Here, by definition, $z_i\in A_r$ acts only on the factor $J_{a_i+1}$. 
We also set
$$
H_X(a) := B_{a_0+1}(\dots B_{a_{r-1}+1}(X)\dots)
\quad\in A_0\lmod\simeq\Vect
\quad\tforall X\in A_r\lmod\simeq\Pol_r .
$$

\begin{theorem} \label{thm:St-poly} For all $X\in\Pol_r$, 
$$
J(p^{r-1}-1) \o X \cong \bigoplus_{0\le a< p^r} (J(p^{r-1}-1)\o J(a))^{\oplus\dim H_X(a)} .
$$
\end{theorem}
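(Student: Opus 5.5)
We argue by induction on $r$, using \Cref{prop:splitting} at each step to peel off one Jordan-type factor. First we identify $J(p^{r-1}-1)$ with the regular module $A=A_{r-1}$ viewed in $\Pol_{r-1}\subset\Pol_r$. Indeed, $p^{r-1}-1$ has $p$-adic digits $(p-1,\dots,p-1,0)$, so $J(p^{r-1}-1)=J_p\boxtimes\cdots\boxtimes J_p\boxtimes J_1$, with $r-1$ factors $J_p$ and $z_{r-1}$ acting by zero. This is exactly $A_{r-1}=\bigotimes_{i<r-1}\bk[z_i]/z_i^p$, with $z_{r-1}$ trivial. Hence the left-hand side is $A\o X$. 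By \Cref{prop:splitting} (equivalently \Cref{cor:T} combined with \Cref{lem:equivalences}),
\[ A\o X\;\cong\;\bigoplus_{1\le j\le p} A\o\bigl(B_j(X)\boxtimes J_j\bigr) \]
in $\Pol$, where $B_j(X)\in A_{r-1}\lmod\simeq\Pol_{r-1}$. By \Cref{lem:frob-twist}, $B_j(X)\boxtimes J_j\cong B_j(X)\o J_j^{(r-1)}$, so each summand is $A\o B_j(X)\o J_j^{(r-1)}$.

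The key step is to decompose $A\o B_j(X)=A_{r-1}\o B_j(X)$ with $B_j(X)\in\Pol_{r-1}$. Here $A_{r-1}$ is the \emph{free} module in $\Pol_{r-1}\simeq A_{r-1}\lmod$, rather than the shifted Steinberg-type object of the previous level. We avoid strengthening the statement: since $A_{r-1}$ restricted to $\mG_{a,r-1}$ is free (regular), the tensor product $A_{r-1}\o N$ for $N\in\Pol_{r-1}$ should be computed directly. We write $A_{r-1}=J_p\boxtimes A_{r-2}^{(1)}$-type factorisations and apply \Cref{prop:splitting} at level $r-1$ (with $A_{r-2}$ and $z_{r-2}$), which yields
\[ A_{r-2}\o N\cong\bigoplus_k A_{r-2}\o\bigl(B_k(N)\boxtimes J_k\bigr). \]
We then multiply by the extra factor $J_p^{(r-2)}$ and regroup. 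Iterating down $r-1,r-2,\dots,1$ peels off $B_{a_{r-1}+1},B_{a_{r-2}+1},\dots$ successively. This produces the multiplicities $\dim H_X(a)=\dim B_{a_0+1}(\cdots B_{a_{r-1}+1}(X)\cdots)$ together with factors $\bigotimes_i J_{a_i+1}^{(i)}=J(a)$ (by \Cref{lem:frob-twist} iterated). The outer factor $A$ (which commutes past everything, by symmetry of $\o$) reassembles into $J(p^{r-1}-1)\o J(a)$. At the bottom ($A_0=\bk$, $\Pol_0=\Vect$), $H_X(a)$ is a vector space and contributes $\dim H_X(a)$ copies.

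The main obstacle is making the induction close. After the first splitting we hold $A_{r-1}\o N\o J_j^{(r-1)}$, but \Cref{prop:splitting} at level $r-1$ concerns $A_{r-2}\o N$, not $A_{r-1}\o N$. We would handle this by writing $A_{r-1}\cong A_{r-2}\o J_p^{(r-2)}$ as polynomial representations (again \Cref{lem:frob-twist}). We apply \Cref{prop:splitting} to $A_{r-2}\o N$ and check that the extra twisted factors $J_p^{(r-2)}\o J_j^{(r-1)}$ only ride along, since tensoring with them preserves isomorphisms. The resulting shape $A_{r-2}\o(\cdots)\o J_p^{(r-2)}\o\cdots$ must then be recognised as $J(p^{r-1}-1)\o J(a)$ up to reordering of factors. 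That means formulating the inductive hypothesis as: for $N\in\Pol_s$,
\[ A_{s-1}\o N\cong\bigoplus_{a<p^s}\bigl(A_{s-1}\o J(a)\bigr)^{\oplus\dim H_N(a)}. \]
This is precisely the theorem at level $s$, so the recursion is self-similar once the twisted factors are bookkept.
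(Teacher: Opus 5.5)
Your proposal is correct and matches the paper's proof. The paper also inducts on $r$: it applies \Cref{prop:splitting} and \Cref{lem:frob-twist} to get $A_{r-1}\o X\cong\bigoplus_j A_{r-1}\o B_j(X)\o J_j^{(r-1)}$, then uses $A_{r-1}\cong J(p^{r-1}-1)\cong J(p^{r-2}-1)\o J(p-1)^{(r-2)}$ together with the statement at level $r-1$ for $B_j(X)\in\Pol_{r-1}$, so no strengthening is needed; the $J_p\boxtimes A_{r-2}^{(1)}$ factorisation in your middle paragraph is not the one that makes the induction close and can simply be dropped.
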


\begin{proof} This follows immediately from \Cref{prop:splitting} using \Cref{lem:frob-twist} by an induction in $r$, noting that $A_{r-1}\cong J(p^{r-1}-1)$ and, if $r\ge2$, $J(p^{r-1}-1)\cong J(p^{r-2}-1)\o J(p-1)^{(r-2)}$.
\end{proof}

\subsection{Steinberg splitting for elementary abelian groups} \label{sec:St-Cpr}
The above results can be translated directly into results on representations of elementary abelian $p$-groups.

We continue with a fixed $r\ge1$. Let $C_p^r$ be the elementary abelian $p$-group of order $p^r$. Fix an embedding $\iota\:C_p^r\hookrightarrow(\kk,+)$. The results discussed will turn out to hold independently of this choice. For each $i\ge0$, let $L(i)$ be the restriction to $C_p^r$ of the simple $\SL_2=\SL_2(\kk)$-module with highest weight $i$ along the inclusion
$$
C_p^r \xrightarrow{\iota} (\kk,+) \xrightarrow{\alpha\mapsto \left(\begin{smallmatrix} 1&0\\\alpha&1 \end{smallmatrix}\right)} \SL_2 .
$$

We define a linear map 
$$
\phi\: \kk C_p^r \to A_r , \quad
e \mapsto g^{\{z\}}(\iota(e)) = \prod_{0\le k\le r-1} \exp(\iota(e)^{p^k} z_k) = \sum_{0\le a<p^r} \iota(e)^a z^{[a]}  \quad
\tforall e\in C_p^r .
$$

\begin{lemma} \label{lem:Er-Ar} 

(a) $\phi$ is an algebra isomorphism. 

(b) The monoidal restriction functor 
$$
\Res\:\Pol\simeq\Rep\mG_a\to \Rep C_p^r ,\quad
(V,g) \mapsto (V, e\mapsto g(\iota(e))) ,
$$
restricts on $\Pol_r$ to the equivalence of abelian categories
$$
\Pol_r\xrightarrow{\Psi_r} A_r\lmod \xrightarrow{\phi^*} \Rep C_p^r. 
$$

(c) For all $0\le a<p^r$, $\Res(J(a))\cong L(a)$.
\end{lemma}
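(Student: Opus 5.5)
For (a), I will first check that $\phi$ is multiplicative on group elements. For $e,e'\in C_p^r$, additivity of $g^{\{z\}}$ gives $g^{\{z\}}(\iota(e)+\iota(e'))=g^{\{z\}}(\iota(e))g^{\{z\}}(\iota(e'))$. Indeed $g^{\{z\}}(T+S)=g^{\{z\}}(T)g^{\{z\}}(S)$ holds as polynomials, as in the proof of \Cref{lem:equivalences}, and we specialise $T,S$ to scalars. Since $\iota$ is a homomorphism, $\phi(ee')=\phi(e)\phi(e')$, and clearly $\phi(1)=g^{\{z\}}(0)=1$. So $\phi$ extends to an algebra map.

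Both algebras have dimension $p^r$, so it remains to show $\phi$ is injective, or equivalently surjective. Here I will use the Moore matrix (\Cref{Lem:Moore}). Write $\iota(g_i)=\lambda_i$, with the $\lambda_i$ linearly independent over $\mF_p$. Then
\[\phi(g_i)-1=\sum_{1\le a<p^r}\lambda_i^a z^{[a]}\equiv \sum_{k=0}^{r-1}\lambda_i^{p^k}z_k \pmod{\mfrak_r^2},\]
where $\mfrak_r=(z_0,\dots,z_{r-1})$. The congruence holds because every $a$ that is not a power of $p$ has $z^{[a]}\in\mfrak_r^2$. Since $\det\mathbb{M}(\ulambda)\neq0$, the images of $\phi(g_i)-1$ span $\mfrak_r/\mfrak_r^2$. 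As $A_r$ is local, a standard Nakayama argument then shows that the image of $\phi$, which is a subalgebra containing these elements, is all of $A_r$. This step is the only real content of (a).

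For (b), the formula for $\Res$ is just the evaluation of the $\mG_a$-action on the $\bk$-points $\iota(e)$. For $(V,g)\in\Pol_r$, the proof of \Cref{lem:equivalences} gives $g=g^{\{x\}}$ with $x=(g_{p^0},\dots,g_{p^{r-1}})$. Hence $g(\iota(e))=g^{\{x\}}(\iota(e))$, which is exactly $\phi(e)$ acting through $z_k\mapsto x_k$, i.e.\ the $\phi^*$ of $\Psi_r(V,g)$. Both sides are the identity on underlying vector spaces and morphisms, so $\Res|_{\Pol_r}=\phi^*\circ\Psi_r$. The latter is a composite of equivalences by \Cref{lem:equivalences} and part (a).

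For (c), I will use Steinberg's tensor product theorem: for $a<p^r$, $L(a)\cong L(a_0)\otimes L(a_1)^{(1)}\otimes\cdots\otimes L(a_{r-1})^{(r-1)}$ as $\SL_2$-modules. Restriction to $\mG_a$ is monoidal and commutes with Frobenius twists, since the twist of $(V,g)$ is $g(T^{p^k})$. By the definition $J(a)=\bigotimes_k J_{a_k+1}^{(k)}$, it therefore suffices to show that $L(b)|_{\mG_a}\cong J_{b+1}$ for $0\le b\le p-1$. The module $L(b)=\Sym^b\bk^2$ restricted to the lower unipotent subgroup has degree $b<p$, so it lies in $\Pol_1$. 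Its $A_1$-module is given by $g_1$, the action of the lower nilpotent matrix $f$ on $\Sym^b$. This acts as a single Jordan block of size $b+1$, because $f^b\neq0$ on the highest weight vector: the coefficients $b!/(b-k)!$ are nonzero mod $p$. So it corresponds to $J_{b+1}$, and applying $\Res$ gives (c).

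I expect the main obstacle to be the surjectivity step in (a), specifically making sure the linear terms are governed exactly by the Moore matrix. The rest is bookkeeping.
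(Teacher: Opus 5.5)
Your proof is correct. Parts (b) and (c) match the paper's argument: for (b), the identity $g=g^{\{x\}}$ on $\Pol_r$ makes $\Res$ and $\phi^*\circ\Psi_r$ agree on the nose; for (c), Steinberg's tensor product theorem reduces everything to $L(b)|_{\mG_a}\cong J_{b+1}$ for $b<p$.

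For (a) you prove bijectivity by a different route. The paper notes that, in the bases $\{e\}_{e\in C_p^r}$ of $\bk C_p^r$ and $\{z^{[a]}\}_{0\le a<p^r}$ of $A_r$, the map $\phi$ is given by the square matrix $(\iota(e)^a)_{e,a}$. This is a Vandermonde matrix in the $p^r$ pairwise distinct values $\iota(e)$, hence invertible.

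You instead work locally. The linear parts of the elements $\phi(g_i)-1$ are governed by the Moore matrix, so by \Cref{Lem:Moore} they give a basis of $\mfrak_r/\mfrak_r^2$. Surjectivity then follows because elements of $\mfrak_r$ spanning $\mfrak_r/\mfrak_r^2$ generate the local algebra $A_r$: products of $n$ of them span $\mfrak_r^n$ modulo $\mfrak_r^{n+1}$, and $\mfrak_r$ is nilpotent.

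The comparison is this. The Vandermonde argument is shorter and uses injectivity of $\iota$ directly, with no Moore determinant or filtration step. Your version gives a little more structural information: $\phi$ maps the augmentation ideal onto $\mfrak_r$ with invertible tangent map, so the isomorphism is controlled by its first-order part.
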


\begin{proof}
(a) Additivity of $g^{\{z\}}(T)$ in $T$ implies that $\phi$ is an algebra map. Using bases $\{z^{[a]}\}_{0\le a<p^r}$ of $A_r$ and $\{e\}_{e\in C_p^r}$, $\phi$ is given by the matrix with entries $\iota(e)^a$. The matrix is invertible, as the $p^r$ values $\iota(e)$ are all distinct.

(b) Restriction is clearly monoidal, a direct computation shows the two functors agree.

(c) follows from the Steinberg tensor product theorem for $\SL_2$, since for $0\le b<p$, the restriction to $\mG_a$ of the simple $\SL_2$-representation with highest weight $b$ is $J_{b+1}$.
\end{proof}

For any $p$-nilpotent operator $d$ of a vector space and $1\le j\le p$, we define generalised cohomologies 
$$
B_j(d) := \frac{\ker(d)\cap\im(d^{j-1})}{\ker(d)\cap\im(d^j)} .
$$
Every operator commuting with $d$ acts on these subquotients, while $d$ itself acts as zero. Note that for any $A_{r+1}$-module $Y$ and any $j$, $B_j(z_r|_Y)$ in this sense is $B_j(Y)$ in the sense of \Cref{def:B}.

We set $c_i:=\phi^{-1}(z_i)$ in $\kk C_p^r$ for $0\le i\le r-1$. 

\begin{definition}
For any $X\in\Rep C_p^r$ and any $0\le a< p^r$ with $p$-adic expansion $a=a_0+pa_1+\dots+p^{r-1} a_{r-1}$, set
$$
H_X(a) := B_{a_0+1}(c_0| \dots B_{a_{r-1}+1}(c_{r-1}|X)\dots) 
$$
(where $c_i|$ denotes the action of $c_i$ on what follows).
\end{definition}

\begin{theorem} \label{thm:wet-spaghetti} For any $X\in\Rep C_p^r$, and $St_{r-1}:=L({p^{r-1}-1})$,
$$
St_{r-1}\o X \cong \bigoplus_{0\le a< p^r} (St_{r-1}\o L(a))^{\oplus\dim H_X(a)} .
$$
In particular, the left-hand side is the restriction of a tilting module.
\end{theorem}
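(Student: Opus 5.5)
The plan is to transport \Cref{thm:St-poly} along the restriction functor $\Res\:\Pol\to\Rep C_p^r$ of \Cref{lem:Er-Ar}. Given $X\in\Rep C_p^r$, use \Cref{lem:Er-Ar}(b) to pick $\tilde X\in\Pol_r$ with $\Res(\tilde X)\cong X$; concretely $\tilde X=(\Psi_r)^{-1}$ of the $A_r$-module $X$ with $z_i$ acting as $c_i=\phi^{-1}(z_i)$. Since $\Res$ is monoidal and additive, applying it to the isomorphism of \Cref{thm:St-poly} gives
$$
\Res(J(p^{r-1}-1))\o X\;\cong\;\bigoplus_{0\le a<p^r}\bigl(\Res(J(p^{r-1}-1))\o\Res(J(a))\bigr)^{\oplus\dim H_{\tilde X}(a)}.
$$
By \Cref{lem:Er-Ar}(c), $\Res(J(a))\cong L(a)$ for all $0\le a<p^r$, and in particular $\Res(J(p^{r-1}-1))\cong L(p^{r-1}-1)=St_{r-1}$.

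It then remains to identify the two cohomologies: $H_{\tilde X}(a)$, defined via iterated $B_j$ of the $A_r$-module in \Cref{sec:St-poly}, and $H_X(a)$, defined via the operators $c_i$ on $X$. The $A_r$-module structure of $\tilde X$ is precisely $z_i\mapsto c_i|_X$. Unwinding \Cref{def:B}, $B_{a_{r-1}+1}(\tilde X)$ is the generalised cohomology of $z_{r-1}$, an $A_{r-1}$-module on which $z_0,\dots,z_{r-2}$ act by the induced operators. Iterating, the nested expression $B_{a_0+1}(\dots B_{a_{r-1}+1}(\tilde X)\dots)$ coincides literally with $B_{a_0+1}(c_0|\dots B_{a_{r-1}+1}(c_{r-1}|X)\dots)$, using the remark after the definition of $B_j(d)$ that $B_j(z_r|_Y)=B_j(Y)$ and that commuting operators descend to subquotients. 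Hence $\dim H_{\tilde X}(a)=\dim H_X(a)$, which yields the displayed isomorphism.

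For the final claim, note that $St_{r-1}\o L(a)$ is the restriction of the $\SL_2$-module $T_{p^{r-1}-1}\o L(a)$. The main point to verify is that $T_{p^{r-1}-1}\o L(a)$ is tilting for $0\le a<p^r$. Since $T_{p^{r-1}-1}=L(p^{r-1}-1)$ is the Steinberg module, I would use the standard fact that Steinberg tensor any module with restricted-enough highest weights is tilting. For $a<p^{r-1}$ this is Donkin's result that $St_{r-1}\o L(a)$ is tilting. For general $a=a'+p^{r-1}b$, Steinberg's tensor product theorem gives $L(a)=L(a')\o L(b)^{(r-1)}$, and $St_{r-1}\o L(b)^{(r-1)}$ is the tilting module $T(p^{r-1}-1+p^{r-1}b)$ for $b<p$. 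Tensoring it with $L(a')$ would then keep it tilting, by the same Donkin-type argument or by writing $L(a')$ as a tensor product of tilting modules $T_{a_i}^{(i)}$ with $a_i<p$ and using that tensor products of tilting modules are tilting. I expect this tilting verification to be the only nontrivial step; the rest is bookkeeping. A direct sum of restrictions of tilting modules is again such a restriction, which finishes the proof.
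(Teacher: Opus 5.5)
Your derivation of the isomorphism is exactly the paper's argument. You transport \Cref{thm:St-poly} along the monoidal functor $\Res$, use \Cref{lem:Er-Ar}(b,c), and observe that the $A_r$-module corresponding to $X$ has $z_i$ acting by $c_i$, so the two definitions of $H_X(a)$ coincide. For the tilting claim the paper simply cites \cite[Lemma~3.1.6]{CF2}. Your attempt to verify it directly breaks down at exactly the step you flagged as nontrivial.

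The second option you offer is false. For $i\ge1$ and $a_i\ge1$ the twist $T_{a_i}^{(i)}=L(a_i)^{(i)}$ is not a tilting $\SL_2$-module: already $L(1)^{(1)}=L(p)$ is $2$-dimensional, while $T_p\cong L(p-1)\o L(1)$ has dimension $2p$. So $L(a')$ is not a tensor product of tilting modules. The first option, ``the same Donkin-type argument'', is not actually the same statement. Donkin's theorem gives that $St_{r-1}\o L(a')$ is tilting for $a'<p^{r-1}$. What you need is that $T_{p^{r-1}-1+p^{r-1}b}\o L(a')$ is tilting, and tensoring a tilting module with a non-tilting one does not preserve tilting in general.

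The missing ingredient is Donkin's tensor product formula beyond the special case $T_{p^{r-1}-1}\o T_b^{(r-1)}\cong T_{p^{r-1}-1+p^{r-1}b}$ that you used. One way to close the gap is induction on $r$; the base case $r=1$ holds since $St_0=\unit$ and $L(a)=T_a$ for $a<p$. For the inductive step, write $a=a_0+pa''$ with $a''<p^{r-1}$, so that $St_{r-1}\o L(a)\cong \bigl(L(p-1)\o L(a_0)\bigr)\o\bigl(St_{r-2}\o L(a'')\bigr)^{(1)}$.
\begin{itemize}
\item By induction, the second factor is a direct sum of modules $T_\mu$.
\item The first factor is tilting and projective over the first Frobenius kernel. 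Hence it is a direct sum of modules $T_{p-1+c}$ with $0\le c\le p-1$.
\item For $\SL_2$ in every characteristic, $T_{p-1+c}\o T_\mu^{(1)}\cong T_{p-1+c+p\mu}$.
\end{itemize}
Equivalently, decompose $St_{r-1}\o L(a')$ into modules $T_\nu$ with $p^{r-1}-1\le \nu\le 2p^{r-1}-2$, and apply $T_\nu\o T_b^{(r-1)}\cong T_{\nu+p^{r-1}b}$ with $T_b=L(b)$. With this argument, or simply the citation, your proof is complete.
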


\begin{proof} The isomorphism follows directly from \Cref{thm:St-poly} using \Cref{lem:Er-Ar}. Note that in particular, $\Res(J(p^{r-1}-1))\cong L(p^{r-1}-1)=St_{r-1}$.

The fact that the right-hand side, and hence also the left-hand side, is the restriction of a tilting module follows directly from \cite[Lemma~3.1.6]{CF2}, which shows that $St_{r-1}\o L(a)$ is a restricted tilting module for $0\le a<p^r$.
\end{proof}


\section{The Benson--Etingof--Ostrik conjecture} \label{sec:BEO}

\subsection{V-functors lead to fibre functors}

For this subsection, we fix a $\bV$-functor $\Theta\:\Rep S_{p^r}\to \cV$ to a finite tensor category $\cV$, for some $r\in\mZ_{>0}$. We also assume that there exists an additive symmetric monoidal equivalence $T:\cV\to\cV$ with the property that $T(\lambda f)=\lambda^{p^r}T(f)$ for $\lambda\in\bk$ and all morphisms $f$ in $\cV$. For a tensor category $\cC$ we denote the resulting $\bO$-functor, see \S\ref{sec:VOfun}, as
\[\Phi_{\cC}:\cC\to \cC\boxtimes\cV.\]

\begin{theorem}\label{thm:general}
    Let $\cC$ be a finite tensor category that is $\Phi$-exact. 
    \begin{enumerate}
        \item If $p>2$, then there exists a tensor functor
    $\cC\to\Ver_p\boxtimes\cV.$
    \item If $p=2$,  then there exists a tensor functor
    $\cC\to\Ver_4^+\boxtimes\cV.$
    \end{enumerate}
\end{theorem}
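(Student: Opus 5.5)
The natural route is to iterate $\Phi_\cC$ and use finiteness to force stabilisation. Since $\cC$ is $\Phi$-exact, $\Phi_\cC:\cC\to\cC\boxtimes\cV$ is a tensor functor (twisted-linear by $\lambda\mapsto\lambda^{p^r}$, which we straighten using Remark~\ref{rem:twistlin}, and it is here that the auxiliary equivalence $T$ of $\cV$ will be used to untwist the $\cV$-factor). The first step is to show that $\cC\boxtimes\cV$ is again $\Phi$-exact, or more precisely that the composite
\[\cC\xrightarrow{\Phi_\cC}\cC\boxtimes\cV\xrightarrow{\Phi_\cC\boxtimes \cV}\cC\boxtimes\cV\boxtimes\cV\xrightarrow{\cC\boxtimes\otimes}\cC\boxtimes\cV\]
is again a tensor functor. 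For this we will use the intertwining property of $\bO$-functors from \cite[Section~5]{CF1} (the analogue of $\Fr$ commuting with tensor functors) together with the fact that $\Theta$ lands in a tensor category, so the tensor product functor $\cV\boxtimes\cV\to\cV$ is available. Iterating produces tensor functors $\Phi^{(n)}:\cC\to\cC\boxtimes\cV$ for all $n\ge1$.

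The second step is a finiteness/stabilisation argument. Since $\cC$ is finite and $\cV$ is finite, Frobenius--Perron dimensions are preserved by tensor functors. We will consider the images of simple objects and the projective generator $P$ of $\cC$. The $\cC$-component of $\Phi^{(n)}(X)$ is built from $X^{\otimes p^{rn}}$ and then killed by $\Theta$ on all non-transitive induced pieces. Applying $\Theta$ to non-projective summands, in the spirit of Lemma~\ref{lem:support}, we aim to show that for $n$ large, $\Phi^{(n)}$ has image in the subcategory $\langle\unit\rangle\boxtimes\cV$ up to a fusion part. More concretely, we will show that the $\cC$-part of $\Phi^{(n)}$ factors through the maximal Frobenius-exact (i.e.\ $\Ver_p$-fibring, resp.\ $\Ver_4^+$ for $p=2$) quotient. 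The heuristic is that $\Phi$ kills the ``$p$-nilpotent'' directions exactly as Ostrik's $\Fr$ does, and $\cC\boxtimes\cV$ with all of $\cC$'s contribution becoming semisimple-like can then be mapped to $\Ver_p$ via \cite{CEO}, \cite{EOf}. Then \cite{EOf} (finite, Frobenius exact $\Rightarrow$ fibres over $\Ver_p$, resp.\ $\Ver_4^+$ when $p=2$ by Benson--Etingof's and Coulembier's results) gives a functor from the stabilised $\cC$-part to $\Ver_p$. Composing yields $\cC\to\Ver_p\boxtimes\cV$.

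The main obstacle is the stabilisation step: proving that after enough iterations the $\cC$-factor becomes Frobenius exact. I would first try a dimension/length argument. Each step is a tensor functor from a finite category, so the image category (the Serre closure generated by the image) has nonincreasing ``rank'' in some sense, e.g.\ the number of simples or $\FPdim$ of the projective cover of $\unit$. We would argue that the projective cover of $\unit$ in the $\cC$-part must shrink strictly unless the $\cC$-part is already Frobenius exact, using that $\Theta$ annihilates $\Ind_H^G\unit$ for non-transitive $H$, which kills the contributions of non-free $S_{p^r}$-orbits on $\Hom(P,X^{\otimes p^r})$, as in \eqref{eq:reinterpretPhi}. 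Finiteness of $\cC$ then forces termination. The case distinction $p=2$ versus $p>2$ enters only through the known target of Frobenius-exact finite categories.
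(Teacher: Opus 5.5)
\paragraph{Review.} Your first step is fine, and simpler than you suggest. Once the twist is absorbed using $T$, the composite of $\Phi_\cC$, $\Phi_\cC\boxtimes\cV$ and the tensor product $\cV\boxtimes\cV\to\cV$ is just a composite of tensor functors, so no intertwining property is needed. Your first formulation, that $\cC\boxtimes\cV$ is $\Phi$-exact, is not needed and need not hold, since $\cV$ itself need not be $\Phi$-exact.

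The genuine gap is the stabilisation step. It is the entire content of the theorem, and you leave it as a heuristic. The target you name is also off. It is neither ``$\langle\unit\rangle\boxtimes\cV$ up to a fusion part'' nor a quotient. It is the tensor \emph{subcategory} $\cC_{\ex}\boxtimes\cV$, where $\cC_{\ex}$ consists of the objects $X$ with $\FPdim\Fr(X)=\FPdim X$; this category is Frobenius exact but in general far from semisimple. Your proposed invariant (the size of the projective cover of $\unit$) comes with no mechanism forcing a strict decrease. The $\bV$-property of $\Theta$ (killing $\Ind_H^G\unit$ for non-transitive $H$) is not what drives this step.

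The missing idea is to show that a specific simple drops out of the image. Let $V_{\max}\notin\cC_{\ex}$ be a simple object of maximal Frobenius--Perron dimension, and let $P_{V_{\max}}$ be its projective cover. One shows that no simple constituent of $\Phi_\cC(L)$, for $L$ simple, is of the form $V_{\max}\boxtimes S$. This rests on three ingredients.

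\begin{enumerate}
\item A mod-$p$ lemma. Since $X^{\otimes p}$ has a filtration with $i$ copies of $F_i(X)$ as subquotients, $[X^{\otimes p}:Y]\equiv\sum_{i<p} i\,[F_i(X):Y]$ modulo $p$. To iterate, use the congruence $(\sum_L m_L[L])^p\equiv\sum_L m_L[L]^p$ modulo $p$ in the Grothendieck ring. Use also that, for $L\in\cC_{\ex}$, the constituents of $L^{\otimes p}$ with multiplicity nonzero modulo $p$ stay in $\cC_{\ex}$. This gives $\FPdim W<\FPdim V$ for simple $V,W\notin\cC_{\ex}$ with $[V^{\otimes p^s}:W]\not\equiv 0$.
\item The case $L\notin\cC_{\ex}$. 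Preservation of $\FPdim$ and maximality of $V_{\max}$ leave only one critical case: $\Phi_\cC(L)\cong V_{\max}\boxtimes S$ with $S$ invertible. Via $\Phi_\cC(L)=\Theta\circ\Hom(-,L^{\otimes p^r})$ this means $\Theta(\Hom(P_{V_{\max}},L^{\otimes p^r}))\cong S$. Since $\Theta$ preserves categorical dimension, $[L^{\otimes p^r}:V_{\max}]\equiv\pm1$ modulo $p$, which contradicts the first point.
\item The case $L\in\cC_{\ex}$. Tensor functors send $\cC_{\ex}$ into $(\cC\boxtimes\cV)_{\ex}$, whereas $\FPdim\Fr(V_{\max}\boxtimes S)<\FPdim(V_{\max}\boxtimes S)$.
\end{enumerate}

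Consequently, unless $\cC_{\ex}$ contains all simples, $\Phi_\cC$ lands in $\cC'\boxtimes\cV$ for a proper tensor subcategory $\cC'$. Iterating at most as many times as there are simples puts the image into $\cC_{\ex}$ or its Serre closure. For $p>2$, $\cC_{\ex}$ is a Serre subcategory fibring over $\Ver_p$. For $p=2$, the Serre closure of $\cC_{\ex}$ fibres over $\Ver_4^+$. Both facts are due to Etingof--Ostrik.
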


The rest of the subsection is devoted to the proof. We always let $\cC$ be a finite tensor category. Recall from \cite[\S 7]{EOf} that there is a tensor subcategory $\cC_{\ex}\subset \cC$ which is the full subcategory of objects $X$ with $\FPdim(\Fr(X))=\FPdim(X)$, while all other objects satisfy $\FPdim(\Fr(X))<\FPdim(X)$. We will use the following observation, for finite tensor categories $\cC,\cD$:
\begin{equation}
    \label{eq:exex}
    \mbox{Any tensor functor $\cC\to\cD$ restricts to $\cC_{\ex}\to\cD_{\ex}$.}
\end{equation}
This follows immediately since tensor functors preserve Frobenius-Perron dimension, see \cite[Proposition~3.3.13]{EGNO}, and $\Fr$ commutes with tensor functors. The observation \eqref{eq:exex} can actually be generalised beyond finite tensor categories, by \cite[Theorem~8.2.1]{CEO2}.

\begin{lemma}\label{lem:VW} Consider $s\in\mZ_{>0}$ and simple objects $V,W\in\cC$ with $V,W\not\in\cC_{\ex}$. If
\[[V^{\otimes p^s}:W]\not\equiv 0\mod p,\]
then $\FPdim(W)<\FPdim(V)$.

\end{lemma}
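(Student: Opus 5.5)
We argue by computing $\FPdim$ of a suitable iterated Frobenius image and comparing it with $\FPdim(V)$. The key input is the relation between $V^{\otimes p^s}$ and the Frobenius functor. Consider the $s$-fold iterate $\Fr^{(s)}$ of Ostrik's functor \eqref{eq:classicFr}, i.e.\ the functor $\cC\to\cC\boxtimes\Ver_p^{\boxtimes s}$ obtained by applying $\Fr$ repeatedly. For the cyclic group $C_p$, a module $M$ in $\Rep C_p$ splits as $M\cong M_{\mathrm{free}}\oplus M'$, where $M'$ has no projective summands. Moreover $\dim M\equiv \dim M' \pmod p$, and the semisimplification of $M$ only sees $M'$.

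Applied to $X^{\otimes p}$ with its $C_p$-action, this gives the congruence
\[
[X^{\otimes p}]\;\equiv\;[\textstyle\bigoplus_i F_i(X)\otimes L_i\text{-multiplicity data}]
\]
in the Grothendieck group modulo $p$. More precisely, for each simple $W$, the multiplicity $[X^{\otimes p}:W]$ is congruent modulo $p$ to $\sum_{i=1}^{p-1} i\,[F_i(X):W]$. This holds because $X^{\otimes p}$ admits a $C_p$-stable filtration whose associated graded pieces are $J_i\otimes F_i(X)$ together with free pieces $J_p\otimes(\ldots)$, and $\dim J_i=i$. I would establish this by a standard filtration argument on $\ker d^k$, as in \cite[\S3]{EOf}.

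Iterating this with $X=V,\,\Fr$-images and so on yields
\[
[V^{\otimes p^s}:W]\equiv \sum \bigl(\text{products of } i\text{'s}\bigr)\,[\text{components of }\Fr^{(s)}(V):W]\pmod p .
\]
Hence, if the left-hand side is nonzero mod $p$, then $W$ is a composition factor of some component $F_{i_1}\cdots F_{i_s}$ of the iterated Frobenius image of $V$, written as an object of $\cC$ after forgetting the $\Ver_p$ parts.

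Next we bound FP dimensions. Since $V\notin\cC_{\ex}$, we have $\FPdim(\Fr(V))<\FPdim(V)$. Now $\FPdim(\Fr(V))=\sum_i \FPdim(F_i(V))\FPdim(L_i)$ with $\FPdim(L_i)\ge1$. Therefore $\FPdim(F_i(V))<\FPdim(V)$ for every $i$. Composition factors and iteration only decrease $\FPdim$, because $\Fr$ is additive and sends subquotients to objects of smaller or equal dimension on the graded level. For $s\ge2$, the worry is whether the iterates still strictly decrease, but the first step already gives a strict inequality. For later steps, we only need the weak bound $\FPdim(\Fr(Y))\le\FPdim(Y)$, which holds for all $Y$ by \cite[\S7]{EOf}, applied termwise. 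We must also know that the components of the iterated Frobenius are dominated in $\FPdim$ by those of $\Fr$ applied to their composition factors, which requires care since $\Fr$ is not exact. To handle this, I would instead use the $\FPdim$ form directly: $\FPdim$ of the total $\Fr(Y)$ is at most $\FPdim(Y)$ for every object $Y$, so semi-additivity on short exact sequences suffices. Combining these gives $\FPdim(W)\le\FPdim(\text{component})<\FPdim(V)$.

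The main obstacle is the mod-$p$ congruence for iterated tensor powers: showing that $[V^{\otimes p^s}:W]\bmod p$ is controlled only by the non-free part at each stage. The cleanest route is to use the subgroup $C_p^s\subset S_{p^s}$ and the fact that free $C_p$-parts contribute multiples of $p$ to multiplicities. Writing $V^{\otimes p^s}=(V^{\otimes p^{s-1}})^{\otimes p}$ and inducting on $s$ reduces everything to the case $s=1$. In that case one applies the congruence to the simple constituents of $V^{\otimes p^{s-1}}$ that appear with multiplicity nonzero mod $p$, using that cross terms in $(\bigoplus)^{\otimes p}$ are permuted freely by $C_p$ and hence vanish mod $p$. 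Note that the hypothesis $W\notin\cC_{\ex}$ is not needed for this direction, but it is harmless.
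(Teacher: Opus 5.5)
Your argument is correct. It differs from the paper's only in how the inductive step handles intermediate simples.

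Both arguments rest on two congruences. The first is $[X^{\otimes p}:Y]\equiv\sum_{i=1}^{p-1}i\,[F_i(X):Y]$. The second is $[V^{\otimes p^{s}}:W]\equiv\sum_L[V^{\otimes p^{s-1}}:L]\,[L^{\otimes p}:W]\pmod p$, which is additivity of $x\mapsto x^p$ on the commutative ring $K_0(\cC)/p$. Your ``cross terms in $(\bigoplus)^{\otimes p}$'' should be read in $K_0(\cC)/p$, since $V^{\otimes p^{s-1}}$ is not a direct sum of its constituents.

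The paper keeps the hypothesis $W\notin\cC_{\ex}$ throughout the induction. Using \eqref{eq:exex} and Frobenius exactness of $\cC_{\ex}$, it shows that any simple $L\in\cC_{\ex}$ has $[L^{\otimes p}:W]\equiv 0$. So every intermediate simple lies outside $\cC_{\ex}$, and the strict inequality applies at every step.

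You use the strict inequality only once, at $V$. After that you need only the weak bound $\FPdim(F_i(L))\le\FPdim(\Fr(L))\le\FPdim(L)$. This holds for every simple $L$, with equality possible exactly when $L\in\cC_{\ex}$, so intermediate objects of $\cC_{\ex}$ do no harm.

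Your route yields the stronger statement without the hypothesis $W\notin\cC_{\ex}$, as you correctly remark, and it needs no closure property of $\cC_{\ex}$. The paper's route gives strict decrease at each step, which is more than the lemma requires.

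For a clean write-up, phrase it as a chain of simples $V=L_0,L_1,\dots,L_s=W$ with $[L_{k-1}^{\otimes p}:L_k]\not\equiv 0\pmod p$, whence $\FPdim(W)\le\dots\le\FPdim(L_1)<\FPdim(V)$. The detour through the non-simple iterates $F_{i_s}\cdots F_{i_1}(V)$, and the remarks about $\Fr$ on subquotients and semi-additivity, then become unnecessary.
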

\begin{proof}
    Consider the $p$-th tensor power as a functor $\cC\to\cC\boxtimes \Rep C_p$. Consider arbitrary simple $X,Y\in\cC$. It follows for instance from \cite[\S 2.2]{BensonBook} that $X^{\otimes p}$ has a filtration whose subquotients are given by $i$ times $F_i(X)$, for $1\le i\le p$, where $F_i$ for $i<p$ is as in \eqref{eq:classicFr}. In particular
    \[[X^{\otimes p}:Y]\;\equiv\; \sum_{i=1}^{p-1}i[F_i(X):Y]\mod p.\]
    By assumption $\FPdim(\Fr(V))< \FPdim(V)$, so certainly $\FPdim(F_i(V))<\FPdim (V)$. This proves a (stronger version of) the case $s=1$. It also proves that if $X\in\cC_{\ex}$ then $[X^{\otimes p}:Y]\not=0$ modulo $p$ forces $Y\in\cC_{\ex}$ by \eqref{eq:exex} and the fact that $\cC_{\ex}$ is Frobenius exact, see \cite[Theorem~6.4]{EOf}.

    Now assume that we have proved the lemma for some $s>0$ and consider $s+1$. For $V,W$ as in the lemma, we have
    \[[V^{\otimes p^{s+1}}: W]\;\equiv\; \sum_{L} [L^{\otimes p}:W][V^{\otimes p^s}:L]\mod p,\]
where $L$ runs over all simples in $\cC$, by vanishing of mixed terms modulo $p$. If $L\in\cC_{\ex}$, then $[L^{\otimes p}:W]$ vanishes modulo $p$ by last sentence of the previous paragraph. When $L\not\in \cC_{\ex}$ it follows that $[V^{\otimes p^s}:L]$ being non-zero modulo $p$ forces $\FPdim(L)<\FPdim(V)$ by `the case $s$' and $[L^{\otimes p}:W]$ being non-zero modulo $p$ requires $\FPdim(W)<\FPdim(L)$ by the base case $s=1$ of the lemma. This allows us to conclude the proof by induction on $s$. \end{proof}

\begin{lemma}\label{lem:foriter}
    Assume that $\cC$ is $\Phi$-exact, then \textbf{at least one} of the following is true:
    \begin{enumerate}
        \item $\cC_{\ex}$ contains all simple objects of $\cC$. 
        \item $\Phi_{\cC}$ takes values in $\cC'\boxtimes\cV$ for a proper tensor subcategory $\cC'\subset\cC$.
    \end{enumerate}
\end{lemma}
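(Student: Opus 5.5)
We argue by contraposition: assuming (1) fails, we will show that the simple constituents of $\Phi_\cC(X)$, for all $X$, lie in $\cC'\boxtimes\cV$ for a proper tensor subcategory $\cC'$. Since $\Phi_\cC$ is exact, hence a tensor functor (twisted linear as in Remark~\ref{rem:twistlin}), and the simples of $\cC\boxtimes\cV$ are the $L\boxtimes M$, we only need to control which simples $L\in\cC$ appear in $\Phi_\cC(V)$ for simple $V$.

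\textbf{Choosing $\cC'$.} Among the simples not in $\cC_{\ex}$, choose $V_0$ with maximal $\FPdim$. Let $\cC'$ be the tensor subcategory generated by $\cC_{\ex}$ together with all simples $W\notin\cC_{\ex}$ with $\FPdim(W)<\FPdim(V_0)$. Then $V_0\notin\cC'$. To see this, note that any tensor subcategory generated by such objects has all its simple constituents either in $\cC_{\ex}$ or of FP-dimension strictly below $\FPdim(V_0)$. This needs an argument, and a cleaner alternative is to define $\cC'$ directly as the Serre subcategory of objects whose composition factors are in $\cC_{\ex}$ or have small FPdim, then check closure under $\otimes$. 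That check may fail, so I would rather aim directly at the following claim.

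\textbf{Key claim.} For every simple $V$, the composition factors $L\boxtimes M$ of $\Phi_\cC(V)$ satisfy either $L\in\cC_{\ex}$ or $\FPdim L<\FPdim V$ with $L\notin\cC_{\ex}$ when $V\notin\cC_{\ex}$. Given the claim, the essential image of the tensor functor $\Phi_\cC$ is generated by the $\Phi_\cC(V)$. Its ``$\cC$-support'' is then a tensor subcategory not containing $V_0$: apply the claim iteratively, using that $\Phi$ is monoidal, and that the collection of $L$ arising is closed under the operations generated. Thus $\cC'$ can be taken as the tensor subcategory generated by the $\cC$-parts of the composition factors, and it is proper since $V_0\notin\cC'$.

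\textbf{Proving the claim.} By \eqref{eq:reinterpretPhi}, the multiplicity of $L\boxtimes M$ in $\Phi_\cC(V)$ is read off from $\Theta$ applied to the $S_{p^r}$-module $\Hom(P_L,V^{\otimes p^r})$. The module $V^{\otimes p^r}$ has a filtration whose $L$-isotypic part is built from permutation modules $\Ind_H^{S_{p^r}}$ for stabilisers of multi-indices; these are killed by $\Theta$ unless $H$ is transitive. Exactness of $\Phi$ lets us pass to associated gradeds. The contributions that survive are then governed by multiplicities congruent to $[V^{\otimes p^r}:L]$ modulo $p$, as in the counting in Lemma~\ref{lem:VW}. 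More precisely, if $L$ occurs, then $[V^{\otimes p^s}:L]\not\equiv0$ for some $s\le r$. Lemma~\ref{lem:VW}, combined with the fact shown in its proof that $\cC_{\ex}$ is closed under these $p$-power constituents, then yields the dichotomy.

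\textbf{Main obstacle.} The hardest step is this last reduction. We must show that a simple $L\boxtimes M$ of $\Phi_\cC(V)$ forces a multiplicity of $L$ in $V^{\otimes p^s}$ that is nonzero modulo $p$. I would try this via FP dimensions instead. We have $\FPdim\Phi_\cC(V)=\FPdim(V)$, because $\Phi_\cC$ is a tensor functor. Compare this with the Frobenius functor $\Fr$ in the case $\FPdim(\Fr(V))<\FPdim(V)$, using that $\Phi$ factors through iterated $p$-th powers. If that works, then every $L$ appearing must have smaller dimension, since $V$ itself cannot reappear without violating the dimension count.
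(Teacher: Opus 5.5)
The skeleton is the paper's: pick a simple $V_0\notin\cC_{\ex}$ of maximal $\FPdim$, show that no constituent of $\Phi_\cC(V)$ (for $V$ simple) has the form $V_0\boxtimes M$, and take $\cC'$ to be the Serre subcategory generated by the $\cC$-parts of constituents of objects in the essential image. That last subcategory is already a tensor subcategory because $\Phi_\cC$ is monoidal, so this step is fine. Your key claim is also true.

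\textbf{The gap is the proof of the key claim.} The implication ``if $L$ occurs in $\Phi_\cC(V)$ then $[V^{\otimes p^s}:L]\not\equiv0\bmod p$'' is unjustified, and there is no reason for it to hold. By \eqref{eq:reinterpretPhi}, the $L$-part of $\Phi_\cC(V)$ is $M_L:=\Theta(\Hom(P_L,V^{\otimes p^r}))$. Since $\Theta$ preserves categorical dimension, the congruence only records $\dim M_L\equiv[V^{\otimes p^r}:L]$. A nonzero object of $\cV$ can have dimension $0$; already for $\Fr$ you only see the weighted sum $\sum_i i[F_i(V):L]$.

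Two further points fail. Your description of $\Hom(P_L,V^{\otimes p^r})$ through permutation modules $\Ind_H^{S_{p^r}}$ has no basis when $\cC$ is not semisimple. Your closing FP-dimension remark is also wrong: nothing in a dimension count excludes $\Phi_\cC(V)\cong L\boxtimes M$ with $M$ invertible and $\FPdim(L)=\FPdim(V)$, including $L=V$.

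\textbf{The missing idea:} use $\FPdim$ only to force the one situation in which the congruence does detect something. Suppose $V\notin\cC_{\ex}$ and $V_0\boxtimes M$ is a constituent of $\Phi_\cC(V)$. Then $\FPdim(V_0)\FPdim(M)\le\FPdim(V)\le\FPdim(V_0)$ forces $\Phi_\cC(V)\cong V_0\boxtimes M$ with $M$ invertible. Hence $M_{V_0}\cong M$ has dimension $\pm1$, so $[V^{\otimes p^r}:V_0]\equiv\pm1\bmod p$. This contradicts \Cref{lem:VW}, because $\FPdim(V_0)=\FPdim(V)$.

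The case $V\in\cC_{\ex}$ is not handled by your argument at all. There $\FPdim(V)$ may exceed $\FPdim(V_0)$, and no congruence helps. What you need is \eqref{eq:exex}. Since $\Phi_\cC$ is a tensor functor, every constituent of $\Phi_\cC(V)$ lies in the subquotient-closed subcategory $(\cC\boxtimes\cV)_{\ex}$. On the other hand, $\FPdim\Fr(V_0\boxtimes M)=\FPdim\Fr(V_0)\FPdim\Fr(M)<\FPdim(V_0)\FPdim(M)$, so $V_0\boxtimes M\notin(\cC\boxtimes\cV)_{\ex}$.
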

\begin{proof}
    Let us assume that $\cC_{\ex}$ does not contain all simples. Then we can take a simple object $V_{\max}\not\in\cC_{\ex}$ that has maximal Frobenius-Perron dimension among all such simples. Consider an arbitrary simple $L\in\cC$. We \emph{claim} that no simple constituent of $\Phi_{\cC}(L)$ is of the form $V_{\max}\boxtimes S$ for a simple $S\in\cV$.
    We split the proof of this claim into two cases.

    Assume first that $L\not\in\cC_{\ex}$. Since the Frobenius-Perron dimension of $\Phi_{\cC}(L)$ equals that of $L$, the only hypothetical case to consider is $\FPdim(L)=\FPdim(V_{\max})$ and $\Phi_{\cC}(L)=V_{\max}\boxtimes S$, for $S$ an invertible object. The interpretation of $\Phi_{\cC}$ as in \eqref{eq:reinterpretPhi} yields $\Theta(\Hom(P,L^{\otimes p^r}))\cong S$, with $P$ the projective cover of $V_{\max}$. By preservation of categorical dimension by the linear symmetric monoidal functor $\Theta$, it follows that 
    \[\dim \Hom(P,L^{\otimes p^r})=[L^{\otimes p^r}:V_{\max}]\equiv \pm 1\mod p.\] This leads to a contradiction with \Cref{lem:VW}.

    Secondly, assume that $L\in\cC_{\ex}$. If $V_{\max}\boxtimes S$ is a simple constituent of $\Phi_{\cC}(L)$, then we must have $V_{\max}\boxtimes S\in (\cC\boxtimes \cV)_{\ex}$ by \eqref{eq:exex}. This is contradicted by the following observation, where we abbreviate $V=V_{\max}$
    \[\FPdim(\Fr(V\boxtimes S))=\FPdim(\Fr(V))\FPdim(\Fr(S))<\FPdim(V)\FPdim(S)=\FPdim(V\boxtimes S).\]
    This concludes the proof of the claim.

Denote by $\cC'\subset\cC$ the Serre subcategory generated by all simple $V\in\cC$ for which there exists simple $S\in\cV$ such that $V\boxtimes S$ is a constituent of an object in the essential image of $\Phi_{\cC}$. By construction, this is actually a tensor subcategory, since $V_{\max}\not\in\cC'$ it is a proper tensor subcategory. Finally, $\Phi_{\cC}$ takes values in $\cC'\boxtimes \cV$, since $\cC'\boxtimes \cV$ is a Serre subcategory of $\cC\boxtimes \cV$ containing all simple objects that are constituents of objects in the essential image of $\Phi_{\cC}$.
\end{proof}

\begin{proof}[Proof of \Cref{thm:general}]
For a $\bk$-linear category $\cA$, let $\cA^{(r)}$ denote the same underlying preadditive category, but where the new $\bk$-linear structure satisfies $\lambda\cdot f=\lambda^{p^r}f$, where the right-hand side denotes the original action, for $\lambda\in\bk$ and $f$ a morphism. Since $\bk$ is perfect, the dimension of the endomorphism algebra of $\unit$ remains one when we apply this to a tensor category $\cC$, so that $\cC^{(r)}$ is again a tensor category. Moreover, by our assumed equivalence $T:\cV\to\cV$, we have an equivalence $\cV\cong \cV^{(r)}$ of tensor categories. Consequently, we can and will reinterpret $\Phi_{\cC}$ as a ($\bk$-linear) symmetric monoidal functor
\[\Phi_{\cC}:\cC\to\cC^{(r)}\boxtimes \cV.\]

Assume first that $p>2$. Then $\cC_{\ex}\subset\cC$ is a Serre subcategory by \cite[Corollary~9.5]{EOf} and admits a tensor functor to $\Ver_p$ by \cite[Theorem~8.1]{EOf}.
By  \Cref{lem:foriter}, for some $N>0$ (for example bounded by the number of simples in $\cC$), the functor
\[(\Phi_{\cC^{(r(N-1))}}\boxtimes \cV^{\boxtimes (N-1)})\circ\cdots\circ(\Phi_{\cC^{(r)}}\boxtimes \cV)\circ\Phi_{\cC}:\cC\to\cC^{(rN)}\boxtimes \cV^{\boxtimes N}\]
actually takes values in $\cC_{\ex}^{(rN)}\boxtimes \cV^{\boxtimes N}$.
Using the product tensor functor $(\cV)^{\boxtimes N}\to\cV$ and the above $\cC_{\ex}^{(rN)}\to \Ver_p$, we obtain the desired tensor functor.

If $p=2$, we need some minor adjustments. Let $\widetilde{\cC}_{\ex}$ be the Serre closure of $\cC_{\ex}$. By \cite[Corollary~9.8]{EOf}, $\widetilde{\cC}_{\ex}$ admits a tensor functor to $\Ver_4^+$. We can now repeat the above argument for $p>2$, but with replacements $\cC_{\ex}\leadsto \widetilde{\cC}_{\ex}$ and $\Ver_p\leadsto \Ver_4^+$.
\end{proof}

\begin{remark} \Cref{thm:general}(1) actually remains true for pretannakian categories with finitely many simple objects up to isomorphism (but not necessarily enough projectives). The proof only requires very minor changes, for instance \cite[Theorem~8.1]{EOf} needs to be replaced with the more general \cite[Theorem~1.1]{CEO}. Similarly extending \Cref{thm:general}(2) seems slightly more subtle.
\end{remark}

\subsection{Higher Frobenius functors}\label{sec:useWS}
In this subsection, we fix $r\in\mZ_{>0}$ and $\ulambda\in\mP^{r-1}_f$. As in \S\ref{sec:inSL2} this leads to an embedding $\iota_{\ulambda}:C_p^r\hookrightarrow \SL_2(\bk)$ and $\blambda\in\mP_f^{r-1}$. We thus have a symmetric monoidal restriction functor from $\Tilt \SL_2$ to $\Rep C_p^r$ (by abuse of notation we use the same symbol for a tilting module of $\SL_2$ and its restriction under $\iota_{\ulambda}$). We denote its essential image by $\cD_{\ulambda}$. It was proved in \cite[Theorem~5.3.5]{CF1} that the restriction functor induces an equivalence
\[\Tilt \SL_2/I_r\;\to\;\cD_{\ulambda}/\cK_{\ulambda} ,\]
where $I_r$ is the kernel of the defining functor $\Tilt \SL_2\to\Ver_{p^r}$ and $\cK_{\ulambda}$ is some tensor ideal in $\cD_{\ulambda}$. We invert this symmetric monoidal equivalence in the proposition below.

\begin{proposition}\label{prop:Theta}
    The $\bk$-linear symmetric monoidal functor
    \[\cD_{\ulambda}\to\cD_{\ulambda}/\cK_{\ulambda}\xrightarrow{\sim}\Tilt \SL_2/I_r\hookrightarrow \Ver_{p^r}\]
    has a unique $\bk$-linear symmetric monoidal extension $\Theta^{\ulambda}:\Rep C_p^r\to\Ver_{p^r}$. For $C_p^r$ interpreted as the permutation group $E_r<S_{p^r}$, this is a $\bV$-functor.
\end{proposition}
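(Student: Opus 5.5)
The plan is to realise $\Theta^{\ulambda}$ as a composite of two functors. The first is the functor $St_{r-1}\otimes-$, which by \Cref{thm:wet-spaghetti} lands in $\cD_{\ulambda}$. The second is built from the given functor on $\cD_{\ulambda}$, corrected by dividing out the Steinberg factor. Recall that $St_{r-1}=T_{p^{r-1}-1}$ maps to an object of $\Ver_{p^r}$ which is invertible up to a nonzero dimension factor. Concretely, its image is a simple object, and $St_{r-1}\otimes St_{r-1}^*$ contains $\unit$ as a direct summand in $\Ver_{p^r}$.

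\textbf{Existence.} Let $F\colon\cD_{\ulambda}\to\Ver_{p^r}$ denote the given composite. I would first try to extend $F$ additively to the Karoubian closure, which is all of $\Rep C_p^r$. This would work if every $X\in\Rep C_p^r$ were a summand of an object of $\cD_{\ulambda}$, which is false in general. The fix uses \Cref{thm:wet-spaghetti}: for every $X$, the object $St\otimes X$ lies in $\cD_{\ulambda}$, and $X$ is a direct summand of $St^*\otimes St\otimes X$ in any quotient where the coevaluation $\unit\to St^*\otimes St$ splits. Since $F(St)$ has nonzero categorical dimension in $\Ver_{p^r}$, the morphism $\unit\to St\otimes St^*\to\unit$ maps to a nonzero scalar. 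Hence the idempotent $e=\dim^{-1}\,\mathrm{coev}\circ\mathrm{ev}$ lives in $\End(St\otimes St^*)$, viewed in the $\bk$-linear symmetric monoidal category $\cD'$ obtained from $\Rep C_p^r$ by quotienting the tensor ideal generated by $\cK_{\ulambda}$-type morphisms.

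The cleaner route is therefore as follows. Consider the full subcategory $\cD_{\ulambda}$ and the tensor ideal $\cK_{\ulambda}$. Extend $\cK_{\ulambda}$ to the tensor ideal $\widetilde{\cK}$ of $\Rep C_p^r$ consisting of all $f$ with $\mathrm{id}_{St}\otimes f\in\cK_{\ulambda}$. Then check that $\Rep C_p^r/\widetilde{\cK}$ embeds fully faithfully into the Karoubi envelope of $\cD_{\ulambda}/\cK_{\ulambda}\simeq\Tilt\SL_2/I_r$, sending $X$ to the image of the idempotent $e\otimes\mathrm{id}_X$ on $St^*\otimes St\otimes X$. Composing with $\Tilt\SL_2/I_r\hookrightarrow\Ver_{p^r}$, which is Karoubian since $\Ver_{p^r}$ is abelian, gives $\Theta^{\ulambda}$. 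Monoidality and the symmetric structure are inherited, and on $\cD_{\ulambda}$ the result agrees with $F$ because there $e\otimes\mathrm{id}_X$ is isomorphic to $X$.

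\textbf{Uniqueness.} Any linear symmetric monoidal extension $\Theta'$ is determined by $\Theta'(X)\cong$ the image of $\Theta'(e\otimes\mathrm{id}_X)$, which equals $F(e\otimes\mathrm{id}_X)$. The same holds on morphisms. So $\Theta'\cong\Theta^{\ulambda}$.

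\textbf{$\bV$-functor property.} For a non-transitive subgroup $H<E_r$, the subgroup $H$ is a proper subgroup of $C_p^r$, and $\Ind_H^{E_r}\unit$ is projective over no point of $\mP_f^{r-1}$. More precisely, its support is contained in the $\mF_p$-rational hyperplanes $P_G$, so $\supp(\Ind_H\unit)\cap\mP^{r-1}_f=\varnothing$. I need to show that $\Theta^{\ulambda}$ kills it, i.e.\ that $St\otimes\Ind_H\unit$ is a sum of tilting modules $T_i$ with $i\ge p^r-1$ (up to $\cK_{\ulambda}$). Using \Cref{thm:wet-spaghetti}, this reduces to showing $H_{\Ind_H\unit}(a)=0$ for the relevant $a$. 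Alternatively, one can use that $\Ind_H\unit\otimes St$ is projective over the subgroup generic along $\blambda$, and invoke the characterisation of $\cK_{\ulambda}$ from \cite{CF1} via support at $\blambda$.

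I expect this last step to be the main obstacle: identifying precisely which objects of $\cD_{\ulambda}$ lie in $\cK_{\ulambda}$, presumably those whose support avoids $\blambda$, and combining that with \eqref{sup:tensor} and the fact that $\blambda\in\mP^{r-1}_f$ by \Cref{lem:bijP}.
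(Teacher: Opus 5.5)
\textbf{Gap: the splitting idempotent does not exist.} Your existence and uniqueness arguments both rest on the idempotent $e=\dim(St)^{-1}\,\mathrm{coev}\circ\mathrm{ev}$, and it does not exist. For $r\ge 2$ one has $\dim St_{r-1}=\dim_{\bk}L(p^{r-1}-1)=p^{r-1}=0$ in $\bk$; the case $r=1$ is trivial, since $St_0=\unit$. Symmetric monoidal functors preserve categorical dimension, so $F(St_{r-1})$ also has dimension $0$ in $\Ver_{p^r}$.

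Nor can $\unit$ be split off $F(St\otimes St^*)$ in some other way. Since $\End_{\Ver_{p^r}}(F(St))\cong\End_{\Tilt\SL_2/I_r}(T_{p^{r-1}-1})=\bk$, both $\Hom(\unit,F(St\otimes St^*))$ and $\Hom(F(St\otimes St^*),\unit)$ are one-dimensional. They are spanned by coevaluation and by evaluation composed with the symmetry, and their composite is $\dim F(St)=0$.

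So $\Theta^{\ulambda}(X)$ is never exhibited as a retract of the tilting image $F(St^*\otimes St\otimes X)$. In fact the proposed embedding of $\Rep C_p^r/\widetilde{\cK}$ into the Karoubi envelope of $\Tilt\SL_2/I_r$ (which is already idempotent complete) cannot exist, because $\Theta^{\ulambda}$ genuinely takes values outside the image of tilting modules. For example, take $r=2$ and $p\ge 5$. Any extension satisfies $\Theta^{\ulambda}(L(p))\otimes F(T_{p-1})\cong F(T_{2p-1})$. The Frobenius--Perron dimensions from \cite{BEO} then give $\FPdim\Theta^{\ulambda}(L(p))=2\cos(\pi/p)\in(1,2\cos(\pi/p^2))$, which is not the dimension of any tilting image. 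Your uniqueness argument fails for the same reason.

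\textbf{What replaces it.} The missing idea is to use the \emph{abelian} structure of $\Ver_{p^r}$ rather than idempotent completeness. This is what \cite[Lemma~5.4.6]{CF1} supplies in the paper, applied with $f=\mathrm{ev}\colon St_{r-1}^{\otimes 2}\to\unit$. Since $F(St)\neq 0$, $F(f)$ is nonzero, hence an epimorphism onto the simple object $\unit$. Put $P=St^{\otimes 2}$. As $\otimes$ is exact in $\Ver_{p^r}$, $\unit$ is the cokernel of $F(f\otimes\id_P-\id_P\otimes f)$.

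One then defines $\Theta^{\ulambda}(X)$ as the cokernel of $F\bigl((f\otimes\id_P-\id_P\otimes f)\otimes\id_X\bigr)\colon F(P\otimes P\otimes X)\to F(P\otimes X)$. Both objects lie in $\cD_{\ulambda}$ by \Cref{thm:wet-spaghetti}. Uniqueness follows because any linear symmetric monoidal extension $\Theta'$ must send $X$ to this cokernel: tensor the presentation of $\unit$ with $\Theta'(X)$.

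\textbf{The $\bV$-functor step.} This part can be completed without any description of $\cK_{\ulambda}$. Take a proper subgroup $H<E_r$. Then $St\otimes\Ind_H^{E_r}\unit\cong\Ind_H^{E_r}(St|_H)$ is projective, because $\supp(St)=\{\blambda\}$ \cite[Corollary~4.2.7]{CF2} and $\blambda\in\mP^{r-1}_f$ (\Cref{lem:bijP}) misses the rational hyperplanes. Hence it is free, hence a restriction of copies of $T_{p^r-1}$, which $F$ kills. Since $\Theta^{\ulambda}(St)\neq 0$ in the tensor category $\Ver_{p^r}$, this forces $\Theta^{\ulambda}(\Ind_H^{E_r}\unit)=0$. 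This is the argument of \Cref{lem:kerTheta}; the paper itself simply refers to the proof of \cite[Theorem~5.4.3]{CF1}.
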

    \begin{proof}
Uniqueness and existence of the extension along $\cD_{\ulambda}\subset \Rep C_p^r$ follow from \Cref{thm:wet-spaghetti} and \cite[Lemma~5.4.6]{CF1} (for the morphism $f$ in the latter lemma, we can take the evaluation $St_{r-1}^{\otimes 2}\to\unit$). That the resulting $\Theta^{\ulambda}$ is actually a $\bV$-functor was proved (assuming the validity of \Cref{conj:CF}, which is established in \Cref{thm:wet-spaghetti}) in  the proof of \cite[Theorem~5.4.3]{CF1}.  
    \end{proof}

\Cref{prop:Theta} thus implies that we obtain an $\bO$-functor $\Phi^{\ulambda}$, as predicted in \cite[Conjecture~5.4.2(3)]{CF1}. These are generalisations of the Frobenius functor $\Fr$, which we recover for $r=1$ (and the unique point in $\mP^0$). To detect $\Phi^{\ulambda}$-exactness below, we add the following lemma.

\begin{lemma}\label{lem:kerTheta}
    The following conditions are equivalent on $M\in \Rep C_p^r$:
    \begin{enumerate}
        \item $St_{r-1}\otimes M$ is not projective;
        \item $\blambda \in \supp(M)$;
        \item $\Theta^{\ulambda}(M)\not=0$.
    \end{enumerate}
\end{lemma}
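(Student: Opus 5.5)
The plan is to prove (1)$\Leftrightarrow$(2) by support theory, and (1)$\Leftrightarrow$(3) using \Cref{thm:wet-spaghetti} together with the construction of $\Theta^{\ulambda}$.

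For (1)$\Leftrightarrow$(2), I would use \eqref{sup:tensor}: $St_{r-1}\otimes M$ is non-projective if and only if $\supp(St_{r-1})\cap\supp(M)\neq\varnothing$. So it suffices to show $\supp(St_{r-1})=\{\blambda\}$. This is where the choice of $\blambda$ enters.

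\begin{itemize}
\item \emph{Projectivity on proper subgroups.} $St_{r-1}$ is $\iota_{\ulambda}$-restricted, and $\iota_{\ulambda}$ is injective. Its restriction to a proper subgroup $C_p^{s}$, $s<r$, is then a restricted Steinberg-type module. I expect it to be projective there; this is presumably in \cite{CF2}.
\item \emph{Non-projectivity on $C_p^r$.} $\dim St_{r-1}=p^{r-1}<p^r$, so $St_{r-1}$ is not projective over $C_p^r$.
\item \emph{Locating the support.} By \Cref{lem:support}, the support is nonempty and lies in $\mP_f^{r-1}$. By the Dade/rank-variety description, $\supp(St_{r-1})$ is the set of $\mu$ where $\sum_i\mu_iX_i$ fails to act freely. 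Transporting via $\phi$, the operators $c_0,\dots,c_{r-2}$ act freely on $St_{r-1}\cong A_{r-1}$, while $c_{r-1}$ acts by zero. So the support is exactly the point $[\mu]$ with $\sum\mu_iX_i\in \bk c_{r-1}+\mathfrak{m}^2$.
\item \emph{Identifying the point.} I would compute that this point is $\blambda$, by writing $c_{r-1}$ modulo $\mathrm{rad}^2$ through the inverse Moore matrix. The defining orthogonality relations $\sum_i\lambda_i^{p^l}\blambda_i=0$ for $l\le r-2$ are exactly what this computation needs.
\end{itemize}

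Alternatively, the whole identification $\supp(St_{r-1})=\{\blambda\}$ can be quoted from \cite[\S4.2]{CF2}.

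For (3)$\Rightarrow$(1): if $St_{r-1}\otimes M$ is projective, then $\Theta^{\ulambda}(St_{r-1})\otimes\Theta^{\ulambda}(M)$ is the image of a projective $C_p^r$-module. That image is a direct sum of copies of $\Theta^{\ulambda}(\bk C_p^r)$, and $\bk C_p^r$ is a summand of $T_i$ with $i\ge p^r-1$, which lies in $I_r$. So the image is $0$. Since $\Theta^{\ulambda}(St_{r-1})$ is the image of $T_{p^{r-1}-1}\notin I_r$, it is nonzero, hence invertible-free but rigid and nonzero. Tensoring with a nonzero object in a tensor category is faithful, so $\Theta^{\ulambda}(M)=0$.

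For (1)$\Rightarrow$(3): apply \Cref{thm:wet-spaghetti}. Then $St_{r-1}\otimes M$ is a sum of $T$'s restricting $St_{r-1}\otimes L(a)$, and by hypothesis this sum is not projective. The non-projective summands of restricted tiltings are images of $T_i$ with $i<p^r-1$, equivalently those not killed modulo $\cK_{\ulambda}$. So $\Theta^{\ulambda}(St_{r-1}\otimes M)\neq0$, hence $\Theta^{\ulambda}(M)\neq0$.

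The main obstacle is matching "projective restricted tilting" with "zero in $\Ver_{p^r}$". That is, I must check that $\cK_{\ulambda}$ together with $I_r$ kills exactly the projective summands among objects of $\cD_{\ulambda}$ of the form $St_{r-1}\otimes L(a)$. I would extract this from \cite[Theorem~5.3.5]{CF1} and the proof of \cite[Theorem~5.4.3]{CF1}.
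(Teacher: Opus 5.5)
Your proposal is correct and follows essentially the same route as the paper. For (1)$\Leftrightarrow$(2) you combine $\supp(St_{r-1})=\{\blambda\}$ (the paper simply quotes \cite[Corollary~4.2.7]{CF2}) with \eqref{sup:tensor}, and for (1)$\Leftrightarrow$(3) you use monoidality of $\Theta^{\ulambda}$, $\Theta^{\ulambda}(St_{r-1})\neq0$, \Cref{thm:wet-spaghetti}, and the fact from \cite[Theorem~5.3.5(1)]{CF1} that the $T_j$ restricting to projectives are exactly those with $j\ge p^r-1$. Your optional direct computation of the support is also sound, since $\phi(X_i)\equiv\sum_k\lambda_i^{p^k}z_k$ modulo the square of the augmentation ideal turns ``$\sum_i\mu_iX_i$ proportional to $c_{r-1}$ modulo that square'' into exactly the defining relations of $\blambda$, so your step about proper subgroups is not even needed.
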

\begin{proof} 
    By \cite[Corollary~4.2.7]{CF2}, we have $\supp St_{r-1}=\{\blambda\}$, so that equivalence between (1) and (2) follows.

    By construction above, the only indecomposable tilting modules $T(j)$ annihilated by $\Theta^{\ulambda}$ are those with $j\ge p^r-1$. These are precisely the tilting modules that are projective over $C_p^r$, see \cite[Theorem~5.3.5(1)]{CF1}. Since $\Theta^{\ulambda}$ is monoidal, we have
    \[\Theta^{\ulambda}(M)\otimes \Theta^{\ulambda}(St_{r-1})\cong \Theta^{\ulambda}(M\otimes St_{r-1}).\]
    Since $\Theta^{\ulambda}(St_{r-1})$ is not zero and $M\otimes St_{r-1}$ is the restriction of a tilting module by \Cref{thm:wet-spaghetti}, the equivalence between (1) and (3) follows.
\end{proof}

\subsection{Proof of the conjecture}

\begin{theorem}\label{thm:BEO}
    Every finite tensor category admits a tensor functor to $\Ver_{p^n}$ for some $n\in\mZ_{>0}$.
    Every such tensor functor is unique up to isomorphism.
\end{theorem}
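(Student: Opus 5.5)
Existence comes from \Cref{thm:general}, once we find a $\bV$-functor $\Theta$ for which $\cC$ is $\Phi$-exact. Uniqueness comes from a general fact about incompressible categories.

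\textbf{Choosing $r$ and $\ulambda$.} Let $\cC$ be a finite tensor category, and let $P=\bigoplus_L P_L$ be the direct sum of the projective covers of the simple objects. The key point is that $\Phi^{\ulambda}_\cC$ is exact as soon as $\Theta^{\ulambda}$ kills $\Hom(Q,-)$ applied to the "bad" part of tensor powers. In the description \eqref{eq:reinterpretPhi}, $\Phi_\cC(X)$ is $\Theta\circ\Hom(-,X^{\otimes p^r})$. For a short exact sequence $0\to X\to Y\to Z\to 0$, the $p^r$-th tensor power has a filtration. Its subquotients other than $X^{\otimes p^r}$ and $Z^{\otimes p^r}$ are induced from the stabilisers of non-constant words, which are non-transitive subgroups. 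The $\bV$-property of $\Theta^{\ulambda}$ (\Cref{prop:Theta}) kills such induced objects, and this already makes $\Phi$ monoidal and additive. Exactness is the harder point. I would use the argument from \cite{CF1}: exactness follows if every $E_r$-module of the form $\Hom(Q,X^{\otimes p^r})$, with $Q$ projective, has $\blambda\notin\supp$ whenever it is killed in the relevant homological sense. By \Cref{lem:kerTheta}, vanishing of $\Theta^{\ulambda}$ is exactly $\blambda\notin\supp(M)$.

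\textbf{Support bound from finiteness.} So I want a single $\ulambda$ that avoids all the supports that matter. The supports of the $E_r$-modules $\Hom(P,X^{\otimes p^r})$ are controlled by the cohomological support of $\cC$ through the "tensor power" map on cohomology. Since $\cC$ is finite, its cohomology ring is finitely generated (Etingof--Ostrik finite generation, or the known result for finite symmetric tensor categories). Its Krull dimension $n$ then bounds the rank of elementary abelian subgroups on which these modules are non-projective. Concretely, I would aim to show the following: for $r>n$, each relevant module restricts projectively to every proper subgroup in the right sense. Then by \Cref{lem:support}, only points of $\mP_f^{r-1}$ can lie in the supports, and the supports form finitely many proper closed subsets. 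Since $\mP_f^{r-1}$ is open dense and $\bk$ is infinite, and $\ulambda\mapsto\blambda$ is a bijection on $\mP_f^{r-1}$ (\Cref{lem:bijP}), I can choose $\ulambda$ with $\blambda$ outside all of them. This makes $\cC$ $\Phi^{\ulambda}$-exact.

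This step, translating finiteness of $\cC$ into a uniform support bound for all $X$, is the main obstacle. I would first try to reduce to $X$ simple plus projectives, and use that $\Phi$ is automatically exact on sequences split in $\cC$.

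\textbf{Applying \Cref{thm:general}.} For $\cV=\Ver_{p^r}$, the twist equivalence $T$ required there exists: Frobenius twist on $\Tilt\SL_2$ extends to the envelope. \Cref{thm:general} then gives a tensor functor $\cC\to\Ver_p\boxtimes\Ver_{p^r}$ for $p>2$, or to $\Ver_4^+\boxtimes\Ver_{p^r}$ for $p=2$. Composing with the tensor product functors $\Ver_p\boxtimes\Ver_{p^r}\to\Ver_{p^r}$ (from the inclusion $\Ver_p\subset\Ver_{p^r}$) and $\Ver_4^+\boxtimes\Ver_{2^r}\to\Ver_{2^{\max(r,2)}}$ gives a functor to $\Ver_{p^n}$.

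\textbf{Uniqueness.} Given two tensor functors $F_1,F_2:\cC\to\Ver_{p^n}$, both factor through $\Ver_{p^m}$ for $m$ large. The functor $F_1\boxtimes F_2$ out of $\cC\boxtimes\cC$, precomposed with the diagonal, makes $\mathrm{Hom}$-comparison possible. Using incompressibility of $\Ver_{p^m}$ together with the standard argument that tensor functors into an incompressible category from the same source are isomorphic (as in \cite{CEO2}, via the fibre functor on the algebra of functions $F_1$ versus $F_2$), I conclude $F_1\cong F_2$.
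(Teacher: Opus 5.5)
\textbf{Existence.} The gap is at the step you yourself flag as the main obstacle, and your strategy for it points the wrong way. Your exactness criterion (``$\blambda\notin\supp$ whenever it is killed in the relevant homological sense'') is not a precise statement. The uniform support bound meant to feed it is neither proved nor true as formulated. Already for $X=\unit$, the module $\Hom(P,X^{\otimes p^r})=\Hom(P,\unit)$ carries the trivial $E_r$-action, so it is non-projective on every non-trivial subgroup for every $r$. You never say which modules ``matter''. The paper uses no finite generation of cohomology and no Krull-dimension bound.

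The missing idea is that you do not need to control all objects $X$. By \cite[Proposition~5.2.4]{CF1}, $\cC$ is $\Phi^{\ulambda}$-exact as soon as $\Theta^{\ulambda}(\Hom(P,Q^{\otimes p^r}))\neq 0$, where $P$ is a projective generator and $Q$ is the injective hull of $\unit$. By \Cref{lem:kerTheta} this asks for $\blambda$ to lie \emph{in} the support of this single module, not outside a family of supports.

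The existence of a suitable $r$ is \Cref{lem:notproj}, and its input is moderate growth, not cohomology:
\begin{enumerate}
\item If all $M_n=\Hom(P,Q^{\otimes n})$ were projective, the $S_n$-invariant $P\tto\unit\hookrightarrow Q^{\otimes n}$ would force $P_0(n)$ to be a summand of $M_n$.
\item Faithfulness of $P_0(n')$ on $\bk S_n$ \cite[Proposition~A.1]{CEO} would then make $\bk S_n\to\End(Q^{\otimes n})$ injective for all $n$, contradicting \cite[Corollary~4.11]{CEO}.
\item Take $m$ minimal with $M_m$ non-projective. By minimality and \cite[Lemma~3.2.4]{CoNew}, its restrictions to proper Young subgroups are projective.
\item Chouinard's theorem and \Cref{lem:ESn} then give $m=p^s$, with $M_m$ non-projective over $E_s$ but projective over all its proper subgroups.
\item \Cref{lem:support} and \Cref{lem:bijP} produce $\ulambda\in\mP_f^{s-1}$ with $\blambda\in\supp(M_m)$.
\end{enumerate}
From there, your use of \Cref{thm:general} and of the tensor product functor to $\Ver_{p^n}$ matches the paper.

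\textbf{Uniqueness.} Incompressibility of the target does not by itself give uniqueness of tensor functors, and \cite{CEO2} contains no such general result. There, uniqueness is proved under the MN hypothesis \cite[Theorem~7.3.1]{CEO2}, which is open for $\Ver_{p^n}$ when $p>2$. Your ``algebra of functions'' step only produces a non-zero commutative algebra in $\Ver_{p^m}$. It gives no reason why that algebra admits a morphism to $\unit$, which is what an isomorphism $F_1\cong F_2$ requires. For finite $\cC$, the paper uses \cite[Remark~6.1.9]{CEO2} to reduce uniqueness to \cite[Conjecture~B.6]{EOf}, which was proved in \cite[Theorem~7.1]{CSZ}. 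Your argument needs this or an equivalent input.
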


We start the proof with some preliminary results. In this section, we let $\cC$ be a finite tensor category.

\begin{lemma}\label{lem:notproj}
    Let $P$ be a projective generator of $\cC$ and let $Q$ be the injective hull of $\unit$ ($Q$ is also projective). Then there exists $s\in\mZ_{>0}$ such that the $E_s$-representation $\Hom(P,Q^{\otimes p^s})$ is non-projective, but its restriction to every proper subgroup of $E_s$ is projective.
\end{lemma}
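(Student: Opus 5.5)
We have to find $s$ such that $M_s:=\Hom(P,Q^{\otimes p^s})$, viewed as an $E_s$-module via $E_s<S_{p^s}$, is not projective, while it is projective over every proper subgroup $H<E_s$. The plan has two steps: first produce some $s$ for which $M_s$ is non-projective, then take the smallest such $s$ and show that it automatically has the second property.

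\emph{Step 1: non-projectivity for some (in fact every) $s$.} Since $\Hom(P,-)$ is exact and faithful, $\dim M_s$ is a positive combination of the Jordan--Hölder multiplicities of $Q^{\otimes p^s}$. To see that $M_s$ is not projective, we show that $M_s$ has an $E_s$-fixed vector that does not lie in the image of the norm $N=\sum_{e\in E_s}e$. The natural candidate is built from the socle embedding $\unit\hookrightarrow Q$: its tensor power $u:\unit\to Q^{\otimes p^s}$ is $S_{p^s}$-invariant. Composing with a map $P\to\unit$ gives a nonzero $S_{p^s}$-invariant element of $M_s$.

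We claim this element is not a norm. Suppose $N(m)=u\circ\pi$ for some $m\in M_s$, where $\pi: P\to \unit$. Post-compose with $\epsilon^{\otimes p^s}$, where $\epsilon:Q\to \unit$ would ideally be a map with $\epsilon\circ(\unit\hookrightarrow Q)\neq0$. This fails if $Q\neq\unit$. An alternative is to use the evaluation/coevaluation pairing $Q\otimes Q^*\to\unit$, which is compatible with the permutation actions; however, this too gives nonzero constants only in the semisimple case.

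A more robust route to Step 1 is as follows. If every $M_s$ were projective, then for a $\bV$-functor-type argument $\Theta(M_s)=0$, so $\Phi$ would kill $Q$, contradicting faithfulness/FPdim preservation. The cleanest approach avoids $\Theta$, however: use the support characterisation together with a dimension count. The dimension $\dim M_s$ is congruent mod $p$ to the number of $E_s$-fixed points on a basis of words in the simples of $Q$. Projectivity forces $p^s\mid\dim M_s$. Choosing $P$ so that $\dim\Hom(P,\unit)$ is prime to $p$ (we may modify $P$, since any projective generator works), and using that $[Q^{\otimes p^s}:L]\equiv [Q:L]$-type Frobenius congruences give a nonzero class mod $p$ for $L=\unit$, we get that $\dim M_s\not\equiv0 \bmod p$ for suitable choices. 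I expect this step to be the main obstacle: making a clean congruence argument requires care. I would first try $s=1$ and Frobenius reciprocity over $C_p$, i.e. $F_i(Q)$ from \eqref{eq:classicFr}.

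\emph{Step 2: minimality gives projectivity on proper subgroups.} Fix $s$ minimal with $M_s$ non-projective (or use that non-projectivity for one $s$ propagates). Let $H<E_s$ be a proper subgroup; it lies in a maximal subgroup $G\cong C_p^{s-1}$, and it suffices to treat $G$. As a permutation group on $p^s$ points, $G$ is not transitive. It has $p$ orbits, each of size $p^{s-1}$, on which $G$ acts as $E_{s-1}$, while one factor $C_p$ acts trivially there (\Cref{lem:ESn}). Hence
\[
Q^{\otimes p^s}\big|_G\cong (Q^{\otimes p^{s-1}})^{\otimes p},
\]
with $G=C_p^{s-1}$ acting diagonally via $E_{s-1}$ on each block. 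Writing $\Hom(P,X\otimes Y)$ through $X\otimes Y$, and using that the Deligne-product/functor description is monoidal up to projectives, $M_s|_G$ is then a direct summand of a tensor product involving $M_{s-1}$-type modules. A cleaner route is the support variety: by \Cref{lem:support}-style rank-variety arguments, $\supp(M_s|_G)$ corresponds to the rank variety of a tensor product in which one factor is $M_{s-1}$-like, hence projective by minimality, so the whole product is projective by \eqref{sup:tensor}. If $s=1$, the proper subgroups are trivial and the second property is automatic.
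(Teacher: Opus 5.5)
\textbf{The gap is in Step 1.} Nothing you write shows that some $\Hom(P,Q^{\otimes p^s})$ is non-projective, and your claim that this holds for \emph{every} $s$ is false.

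Take $p=2$ and $\cC=\Ver_4^+$, realised as $\bk[d]/d^2$-modules with $d$ primitive and $R$-matrix $1\otimes 1+d\otimes d$. Here $P=Q=\bk[d]/d^2$, and $\Hom(P,P\otimes P)\cong P\otimes P$, with $S_2$ acting through $c=\tau R$. In the basis $e_{ij}=d^i\otimes d^j$ one finds $\ker(1+c)=\im(1+c)=\mathrm{span}\{e_{11},e_{01}+e_{10}\}$. So this is a free $\bk S_2$-module, and at $s=1$ your invariant vector $u\circ\pi$ \emph{is} a norm.

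All three of your routes fail on this example:
\begin{itemize}
\item The dimension count cannot work. Every $\dim\Hom(P',Q^{\otimes n})$ is a multiple of $2^n$, yet the lemma must hold for some $s\ge 2$; non-projectivity is in general invisible to dimensions.
\item The $\Theta$-argument is circular. $\Phi$ preserves $\FPdim$ only once it is exact, and this lemma is exactly the input used in \Cref{cor:exactforsome} to obtain exactness. In this example $\Fr(P)=0$, so an $\bO$-functor killing $Q$ is no contradiction.
\item The invariant-vector argument fails, as shown above.
\end{itemize}

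\textbf{The missing idea is moderate growth.} It must be applied to $M_n=\Hom(P,Q^{\otimes n})$ as an $S_n$-module for \emph{all} $n$, not only for $p$-powers. Your invariant vector should be used the other way round. If $M_n$ is projective, it is injective, so the invariant line spanned by $P\tto\unit\hookrightarrow Q^{\otimes n}$ forces the projective cover $P_0(n)$ of the trivial module to be a direct summand of $M_n$. By \cite[Proposition~A.1]{CEO}, for every $n$ there is $n'\ge n$ such that $\bk S_n\subset\bk S_{n'}$ acts faithfully on $P_0(n')$. Hence, if all $M_n$ were projective, $\bk S_n\to\End(Q^{\otimes n})$ would be injective for all $n$, contradicting \cite[Corollary~4.11]{CEO}.

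One then takes the minimal $m$ with $M_m$ non-projective. Restricted to a Young subgroup, $M_m$ decomposes into outer tensor products of the $\Hom(P_a,Q^{\otimes k})$ \cite[Lemma~3.2.4]{CoNew}, which gives projectivity on proper Young subgroups. Chouinard's theorem together with \Cref{lem:ESn} then yields $m=p^s$ and the subgroup $E_s$.

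Your Step 2 restricts to a maximal subgroup of $E_s$, which acts regularly on $p$ blocks of size $p^{s-1}$, and uses minimality in $s$. It is a valid variant of this last part once the K\"unneth-type decomposition is stated precisely. It does not remove the need for Chouinard, though: to get from the $S_n$-statement to some $s$ with non-projective $E_s$-restriction, you still have to pass through all elementary abelian subgroups of $S_n$.
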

\begin{proof}
    For every $n\in\mZ_{>0}$, we set
    \[M_n:=\Hom_{\cC}(P,Q^{\otimes n})\;\in\;\Rep S_n.\]
    It has a non-zero $S_n$-invariant vector
    \[P\tto \unit\hookrightarrow Q^{\otimes n},\]
    where the right morphism is the $n$-th tensor power of the defining inclusion $\unit\hookrightarrow Q$ and the left map is an arbitrary epimorphism. If $M_n$ is projective, it thus needs to contain the projective cover $P_0(n)$ (which is also the injective hull) of the trivial representation in $\Rep S_n$ as a direct summand.

By \cite[Proposition~A.1]{CEO}, for every $n\in\mZ_{>0}$ we can choose $n'\ge n$ such that for the canonical inclusion $S_n<S_{n'}$
    \[\bk S_n\hookrightarrow \bk S_{n'}\to\End_{\bk}(P_0(n'))\]
    is injective.
    Assume for a contradiction that all $M_n$ are projective, then this last observation implies that for every $n$ there exists $n'\ge n$ with
    \[\bk S_n\to\End_{\cC}(Q^{\otimes n})\to \End_{\cC}(Q^{\otimes n'})\to \End_{\bk}( \Hom(P,Q^{\otimes n'}))\]
    injective, so clearly the left-most map is injective too. This is contradicted for finite tensor categories (more generally, for tensor categories of moderate growth) by \cite[Corollary~4.11]{CEO}.

    We can thus choose the minimal $m\in\mZ_{\ge p}$ for which $M_m$ is not projective. It follows from minimality that the restriction of $M_m$ to any proper Young subgroup of $S_m$ is projective. Indeed, by \cite[Lemma~3.2.4]{CoNew}, the restriction of $M_m$ to a Young subgroup $S_\kappa<S_m$, for a partition $\kappa$ of length $l$, is a direct sum of $S_\kappa$-representations
    \[\Hom(P_{a_1},Q^{\otimes \kappa_1})\boxtimes \Hom(P_{a_2},Q^{\otimes \kappa_2})\boxtimes \cdots\boxtimes \Hom(P_{a_l},Q^{\otimes \kappa_l}),\]
    where $P_{a_i}$ are indecomposable projectives.

    By Chouinard's Theorem, see for instance \cite[Theorem~1.9.1]{BensonBook}, it follows that the restriction of $M_m$ to some elementary abelian $p$-subgroup of $S_m$ is not projective, while the restriction to every \emph{non-transitive} (meaning contained in a conjugate of a proper Young subgroup)  elementary abelian $p$-subgroup of $S_m$ is projective. The conclusion thus follows from \Cref{lem:ESn}.
\end{proof}

Now we consider the $\bO$-functors $\Phi^{\ulambda}$ resulting from \S\ref{sec:useWS}.

\begin{corollary}\label{cor:exactforsome}
    $\cC$ is $\Phi^{\ulambda}$-exact for some $r\in\mZ_{>0}$ and $\ulambda\in\mP_f^{r-1}$.
\end{corollary}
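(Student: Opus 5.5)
The plan is to take $r:=s$, with $s$ as provided by \Cref{lem:notproj}, and then choose $\ulambda$ so that $\Phi^{\ulambda}_\cC$ is exact. Let $P$ be a projective generator and $Q$ the injective hull of $\unit$. By \Cref{lem:notproj}, the $E_r$-module $M:=\Hom(P,Q^{\otimes p^r})$ is non-projective but projective on every proper subgroup. Then \Cref{lem:support} gives a point of $\supp(M)\cap\mP_f^{r-1}$. By \Cref{lem:bijP}, this point is $\blambda$ for a unique $\ulambda\in\mP_f^{r-1}$, and we fix this $\ulambda$. By \Cref{lem:kerTheta}, we then have $\Theta^{\ulambda}(M)\neq0$, i.e.\ $\Phi^{\ulambda}_\cC(Q)\neq 0$ in the description \eqref{eq:reinterpretPhi}.

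Next I would convert this nonvanishing into exactness, along the lines of the classical argument in \cite{EOf,CEO} that Frobenius exactness is detected on the injective hull of $\unit$. Since $\Phi^{\ulambda}_\cC$ is additive symmetric monoidal (it is an $\bO$-functor by \Cref{prop:Theta}), the general results of \cite{CF1} on $\bO$-functors should give the criterion: $\Phi_\cC$ is exact if and only if it does not annihilate a nonzero projective object, or equivalently the injective hull $Q$ of $\unit$. The reason is as follows. For a projective $X$, $X^{\otimes p^r}$ carries an $S_{p^r}$-action whose nontrivial orbits on tensor factors produce induced modules from non-transitive subgroups, and these are killed by the $\bV$-functor. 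So $\Phi$ behaves like a Frobenius twist on projectives, and $\FPdim$ is preserved on them unless everything vanishes. Concretely, I would show that $\FPdim\Phi_\cC(Q)=\FPdim(Q)$ whenever $\Phi_\cC(Q)\ne0$. This comparison should hold because every projective $R$ is a summand of $Q\otimes R'$ for some $R'$, together with monoidality. It then gives $\FPdim \Phi_\cC(X)=\FPdim X$ on all projectives, hence exactness via the standard argument: $\Phi_\cC$ is exact on short exact sequences of projectives (they split), and one compares $\FPdim$ along projective resolutions or uses \cite[Section~5]{CF1}.

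The main obstacle is this second step, namely the criterion that nonvanishing on $Q$ forces exactness. If \cite{CF1} already contains the criterion "$\Phi_\cC$ exact iff $\Phi_\cC(Q)\neq0$" for $\bO$-functors arising from $\bV$-functors (analogous to \cite[Theorem~1.1]{CEO}), I would simply cite it. Otherwise, I would first try to adapt the Etingof--Ostrik proof. In that proof one first shows that $\Phi$ is exact iff it is exact on $\unit\hookrightarrow Q$, and then reduces to injectivity of $\Phi(\unit)\to\Phi(Q)$, which holds precisely when $\Theta^{\ulambda}$ does not kill the invariant vector produced in the proof of \Cref{lem:notproj}. Here I would use that, on $M$, the relevant summands are (by \Cref{thm:wet-spaghetti} after tensoring with $St_{r-1}$) restrictions of tilting modules, so the nonprojectivity detected at $\blambda$ indeed gives a nonzero image under $\Theta^{\ulambda}$.
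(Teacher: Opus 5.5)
Your proposal follows the paper's proof. The paper chooses $\ulambda$ exactly as you do, via \Cref{lem:notproj}, \Cref{lem:support}, \Cref{lem:bijP} and \Cref{lem:kerTheta}, and then cites the criterion \cite[Proposition~5.2.4]{CF1}, namely that $\Theta^{\ulambda}(\Hom(P,Q^{\otimes p^r}))\neq0$ forces $\Phi^{\ulambda}_\cC$ to be exact.

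Your $\FPdim$-based fallback is not needed, and its claims are not justified as stated. The underlying reason is simpler. Take a short exact sequence in $\cC$ and apply $\Phi_\cC$; tensoring the resulting complex with $\Phi_\cC(Q)$ gives, by monoidality, $\Phi_\cC$ applied to the sequence tensored with the projective $Q$. That sequence splits, so its image is exact. Since $\Phi_\cC(Q)\neq0$, the functor $\Phi_\cC(Q)\otimes-$ is exact and faithful, so the original image complex is exact too.
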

\begin{proof}
Consider $P,Q,r$ as in \Cref{lem:notproj}.
The latter, combined with \Cref{lem:support} and \Cref{lem:bijP}, implies that there exists $\ulambda\in\mP_f^{r-1}$ such that $\blambda$ is in the support of $\Hom(P,Q^{\otimes p^r})$, viewed as an $E_r$-representation. By \Cref{lem:kerTheta} we find $\Theta^{\ulambda}(\Hom(P,Q^{\otimes p^r}))\not=0$. The conclusion thus follows from \cite[Proposition~5.2.4]{CF1}.
\end{proof}

\begin{proof}[Proof of \Cref{thm:BEO}]
In order to be able to use \Cref{thm:general}, we remark that $\Ver_{p^r}$ has the desired twisted linear autoequivalence $T$, see \cite[\S 4.9]{BEO} or \cite[Remark~2.3.8]{CF1}.

    By the combination of \Cref{cor:exactforsome} and \Cref{thm:general}, there is always some $r\in\mZ_{>0}$ such that there exists a tensor functor
    \[\cC\to\Ver_{p^2}\boxtimes \Ver_{p^r}.\]
    By taking $n=\max(2,r)$ and the tensor product tensor functor $\Ver_{p^2}\boxtimes \Ver_{p^r}\to\Ver_{p^n}$ the first assertion follows.

    Uniqueness of the tensor functor is reduced to \cite[Conjecture~B.6]{EOf}\footnote{Note that the numbering is different in the arxiv version.} in \cite[Remark~6.1.9]{CEO2}. The conjecture was shown in \cite[Theorem~7.1]{CSZ}.
\end{proof}

\begin{remark}\label{rem:toomany}
\Cref{thm:general}  shows that, for some $\iota_{\ulambda}:C_p^r\hookrightarrow (\bk,+)\subset\SL_2(\bk)$, if a finite tensor category $\cC$ is $\Phi^{\ulambda}$-exact then it admits a tensor functor $\cC\to\Ver_{p^r}$. However, for most $\ulambda$, $\Ver_{p^r}$ itself will not be $\Phi^{\ulambda}$-exact. This is already explored for $r=2=p$ in \cite{CF1}. As it happens $\Ver_{p^r}$ will be $\Phi^{\ulambda}$-exact for $\iota_{\ulambda}$ that identifies $C_p^r$ with $\mF_{p^r}\subset \bk$, although we do not prove this in the current paper. Consequently, \Cref{thm:general} can actually be upgraded to the statement that there exists a tensor functor $\cC\to\Ver_{p^r}$ if and only if $\cC$ is $\Phi^{\ulambda}$-exact for such a $\ulambda$. After the proof, it thus becomes clear that most of the $\bO$-functors we consider play no role.
\end{remark}

\subsection{Some consequences}
We present some of the results that follow from the resolution of the finite Benson--Etingof--Ostrik conjectures.

Recall, see \cite[Definition 6.1.13]{EGNO}, that a finite (not necessarily symmetric) tensor category~$\cC$ is integral if $\FPdim(X)\in\mZ$ for all $X\in\cC$. Super tannakian tensor categories and tensor categories that fibre over $\Ver_4^+$ are clearly integral, and the following result shows that these are the only (symmetric) ones.

\begin{proposition}
    Let $\cC$ be an integral finite tensor category.
    \begin{enumerate}
        \item If $p>2$, then there is tensor functor $\cC\to\sVec$.
        \item If $p=2$, then there is tensor functor $\cC\to\Ver_4^+$.
    \end{enumerate}
\end{proposition}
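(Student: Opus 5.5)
By \Cref{thm:BEO}, there is a tensor functor $F\:\cC\to\Ver_{p^n}$ for some $n$. The plan is to show that integrality forces the image of $F$ into the smallest possible subcategory, namely $\sVec$ for $p>2$ and $\Ver_4^+$ for $p=2$, and then to corestrict.

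Since tensor functors preserve Frobenius--Perron dimensions (\cite[Proposition~3.3.13]{EGNO}), $F(X)$ has integer $\FPdim$ for every $X\in\cC$. Let $\cC'\subset\Ver_{p^n}$ be the tensor subcategory generated by the image of $F$, that is, the Serre closure of the subquotients of the objects $F(X)$. Then $F$ factors through $\cC'$.

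The key step is to show that every object of $\cC'$ has integer $\FPdim$. Then one uses the known description of tensor subcategories of $\Ver_{p^n}$ and of $\FPdim$'s of its simples. The simples of $\Ver_{p^n}$ are tensor products of simples of $\Ver_p$ and of $\Ver_{p^n}^+$-type pieces (\cite{BEO}), and their $\FPdim$'s are products of quantities of the form $[k]_q$ with $q$ a root of unity of order $2p^j$. The only simples with integer $\FPdim$ generate $\sVec$ (for $p>2$) or $\Ver_4^+$ (for $p=2$); in the latter case $\Ver_4^+$ has a single simple, which is invertible. So the task reduces to this integrality claim for $\cC'$.

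This is the main obstacle, because subquotients of an integral object need not be integral. A cleaner route would be to apply the argument of \Cref{thm:general}/\Cref{thm:BEO} directly, replacing the higher Frobenius functor by the original $\Fr$. If $\cC$ is integral, then so is $\cC\boxtimes\cV$ restricted to the image, and $\FPdim\Fr(X)\le\FPdim X$. I would try to show that an integral finite category satisfies $\cC=\cC_{\ex}$ (or its Serre closure $\widetilde{\cC}_{\ex}$ for $p=2$).

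The concrete attempt is as follows. Take a simple $V\notin\cC_{\ex}$ of maximal $\FPdim$. Apply the exact functor $\Phi^{\ulambda}$ from \Cref{cor:exactforsome}, followed by the fibre functor to $\Ver_{p^r}$. This gives a tensor functor $\cC\to\Ver_{p^r}$, and every $\FPdim$ in its image is a sum of $\FPdim$'s of simples of $\Ver_{p^r}$, weighted by multiplicities. Since $F$ is faithful and exact, it induces an injective ring map on Grothendieck rings after tensoring with $\mathbb{Q}$ in the semisimple part.

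Integrality of $\FPdim F(L)$ for every simple $L$ means the multiplicity vector of $F(L)$ lies in the $\mathbb{Q}$-span where the Galois conjugates agree. Galois conjugation acts on the $\FPdim$'s of the simples of $\Ver_{p^n}$; these are totally positive cyclotomic integers and their Galois conjugates permute dimension-type quantities. I would use that a sum of such positive values is rational only if it involves simples of dimension $1$, since the trace of $\FPdim$ over the Galois group exceeds the degree otherwise (a Siegel/Cassels-type bound on $\sum_\sigma\sigma(d)$).

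Next, F is monoidal, so $F(L)^{\otimes m}$ is integral for all $m$. From this one deduces that all simple constituents of $F(L)$ have $\FPdim$ equal to $1$, hence are invertible. In $\Ver_{p^n}$, the invertible simples generate $\sVec$ (for $p>2$) or $\Vec$ inside $\Ver_4^+$ (for $p=2$). The Serre closure of these then lands in $\sVec$, respectively in $\Ver_4^+$, since for $p=2$ the subcategory $\Ver_4^+$ is exactly the Serre closure of $\Vec$ in $\Ver_{2^n}$. Composing with the corestriction gives the claimed functor.

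The delicate point is the "rational positive sum forces dimension-one constituents" step. I would first try it by taking tensor powers and comparing growth rates, using that $\FPdim F(L)^{\otimes m}$ grows like $d^m$ while the Galois-conjugate sums must be equal.
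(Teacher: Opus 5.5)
Your opening reduction matches the paper's: a tensor functor $F\colon\cC\to\Ver_{p^n}$ from Theorem~\ref{thm:BEO}, with every $F(X)$ of integer $\FPdim$. The gap is the step that carries all the content. You never prove that an object of $\Ver_{p^n}$ with integer $\FPdim$ has only invertible simple constituents, and the mechanism you propose does not work.

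\begin{itemize}
\item The dimensions of simples of $\Ver_{p^n}$ are not totally positive. Already in $\Ver_5\subset\Ver_{5^n}$, $\FPdim(L_2)=2\cos(\pi/5)$ has the negative Galois conjugate $2\cos(3\pi/5)$.
\item A Siegel/Cassels-type lower bound on traces of totally positive integers points in the wrong direction. It cannot force constituents of dimension one.
\item The tensor-power/growth-rate suggestion is not an argument.
\item The claim that a faithful exact functor is injective on Grothendieck rings is false. A fibre functor $\Rep G\to\Vect$, with $G$ finite of order prime to $p$, is a counterexample.
\item The detour through $\cC_{\ex}$ and $\Phi^{\ulambda}$ only re-derives Theorem~\ref{thm:BEO}.
\item The ``main obstacle'' is a red herring. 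You never need the subcategory generated by the image to be integral. It is enough that each single object $F(X)$ of integer dimension has invertible constituents.
\end{itemize}

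The paper closes this step by citation, applied directly to each $F(X)$.
\begin{itemize}
\item By \cite[Theorem~4.5(iv)]{BEO}, an object of $\Ver^+_{p^n}$ with integer $\FPdim$ lies in $\mathrm{Vec}$.
\item For $p>2$, write $Y=A\oplus B\boxtimes\overline{\unit}$ in $\Ver_{p^n}=\Ver^+_{p^n}\boxtimes\sVec$. Then $\FPdim(Y)=\FPdim(A\oplus B)$, so $Y\in\sVec$.
\item For $p=2$, use \cite[Corollary~2.2]{BE} and \cite[Proposition~2.5]{BE}.
\end{itemize}

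Your Galois idea can be repaired, but with a different inequality. For a simple $L$, $\FPdim(L)$ is a root of the characteristic polynomial of the nonnegative integer matrix $N_L$ of $L\otimes-$ on $K_0$. Hence every conjugate $\sigma(\FPdim L)$ is an eigenvalue of $N_L$ and satisfies $|\sigma(\FPdim L)|\le\FPdim L$. Now suppose $\alpha=\sum_i m_i\FPdim(L_i)\in\mZ$ with $m_i\ge 0$. For each $\sigma$ you get $\alpha=\sum_i m_i\operatorname{Re}\sigma(\FPdim L_i)\le\alpha$. Equality forces $\sigma(\FPdim L_i)=\FPdim L_i$ whenever $m_i>0$. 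So all constituents have integer dimension, which also dissolves your perceived obstacle.

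You would still need two facts from \cite{BEO} and \cite{BE}:
\begin{itemize}
\item the only simples of $\Ver_{p^n}$ of integer dimension are the invertible ones;
\item the Serre subcategory they generate is $\sVec$ for $p>2$ (so there are no nontrivial extensions between them) and $\Ver_4^+$ for $p=2$.
\end{itemize}
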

\begin{proof}
    By preservation of Frobenius-Perron dimension and Theorem~\ref{thm:BEO} it suffices to observe that the only objects of integral Frobenius-Perron dimension in $\Ver_{p^\infty}$ belong to the subcategory $\sVec$, if $p>2$, or $\Ver_4^+$ if $p=2$.

    Assume $p>2$. By \cite[Theorem~4.5(iv)]{BEO}, the only $X\in\Ver_{p^n}^+$ with $\FPdim(X)\in\mZ$ are in $\operatorname{Vec}$. Any $Y=A\oplus B\boxtimes\overline{\unit}$ in $\Ver_{p^n}^+\boxtimes \sVec=\Ver_{p^n}$ has the same Frobenius-Perron dimension as $A\oplus B\in \Ver_{p^n}^+$, so $\FPdim(Y)\in\mZ$ forces $A\oplus B\in \operatorname{Vec}$.

    The case $p=2$ follows similarly from \cite[Corollary 2.2]{BE} and the fact that the only objects in $\Ver_{2^\infty}$ that only have trivial simple constituents belong to $\Ver_4^+$, see \cite[Proposition 2.5]{BE}.
\end{proof}

The following result gives an estimate on which higher Verlinde category a finite tensor category fibres over based on the Frobenius--Perron dimension of a projective generator.

\begin{proposition} Let $\cC$ be a finite tensor category with projective generator $P$ such that $\FPdim(P) < p^{r-1}$, for some $r\ge2$. Then $\cC$ admits a tensor functor for $\Ver_{p^r}$.
\end{proposition}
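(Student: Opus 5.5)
The plan is to rerun the proof of \Cref{thm:BEO}, but to control the rank $s$ produced by \Cref{lem:notproj}. Let $Q$ be the injective hull of $\unit$. Since $Q$ is a direct summand of $P$ (every indecomposable projective is a summand of a projective generator), we have $\FPdim(Q)\le\FPdim(P)<p^{r-1}$. Apply \Cref{lem:notproj} to obtain $s$ such that $M:=\Hom(P,Q^{\otimes p^s})$ is non-projective over $E_s$ but projective over all proper subgroups. \Cref{cor:exactforsome}, \Cref{thm:general} and the final step of the proof of \Cref{thm:BEO} then give a tensor functor to $\Ver_{p^{\max(2,s)}}$. Since $r\ge2$ and $\Ver_{p^n}\subset\Ver_{p^r}$ for $n\le r$, it suffices to show $s\le r-1$. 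In fact $s<r$ would already suffice, so some slack is available.

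The key step is the bound on $s$, which I would derive from dimensions. By minimality in the proof of \Cref{lem:notproj}, $m=p^s$ is the least $m\ge p$ with $M_m$ non-projective. The restriction of $M$ to the index-$p$ subgroup $E_{s-1}^{\,p}$ (or to any proper subgroup) is projective. Consider the $E_s$-representation $M$. Projectivity over a maximal subgroup $H\cong C_p^{s-1}$ forces $p^{s-1}\mid\dim M$.

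More usefully, non-projectivity together with \Cref{lem:support} gives a point of $\supp(M)$ in $\mP_f^{s-1}$. I would then want a lower bound of the shape $\dim \Hom(P,Q^{\otimes p^s})\ge$ something like $p^{s-1}\cdot(\text{multiplicity})$ to compare with $\FPdim$. The cleaner route is through \Cref{lem:kerTheta}: $St_{s-1}\otimes M$ is non-projective. Hence, by \Cref{thm:wet-spaghetti}, $H_M(a)\ne0$ for some $a<p^s-p^{s-1}$, since $St_{s-1}\otimes L(a)$ is projective exactly when $a\ge p^s-p^{s-1}$.

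I would try to translate this into growth: $\dim\Hom(P,X)$ is bounded above by $c\cdot\FPdim(X)$ for a constant $c$ depending on $P$. Thus $\dim M\le c\,\FPdim(Q)^{p^s}<c\,p^{(r-1)p^s}$. On the other hand, a module that is projective over all proper subgroups but not over $E_s$ has dimension at least $p^{s-1}$ times something, which seems too weak to compare.

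So I expect this step to be the main obstacle. A better approach is to realise $Q^{\otimes p^s}$ as $X^{\otimes p^s}$ for an object of small dimension. Then, via the interpretation \eqref{eq:reinterpretPhi} of $\Phi_\cC$ and preservation of $\FPdim$ by the functors $\Phi^{\ulambda}$ (which are exact by \Cref{cor:exactforsome}), one gets $\FPdim(Q)=\FPdim(\Phi^{\ulambda}(Q))$, a sum of terms $\FPdim(V)\FPdim(\Theta^{\ulambda}(\cdot))$ with $V$ simple. If $s\ge r$, then $\Theta^{\ulambda}$ of the nonzero piece $\Hom(P_V,Q^{\otimes p^s})$ lands on objects of $\Ver_{p^s}$ outside $\Ver_{p^{s-1}}$ (non-projective over $E_s$ yet projective over proper subgroups). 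Such objects have $\FPdim\ge$ roughly $\FPdim$ of the image of $St_{s-1}$, namely $2\cos$-type quantities near $p^{s-1}$. This would force $\FPdim(Q)\ge p^{s-1}\ge p^{r-1}$, a contradiction.

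Making that last $\FPdim$ lower bound precise in $\Ver_{p^s}$ is where I expect the real work to lie.
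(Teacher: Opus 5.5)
Your proposal has a genuine gap at exactly the step you flag: nothing in it bounds $s$. The integer $s$ from \Cref{lem:notproj} is defined by a minimality condition on when $\Hom(P,Q^{\otimes m})$ stops being projective. You give no argument linking it to $\FPdim(Q)$, and the sketched $\FPdim$ estimate does not provide one. By \Cref{thm:wet-spaghetti} and monoidality of $\Theta^{\ulambda}$, dividing by $\FPdim\Theta^{\ulambda}(St_{s-1})$ gives
\[
\FPdim\Theta^{\ulambda}(N)=\sum_{0\le a<p^s}\dim H_N(a)\,\FPdim\Theta^{\ulambda}(L(a)).
\]
Individual terms can be as small as $1$; for instance $a=0$, where $\Theta^{\ulambda}(L(0))=\unit$. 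A lower bound would therefore require controlling which $a$ occur for modules that are projective on all proper subgroups, and that is a new and unaddressed problem. Even your model case falls short of the inequality you want: the image of $St_{s-1}$ in $\Ver_{p^s}$ has $\FPdim=\sin(\pi/p)/\sin(\pi/p^s)<p^{s-1}$ for $s\ge2$, so ``$\FPdim(Q)\ge p^{s-1}$'' would not follow. Also, what you need is $s\le r$, not $s\le r-1$. The latter is the same condition as $s<r$, so there is no slack as you claim.

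The missing idea is to stop tracking how the functor was built and bound the level inside the target. Take any tensor functor $F\colon\cC\to\Ver_{p^s}$, which exists by \Cref{thm:BEO}, with $s$ minimal. Let $\cD$ be the smallest tensor subcategory containing its image. Every object of $\cD$ is a subquotient of some $F(X)$, and every $X$ is a quotient of $P^{\oplus k}$. Hence every simple object of $\cD$ is a constituent of $F(P)$, and $\cD$ has at most $\len F(P)\le\FPdim(P)<p^{r-1}$ simple objects. By minimality of $s$ and the classification of tensor subcategories in \cite[Corollary~4.61]{BEO}, $\cD$ is $\Ver_{p^s}$ or $\Ver_{p^s}^+$. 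Each of these has at least $p^{s-2}$ simple objects by \cite[Theorem~4.5]{BEO}. So $p^{s-2}<p^{r-1}$, i.e.\ $s\le r$. Note that this argument uses the whole projective generator $P$, in order to see every simple object of $\cD$, and not just the summand $Q$.
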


\begin{proof} By \Cref{thm:BEO}, there is a tensor functor $F\:\cC\to\Ver_{p^s}$, for some $s\ge1$. Pick the minimal such $s$. Let $\cD\subset\Ver_{p^s}$ be the minimal tensor category containing the essential image of $F$. Then any object in $\cD$ is a subquotient of an object in the essential image of $F$. In particular, any simple object is a subquotient of $F(P)$, so the number of simple objects in $\cD$ up to isomorphism is at most $\len(F(P))\le\FPdim(P)<p^{r-1}$.

By minimality of $s$ and by the classification of tensor subcategories of the higher Verlinde categories \cite[Corollary~4.61]{BEO}, $\cD$ must be $\Ver_{p^s}$ or $\Ver^+_{p^s}$. In both cases, the number of simple objects up to isomorphism is at least $p^{s-2}$ by \cite[Theorem~4.5]{BEO}. This means $s\le r$.
\end{proof}

The following proposition places significant restrictions on the types of non-abelian $\bk$-linear symmetric rigid monoidal categories that can exist.
\begin{proposition}
   Let $\cA$ be an essentially small $\bk$-linear additive idempotent complete rigid symmetric monoidal category such that $\End(\unit)$ is one-dimensional and all morphism spaces finite-dimensional. Assume that $\cA$ has a unique minimal non-zero tensor ideal such that the set of isomorphism classes of indecomposable objects whose identities lie in the ideal is non-empty and finite. Then there exists a faithful $\bk$-linear symmetric monoidal functor $\cA\to \Ver_{p^n}$ for some $n\in\mZ_{>0}$.
\end{proposition}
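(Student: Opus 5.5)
The plan is to pass to the abelian envelope and then apply \Cref{thm:BEO}. The assumption is the standard hypothesis of the Coulembier-type abelian envelope theorem. Let $\cA$ be additive, idempotent complete, rigid symmetric monoidal with $\End(\unit)=\bk$ and finite-dimensional morphism spaces. Suppose it has a unique minimal non-zero tensor ideal $\cK$, and only finitely many indecomposables (but at least one) have identities in $\cK$. Then $\cA$ admits a faithful $\bk$-linear symmetric monoidal functor into a \emph{finite} tensor category $\cC$, namely its abelian envelope. In that envelope the indecomposables with identity in $\cK$ become the indecomposable projectives. I would cite the relevant result from \cite{Cou-abelian} (or the later work on envelopes of categories with finitely many "projective-like" indecomposables; for instance \cite{CoNew}).

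Once such a faithful functor $\cA\to\cC$ with $\cC$ finite is in hand, \Cref{thm:BEO} gives a tensor functor $\cC\to\Ver_{p^n}$ for some $n$. This functor is faithful and exact, since tensor functors between tensor categories are faithful. The composite $\cA\to\cC\to\Ver_{p^n}$ is then faithful, $\bk$-linear and symmetric monoidal, as required.

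So the only real work is producing the envelope, and I expect this to be the main obstacle. The concrete construction I would try has three steps.
\begin{enumerate}
\item Let $\cP\subset\cA$ be the full subcategory of objects whose identity lies in $\cK$. This is a tensor ideal of objects, so it is closed under tensoring with $\cA$. By uniqueness and minimality of $\cK$, the ideal generated by any non-zero morphism contains $\cK$; in particular the identity of any non-zero $P\in\cP$ generates $\cK$.
\item Take $\cC$ to be the category of finitely presented modules over $\cP$, i.e.\ $\bk$-linear functors $\cP^{\op}\to\Vect$. Since $\cP$ has finitely many indecomposables and finite-dimensional Hom spaces, this is the module category of a finite-dimensional algebra.
\item Equip $\cC$ with Day convolution.
\end{enumerate}
Once these are set up, there are three things to verify. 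First, $\cC$ is rigid, which comes from rigidity of $\cA$ together with closure of $\cP$ under duals. Second, $\cP$ sits inside $\cC$ as the projectives. Third, the Yoneda-type functor $X\mapsto\Hom(-,X)|_{\cP}$ from $\cA$ to $\cC$ is faithful. Faithfulness is where uniqueness of the minimal ideal enters. If $f\ne0$ were killed, then $\id_P\otimes f=0$ for every $P\in\cP$ (by adjunction). Hence the ideal generated by $f$ would not contain $\cK$, which contradicts minimality.

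If a precise citation to an existing theorem covering these hypotheses is available, I would invoke it instead of redoing this construction.
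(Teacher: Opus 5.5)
Your proposal is correct and follows the paper's proof: the paper obtains a finite tensor category as the abelian envelope of $\cA$ and composes the faithful envelope functor with the tensor functor from \Cref{thm:BEO}. The one correction is the citation. The envelope theorem under exactly these hypotheses is \cite[Theorem~2.4.1(1)]{CEO3}, not \cite{Cou-abelian} or \cite{CoNew}, and you should rely on it rather than on your Day-convolution sketch, since the rigidity and monoidality claims in that sketch are the non-formal content of that theorem rather than routine checks.
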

\begin{proof}
    By \cite[Theorem~2.4.1(1)]{CEO3}, $\cA$ admits an abelian envelope which, by construction {\it loc. cit.}, is a finite tensor category. The envelope thus admits a tensor functor to $\Ver_{p^n}$ by Theorem~\ref{thm:BEO}.
\end{proof}

We explain how our results resolve other open conjectures or questions.

\begin{proposition}
Assume $p=2$.
\begin{enumerate}
\item \cite[Conjecture~B.5]{CSZ} is true.
    \item Every finite tensor category is geometrically reductive (in the sense of \cite{ComAlg,GR}).  
    \item \cite[Conjecture~9.3.5]{CEO2} is true.
    \item \cite[Conjecture~3.1.6(1)]{ComAlg} is true for finite tensor categories.
\end{enumerate}
If $p>2$, then all these results remain valid provided $\Ver_{p^\infty}$ is maximally nilpotent (meaning \cite[Conjecture~3.2.3]{ComAlg} is true).
\end{proposition}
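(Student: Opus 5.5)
The strategy is to reduce all four items to statements about $\Ver_{p^\infty}$ (or $\Ver_4^+$ when $p=2$) using \Cref{thm:BEO}. Each of the cited conjectures or properties is, in the references, either already proved for categories fibring over a suitable incompressible category, or reduced to a property that passes from a target category back to any category admitting a tensor functor to it. So the proof should be a sequence of citations, not new mathematics. First I will record the reduction. By \Cref{thm:BEO}, every finite tensor category $\cC$ admits a tensor functor $F\colon\cC\to\Ver_{p^n}\subset\Ver_{p^\infty}$, and this functor is unique up to isomorphism.

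For $p=2$ I will use that $\Ver_{2^\infty}$ is known to be maximally nilpotent. I expect this to be established in \cite{ComAlg}, or at least to follow from the explicit description of $\Ver_{2^n}$ in \cite{BE} as a tensor category of representations of a supergroup-like object over $\Ver_4^+$. This is exactly the hypothesis that is assumed in the case $p>2$, which explains the shape of the proposition. The items then go as follows.

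\begin{enumerate}
\item For \cite[Conjecture~B.5]{CSZ}, I expect it to be stated as: every finite tensor category fibres over $\Ver_{p^\infty}$, with appropriate uniqueness and compatibility. It then follows from \Cref{thm:BEO}, possibly combined with the results of \cite{CSZ} that deduce the remaining claims once fibre functors exist.
\item Geometric reductivity, in the sense of \cite{ComAlg,GR}, is shown there to be inherited by any tensor category with a tensor functor to a maximally nilpotent (geometrically reductive) category. I therefore apply that transfer result to $F$.
\item \cite[Conjecture~9.3.5]{CEO2} should be reduced in \cite{CEO2} itself to the existence of fibre functors to $\Ver_{p^\infty}$, or to geometric reductivity. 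I will invoke that reduction together with (2).
\item For \cite[Conjecture~3.1.6(1)]{ComAlg}, restricted to finite $\cC$, the same source shows that it holds for categories fibring over a maximally nilpotent incompressible category. \Cref{thm:BEO} supplies the fibring.
\end{enumerate}

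For $p>2$ the identical argument goes through word for word once maximal nilpotency of $\Ver_{p^\infty}$ is assumed.

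The main obstacle is bookkeeping rather than mathematics. For each cited conjecture I must check that the reductions available in the literature need only a tensor functor to some $\Ver_{p^n}$ on \emph{finite} categories. In particular, none of them may require the moderate-growth version of \Cref{conj:BEO}, and none may require the functor to be surjective onto $\Ver_{p^n}$. Where a reduction needs the image to be a specific subcategory, I will replace $\Ver_{p^n}$ by the tensor subcategory generated by the image of $F$. That subcategory is one of $\Ver_{p^s}$ or $\Ver_{p^s}^+$ by \cite[Corollary~4.61]{BEO}, and maximal nilpotency passes to tensor subcategories.
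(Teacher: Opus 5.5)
Your architecture matches the paper's: \Cref{thm:BEO} combined with maximal nilpotence (MN) of $\Ver_{p^\infty}$, fed into transfer results from the literature. Two inputs are misattributed, though. First, MN of $\Ver_{2^n}$ is \cite[Theorem~9.2.1]{CEO2}; in \cite{ComAlg}, MN for general $p$ is only Conjecture~3.2.3. Second, items (2)--(4) all come at once from \cite[Theorem~4.1.1]{GR} together with \Cref{thm:BEO}, so you do not need to guess at a separate reduction of (3) inside \cite{CEO2}.

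The genuine gap is item (1). \cite[Conjecture~B.5]{CSZ} is not the fibring statement itself. If it were, \Cref{thm:BEO} alone would prove it for every $p$, whereas the proposition makes (1) conditional on MN when $p>2$. Your argument for (1) never uses MN, so it cannot establish the conjecture as stated. What \cite[Appendix~B]{CSZ} supplies is a reduction: the conjecture holds once, for every finite $\cC$, the unique incompressible finite tensor category to which $\cC$ admits a surjective tensor functor is MN. By \Cref{thm:BEO} that category is a tensor subcategory of some $\Ver_{p^n}$, and MN passes to it from $\Ver_{p^n}$. Your final paragraph, where you replace $\Ver_{p^n}$ by the subcategory generated by the image of $F$, is exactly this step. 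So the ingredient is in your proposal, but it belongs in item (1) as the essential argument, with MN of $\Ver_{p^n}$ as its input, rather than as a bookkeeping contingency.
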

\begin{proof}
As explained in \cite[Appendix~B]{CSZ}, the conjecture in (1) follows if, for an arbitrary finite tensor category $\cC$, the unique incompressible finite tensor category that $\cC$ admits a surjective tensor functor to (which by Theorem~\ref{thm:BEO} is a tensor subcategory of some $\Ver_{p^n}$) is maximally nilpotent (MN). Since $\Ver_{p^n}$ is conjectured to be MN in \cite[Conjecture~3.2.3]{ComAlg}, and proved to be so for $p=2$ in \cite[Theorem~9.2.1]{CEO2}, the results follow.

Similarly, the other three results follow immediately from \cite[Theorem~4.1.1]{GR} and Theorem~\ref{thm:BEO}.
\end{proof}

The following answers \cite[Question~7.1.4(2,3)]{CEO2} for finite tensor categories. Recall from \cite{CEO2} that the \emph{symmetric growth dimension} $\sgd(X)$ of an object $X$ in a tensor category is defined as
$$
\sgd(X)
:= \limsup_{n\to\infty} \frac{\log(\len(\Sym^{\le n} X))}{\log(n)},
$$
where $\Sym^{\le n}$ is the sum of the symmetric powers up to degree $n$.

\begin{proposition}
Assume $p=2$, let $X$ be an object of a finite tensor category $\cC$, let $F\:\cC\to\Ver_{p^r}$ be a tensor functor, see Theorem~\ref{thm:BEO}. 
Then
$$
\sgd(X)=\sgd(F(X))=\dim\Hom(F(X),\unit)\le\FPdim(X) ,
$$
and $\limsup$ can be replaced by $\lim$ in the definition of $\sgd$. The same is true for all $p$ for which $\Ver_{p^\infty}$ is maximally nilpotent.
\end{proposition}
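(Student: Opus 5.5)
The plan is to transport the question along the tensor functor $F$ and then settle it inside $\Ver_{p^r}$, using maximal nilpotence there. The first step is the equality $\sgd(X)=\sgd(F(X))$. Since $F$ is exact and symmetric monoidal, $F(\Sym^{\le n}X)\cong\Sym^{\le n}F(X)$. Lengths can change under $F$, but only within fixed bounds. On one side, $\len(Y)\le\len(F(Y))$ for every $Y$, because $F$ is faithful and exact and so sends nonzero objects to nonzero objects. On the other side, $\len(F(Y))\le c\,\len(Y)$ with $c:=\max_L\len(F(L))$, where $L$ runs over the finitely many simples of $\cC$. Multiplicative constants disappear after taking $\log(\cdot)/\log(n)$ in the limit, so $\sgd(X)=\sgd(F(X))$. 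The same comparison shows that if the defining limit exists for $F(X)$, it exists for $X$. This reduces everything to an object $Y:=F(X)$ of $\Ver_{p^r}$.

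The second step is the claim $\sgd(Y)=\dim\Hom(Y,\unit)$ for $Y\in\Ver_{p^r}$ when $\Ver_{p^\infty}$ is maximally nilpotent. This is proved for $p=2$ in \cite[Theorem~9.2.1]{CEO2}. For this step I would use the structure results of \cite{CEO2} and \cite{ComAlg}. Maximal nilpotence states that the maximal ideal of $\Sym(Y)$ generated by the non-trivial part is locally nilpotent, so $\Sym(Y)$ has growth governed by the polynomial algebra $\Sym(Y_{\unit})$. The invariant part should be a finitely generated module over $\Sym$ of the ``trivial part'' of $Y^*$, with $\dim\Hom(Y,\unit)$ generators. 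I expect \cite{CEO2} already records, in its discussion of sgd around Question~7.1.4, that under MN the algebra $\Sym(Y)$ is finite over a polynomial subalgebra in $m=\dim\Hom(Y,\unit)$ variables. Granting that, $\len(\Sym^{\le n}Y)$ is a genuine Hilbert-type function of polynomial growth of exact degree $m$. This gives $\sgd(Y)=m$, with $\limsup$ actually a limit. The main obstacle is extracting this finiteness statement cleanly from the MN hypothesis. I would handle it by filtering $\Sym(Y)$ by powers of the nilpotent ideal, so that the associated graded is a finite module over the polynomial algebra on the $\unit$-isotypic part.

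The final step is the inequality $\dim\Hom(F(X),\unit)\le\FPdim(X)$. Here $\dim\Hom(F(X),\unit)\le[F(X):\unit]$, and $[F(X):\unit]\le\FPdim(F(X))$ because every simple object has $\FPdim\ge1$. Finally $\FPdim(F(X))=\FPdim(X)$ since tensor functors preserve Frobenius--Perron dimension, which completes the chain.
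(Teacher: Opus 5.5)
Your proposal follows essentially the paper's route: compare lengths along $F$ (the paper cites \cite[Remark~7.1.2]{CEO2} for exactly your two-sided bound), use maximal nilpotence in $\Ver_{p^r}$ to get $\sgd(F(X))=\dim\Hom(F(X),\unit)$ and genuine polynomial growth from a filtration of $\Sym F(X)$ (the paper cites \cite[Theorems~7.2.2 and~3.3.2]{CEO2} for $\gr\Sym F(X)\cong R\otimes\Sym(\unit^h)$ with $R$ finite), bound $\dim\Hom(F(X),\unit)$ by the multiplicity of $\unit$, and transfer the limit back to $X$ by the same sandwich. The one point to make precise is that $\Ver_{p^r}$ is not semisimple, so your ``non-trivial part'' should be the kernel $Z$ of the maximal trivial quotient $F(X)\to\unit^h$ with $h=\dim\Hom(F(X),\unit)$, and the filtration should be by powers of the ideal generated by $Z$; then $\Sym(\unit^h)=\Sym(F(X))/(Z)$ appears as the bottom layer of the associated graded, and you do not need the finiteness over a polynomial subalgebra that you were unsure about.
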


\begin{proof} As explained in \cite[Remark~7.1.2]{CEO2}, $\len(\cdot)$ can be replaced by $\len(F(\cdot))$ in the definition of $\sgd$. This implies $\sgd(X)=\sgd(F(X))$. The identity $\sgd(F(X))=\dim\Hom(F(X),\unit)$ holds by \cite[Theorem~7.2.2]{CEO2}, as $\Ver_{2^\infty}$ is maximally nilpotent by \cite{CEO2} (or $\Ver_{p^\infty}$ by the alternative assumption). Now $\dim\Hom(F(X),\unit)$ and $\FPdim(X)=\FPdim(F(X))$ can be compared with the multiplicity of the simple object $\unit$ in $F(X)$, which shows the asserted inequality.

Pick an exact sequence
$$
0\to Z\to F(X)\to \unit^h \to 0
$$
in $\Ver_{p^r}$, with $h=\dim\Hom(F(X),\unit)$. Let $\Sym$ denote the symmetric algebra of an object.
Using maximal nilpotence of the target of $F$ again, the image $R$ of $\Sym Z\to\Sym F(X)$ is finite by \cite[Theorem~7.2.2]{CEO2}, and we have an isomorphism $R\o \Sym(\unit^h)\cong\gr(\Sym F(X))$ by \cite[Theorem~3.3.2]{CEO2}, where the associated graded is taken with respect to the filtration $Z\subset F(X)$. Pick $d\ge0$ such that $R_i=0$ for all $i>d$, then for all $n\ge d$ 
$$
\len(\Sym^{\le n} F(X)) = \sum_{0\le i\le d} \len(R_i) \binom{n-i+h}{h} 
$$
is a polynomial in $n$ with positive leading coefficient.

Pick $K\ge1$ greater than the length of the image of any simple object under $F$. It follows that 
$$
\liminf_n \frac{\log(\len(\Sym^{\le n}X))}{\log(n)}
\ge \lim_n \frac{\log(K^{-1} \len(\Sym^{\le n} F(X)))}{\log(n)}
= \limsup_n \frac{\log(K^{-1} \len(\Sym^{\le n} F(X)))}{\log(n)},
$$
which is again $\sgd(F(X))=\sgd(X)$ by \cite[Remark~7.1.2]{CEO2}.
\end{proof}

We conclude the paper by pointing out how our results contribute towards the full conjecture of Benson, Etingof and Ostrik.

\begin{proposition}
    The following statements are equivalent:
    \begin{enumerate}
        \item Every finitely generated tensor category of moderate growth admits a tensor functor to $\Ver_{p^n}$ for some $n\in\mZ_{>0}$.
        \item Every finitely generated tensor category of moderate growth admits a tensor functor to a finite tensor category.
        \item Every incompressible finitely generated tensor category of moderate growth is a finite tensor category.
    \end{enumerate}
\end{proposition}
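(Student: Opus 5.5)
We prove the cycle (1)$\Rightarrow$(2)$\Rightarrow$(3)$\Rightarrow$(1).

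\emph{(1)$\Rightarrow$(2).} This is immediate, since each $\Ver_{p^n}$ is a finite tensor category: it has finitely many simples and enough projectives by \cite{BEO}.

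\emph{(2)$\Rightarrow$(3).} Let $\cC$ be incompressible, finitely generated and of moderate growth. By (2) there is a tensor functor $F\colon\cC\to\cD$ with $\cD$ finite. Replace $\cD$ by the tensor subcategory generated by the image of $F$. This subcategory is again finite: it is a tensor subcategory of a finite tensor category, hence has finitely many simples, and it has enough projectives, because tensor subcategories of finite tensor categories are finite (see \cite{EGNO}). Incompressibility means precisely that any tensor functor out of $\cC$ is an equivalence onto its image, i.e.\ a surjective tensor functor out of $\cC$ is an equivalence. Once we know $F$ is surjective onto this subcategory, $\cC$ is equivalent to a finite tensor category.

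The one point needing care here is that "incompressible" is defined in \cite{CEO2} through injective tensor functors, so one must check that the image subcategory really is the target of a surjective functor, to which the definition applies. This is standard, and I expect this bookkeeping to be the main (mild) obstacle.

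\emph{(3)$\Rightarrow$(1).} Let $\cC$ be finitely generated of moderate growth. By \cite{CEO2}, every such category admits a surjective tensor functor onto an incompressible tensor category $\cC'$, which is again finitely generated (generated by the image of the generator) and of moderate growth (since lengths can only drop along such functors). By (3), $\cC'$ is finite. By \Cref{thm:BEO}, $\cC'$ admits a tensor functor to some $\Ver_{p^n}$. Composing gives $\cC\to\Ver_{p^n}$.

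The key input for this direction is the existence of an incompressible quotient, cited from \cite{CEO2}; with that, \Cref{thm:BEO} does all the work. I would check the exact reference for the incompressible-quotient result in \cite{CEO2}, and verify that it preserves both finite generation and moderate growth.
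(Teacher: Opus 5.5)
Your argument follows the paper's route. (1)$\Rightarrow$(2)$\Rightarrow$(3) are formal, and (3)$\Rightarrow$(1) combines the incompressible quotient from \cite[Theorem~5.2.1]{CEO2} with \Cref{thm:BEO}.

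One correction. Your reason that the incompressible quotient has moderate growth, ``since lengths can only drop along such functors'', is backwards. A tensor functor $F$ is exact and faithful, so $\len(F(X))\ge\len(X)$, and lengths typically increase (a fibre functor is an example). So moderate growth of the quotient cannot be obtained this way. It has to come from the cited result \cite[Theorem~5.2.1]{CEO2} itself, which is exactly what the paper invokes. Finite generation of the quotient is fine as you argued.

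In (2)$\Rightarrow$(3) the point you flag as needing care is a non-issue. Incompressibility applied directly to $F\colon\cC\to\cD$ makes $\cC$ equivalent to a tensor subcategory of the finite category $\cD$, and such a subcategory is finite. No bookkeeping about surjectivity is needed.
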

\begin{proof}
    The implications (1)$\Rightarrow$(2)$\Rightarrow$(3) are obvious and do not require our results. The implication (3)$\Rightarrow$(1) follows immediately from \Cref{thm:BEO} and \cite[Theorem~5.2.1]{CEO2}.
\end{proof}

We point out that for \Cref{conj:BEO} itself to follow as a consequence of the above equivalent properties, one would for instance need to prove that the categories $\Ver_{p^n}$ satisfy the property MN, so that uniqueness of tensor functors, see \cite[Theorem~7.3.1]{CEO2}, would allow one to glue together tensor functors on finitely generated tensor subcategories.

\printbibliography

@misc{Sherman,
author       = {Sherman, Alexander},
  title        = {Semi-exact functors and the conjecture of Coulembier and Flake},
  year         = {2026},
  note         = {In preparation}
}

@book{BensonBook,
    AUTHOR = {Benson, David J.},
     TITLE = {Representations of elementary abelian {$p$}-groups and vector
              bundles},
    SERIES = {Cambridge Tracts in Mathematics},
    VOLUME = {208},
 PUBLISHER = {Cambridge University Press},
   ADDRESS = {Cambridge},
      YEAR = {2017},
      ISBN = {978-1-316-79569-9},
       DOI = {10.1017/9781316795699},
       URL = {https://doi.org/10.1017/9781316795699},
}

@article{BE,
    AUTHOR = {Benson, Dave and Etingof, Pavel},
     TITLE = {Symmetric tensor categories in characteristic 2},
   JOURNAL = {Adv. Math.},
  FJOURNAL = {Advances in Mathematics},
    VOLUME = {351},
      YEAR = {2019},
     PAGES = {967--999},
       DOI = {10.1016/j.aim.2019.05.020},
       URL = {https://doi.org/10.1016/j.aim.2019.05.020},
}

@article{BEO,
    AUTHOR = {Benson, Dave and Etingof, Pavel and Ostrik, Victor},
     TITLE = {New incompressible symmetric tensor categories in positive
              characteristic},
   JOURNAL = {Duke Math. J.},
  FJOURNAL = {Duke Mathematical Journal},
    VOLUME = {172},
      YEAR = {2023},
    NUMBER = {1},
     PAGES = {105--200},
      ISSN = {0012-7094,1547-7398},
   MRCLASS = {18M05},
  MRNUMBER = {4533718},
MRREVIEWER = {Xiao-Kan\ Guo},
       DOI = {10.1215/00127094-2022-0030},
       URL = {https://doi.org/10.1215/00127094-2022-0030},
}

@article{CoNew,
    AUTHOR = {Coulembier, Kevin},
     TITLE = {Inductive systems of the symmetric group, polynomial functors
              and tensor categories},
   JOURNAL = {Ann. Sci. \'{E}c. Norm. Sup\'{e}r. (4)},
  FJOURNAL = {Annales Scientifiques de l'\'{E}cole Normale Sup\'{e}rieure.
              Quatri\`{e}me S\'{e}rie},
    VOLUME = {59},
      YEAR = {2026},
    NUMBER = {4},
     PAGES = {957--1010},
       DOI = {10.24033/asens.2652},
       URL = {https://doi.org/10.24033/asens.2652},
%    EPRINT = {2406.00892},
%ARCHIVEPREFIX = {arXiv},
%PRIMARYCLASS = {math.RT},
}

@article {ComAlg,
    AUTHOR = {Coulembier, Kevin},
     TITLE = {Commutative algebra in tensor categories},
   JOURNAL = {Transform. Groups},
  FJOURNAL = {Transformation Groups},
    VOLUME = {31},
      YEAR = {2026},
    NUMBER = {2},
     PAGES = {1225--1272},
      ISSN = {1083-4362,1531-586X},
   MRCLASS = {18M05 (13B30 14A05 14L17)},
  MRNUMBER = {5057629},
       DOI = {10.1007/s00031-026-09965-z},
       URL = {https://doi.org/10.1007/s00031-026-09965-z},
}

@article{CEO,
    AUTHOR = {Coulembier, Kevin and Etingof, Pavel and Ostrik, Victor},
     TITLE = {On {F}robenius exact symmetric tensor categories},
      NOTE = {With Appendix A by Alexander Kleshchev},
   JOURNAL = {Ann. of Math. (2)},
  FJOURNAL = {Annals of Mathematics. Second Series},
    VOLUME = {197},
      YEAR = {2023},
    NUMBER = {3},
     PAGES = {1235--1279},
      ISSN = {0003-486X,1939-8980},
   MRCLASS = {18M05 (14L15 16T05 20C20)},
  MRNUMBER = {4564264},
MRREVIEWER = {Mee\ Seong\ Im},
       DOI = {10.4007/annals.2023.197.3.5},
       URL = {https://doi.org/10.4007/annals.2023.197.3.5},
}

@article{CEO2,
    AUTHOR = {Coulembier, Kevin and Etingof, Pavel and Ostrik, Victor},
     TITLE = {Incompressible tensor categories},
   JOURNAL = {Adv. Math.},
  FJOURNAL = {Advances in Mathematics},
    VOLUME = {457},
      YEAR = {2024},
     PAGES = {Paper No. 109935, 65},
      ISSN = {0001-8708,1090-2082},
   MRCLASS = {18M05 (14 16G10 18M25 20)},
  MRNUMBER = {4794815},
       DOI = {10.1016/j.aim.2024.109935},
       URL = {https://doi.org/10.1016/j.aim.2024.109935},
}

@article{CF1,
    AUTHOR = {Coulembier, Kevin and Flake, Johannes},
     TITLE = {Towards higher {F}robenius functors for symmetric tensor
              categories},
   JOURNAL = {Selecta Math. (N.S.)},
  FJOURNAL = {Selecta Mathematica. New Series},
    VOLUME = {32},
      YEAR = {2026},
     PAGES = {Paper No. 98},
       DOI = {10.1007/s00029-026-01204-3},
       URL = {https://doi.org/10.1007/s00029-026-01204-3},
%    EPRINT = {2405.19506},
%ARCHIVEPREFIX = {arXiv},
%PRIMARYCLASS = {math.RT},
}

@misc{CF2,
    AUTHOR = {Coulembier, Kevin and Flake, Johannes},
     TITLE = {A conjecture on the tensor ideal for an elementary {$p$}-group
              generated by the restriction of a {S}teinberg module},
      YEAR = {2025},
      %NOTE = {Preprint, arXiv:2507.02722},
    EPRINT = {2507.02722},
ARCHIVEPREFIX = {arXiv},
%PRIMARYCLASS = {math.RT},
       DOI = {10.48550/arXiv.2507.02722},
       URL = {https://arxiv.org/abs/2507.02722},
}

@misc{CEO3,
    AUTHOR = {Coulembier, Kevin and Etingof, Pavel and Ostrik, Victor},
     TITLE = {Tensor ideals of abelian type and quantum groups},
      YEAR = {2025},
      NOTE = {Preprint, arXiv:2511.08859},
    EPRINT = {2511.08859},
ARCHIVEPREFIX = {arXiv},
PRIMARYCLASS = {math.RT},
       URL = {https://arxiv.org/abs/2511.08859},
}

@incollection{Del90,
    AUTHOR = {Deligne, P.},
     TITLE = {Cat\'{e}gories tannakiennes},
 BOOKTITLE = {The {G}rothendieck {F}estschrift, {V}ol. {II}},
    SERIES = {Progr. Math.},
    VOLUME = {87},
     PAGES = {111--195},
 PUBLISHER = {Birkh\"{a}user Boston, Boston, MA},
      YEAR = {1990},
      ISBN = {0-8176-3428-2},
   MRCLASS = {14A99 (12H05 18A99)},
  MRNUMBER = {1106898},
MRREVIEWER = {James\ Milne},
}

@article{Del02,
    AUTHOR = {Deligne, P.},
     TITLE = {Cat\'{e}gories tensorielles},
      NOTE = {Dedicated to Yuri I. Manin on the occasion of his 65th
              birthday},
   JOURNAL = {Mosc. Math. J.},
  FJOURNAL = {Moscow Mathematical Journal},
    VOLUME = {2},
      YEAR = {2002},
    NUMBER = {2},
     PAGES = {227--248},
      ISSN = {1609-3321,1609-4514},
   MRCLASS = {18D10 (18E30 20C99)},
  MRNUMBER = {1944506},
MRREVIEWER = {Viktor\ Ostrik},
       DOI = {10.17323/1609-4514-2002-2-2-227-248},
       URL = {https://doi.org/10.17323/1609-4514-2002-2-2-227-248},
}

@article {tann,
    AUTHOR = {Coulembier, Kevin},
     TITLE = {Tannakian categories in positive characteristic},
   JOURNAL = {Duke Math. J.},
  FJOURNAL = {Duke Mathematical Journal},
    VOLUME = {169},
      YEAR = {2020},
    NUMBER = {16},
     PAGES = {3167--3219},
      ISSN = {0012-7094,1547-7398},
   MRCLASS = {18M15 (14L15 16D90 16T05 20C05)},
  MRNUMBER = {4167087},
MRREVIEWER = {Mee\ Seong\ Im},
       DOI = {10.1215/00127094-2020-0026},
       URL = {https://doi.org/10.1215/00127094-2020-0026},
}

@book{EGNO,
    AUTHOR = {Etingof, Pavel and Gelaki, Shlomo and Nikshych, Dmitri and
              Ostrik, Victor},
     TITLE = {Tensor categories},
    SERIES = {Mathematical Surveys and Monographs},
    VOLUME = {205},
 PUBLISHER = {American Mathematical Society, Providence, RI},
      YEAR = {2015},
     PAGES = {xvi+343},
      ISBN = {978-1-4704-2024-6},
       DOI = {10.1090/surv/205},
       URL = {https://doi.org/10.1090/surv/205},
}

@article{EOf,
    AUTHOR = {Etingof, Pavel and Ostrik, Victor},
     TITLE = {On the {F}robenius functor for symmetric tensor categories in
              positive characteristic},
   JOURNAL = {J. Reine Angew. Math.},
  FJOURNAL = {Journal f\"{u}r die Reine und Angewandte Mathematik},
    VOLUME = {773},
      YEAR = {2021},
     PAGES = {165--198},
       DOI = {10.1515/crelle-2020-0033},
       URL = {https://doi.org/10.1515/crelle-2020-0033},
}

@article{Os,
    AUTHOR = {Ostrik, Victor},
     TITLE = {On symmetric fusion categories in positive characteristic},
   JOURNAL = {Selecta Math. (N.S.)},
  FJOURNAL = {Selecta Mathematica. New Series},
    VOLUME = {26},
      YEAR = {2020},
    NUMBER = {3},
     PAGES = {Paper No. 36, 19},
      ISSN = {1022-1824},
       DOI = {10.1007/s00029-020-00567-5},
       URL = {https://doi.org/10.1007/s00029-020-00567-5},
}

@article{Cou-abelian,
  author = {Coulembier, Kevin},
  title = {Monoidal abelian envelopes},
  journal = {Compos. Math.},
  fjournal = {Compositio Mathematica},
  volume = {157},
  year = {2021},
  number = {7},
  pages = {1584--1609},
  issn = {0010-437X,1570-5846},
  mrclass = {18D15 (18F10 18F20 18M05 20G05)},
  mrnumber = {4277111},
  mrreviewer = {Johannes\ Flake},
  doi = {10.1112/S0010437X21007399},
  url = {https://doi.org/10.1112/S0010437X21007399},
}

@misc{CSZ,
      title={Simple algebras and exact module categories}, 
      author={Kevin Coulembier and Mateusz Stroiński and Tony Zorman},
      year={2025},
      eprint={2501.06629},
      archivePrefix={arXiv},
      primaryClass={math.RT},
      url={https://arxiv.org/abs/2501.06629}, 
}

@misc{GR,
      title={On geometrically reductive tensor categories}, 
      author={Kevin Coulembier},
      year={2026},
      eprint={2605.19167},
      archivePrefix={arXiv},
      primaryClass={math.RT},
      url={https://arxiv.org/abs/2605.19167}, 
}

\end{document}